\numberwithin{equation}{section}
\def\proof{\smallskip\noindent {\it Proof: \ }}
\def\endproof{\hfill$\square$\medskip}
\newtheorem{theorem}{Theorem}[section]
\newtheorem{proposition}[theorem]{Proposition}
\newtheorem{corollary}[theorem]{Corollary}
\newtheorem{conjecture}[theorem]{Conjecture}
\newtheorem{lemma}[theorem]{Lemma}
\theoremstyle{definition}
\newtheorem{example}[theorem]{Example}
\newtheorem{remark}[theorem]{Remark}
\DeclareMathOperator{\skel}{Skel}
\DeclareMathOperator\lk{\mathrm{lk}}
\DeclareMathOperator\st{\mathrm{st}}
\DeclareMathOperator{\soc}{\mathrm{Soc}}
\DeclareMathOperator{\tor}{\mathrm{Tor}}
\DeclareMathOperator{\conv}{\mathrm{conv}}
\DeclareMathOperator{\supp}{\mathrm{supp}}
\newcommand{\m}{{\mathbf m}}
\newcommand{\R}{{\mathbb R}}
\newcommand{\RR}{{\mathcal R}}
\newcommand{\Q}{{\mathbb Q}}
\newcommand{\Z}{{\mathbb Z}}
\newcommand{\Stress}{\mathcal S}
\title{Affine stresses, inverse systems, and reconstruction problems}
\author{Satoshi Murai\thanks{Research of SM is partially\textsl{} supported by KAKENHI 21K03190.}\\
\small Department of Mathematics, Faculty of Education\\[-0.8ex] 
\small Waseda University\\[-0.8ex]  
\small 1-6-1 Nishi-Waseda, Shinjuku, Tokyo 169-8050, Japan\\[-0.8ex]
\small \texttt{s-murai@waseda.jp}
\and
	Isabella Novik\thanks{Research of IN is partially\textsl{} supported by NSF grants DMS-1953815 and DMS-2246399. }\\
	\small Department of Mathematics\\[-0.8ex]
	\small University of Washington\\[-0.8ex]
	\small Seattle, WA 98195-4350, USA\\[-0.8ex]
	\small \texttt{novik@uw.edu}
	\and 
	Hailun Zheng\thanks{Research of HZ is partially\textsl{} supported by NSF grant DMS-2246793.} \\
	\small Department of Mathematics \& Statistics\\[-0.8ex]
	\small University of Houston-Downtown\\[-0.8ex]
	\small One Main Street, Houston, TX 77002, USA \\[-0.8ex]
	\small \texttt{zhengh@uhd.edu}
}
\begin{document}
	\maketitle
		\begin{abstract}
		A conjecture of Kalai asserts that for $d\geq 4$, the affine type of a prime simplicial $d$-polytope $P$ can be reconstructed from the space of affine $2$-stresses of $P$. We prove this conjecture for all $d\geq 5$. We also prove the following generalization: for all pairs $(i,d)$ with $2\leq i\leq  \lceil \frac d 2\rceil-1$, the affine type of a simplicial $d$-polytope $P$ that has no missing faces of dimension $\geq d-i+1$ can be reconstructed from the space of affine $i$-stresses of $P$.  A consequence of our proofs is a strengthening of the Generalized Lower Bound Theorem: it was proved by Nagel that for any simplicial $(d-1)$-sphere $\Delta$ and $1\leq k\leq \lceil\frac{d}{2}\rceil-1$, $g_k(\Delta)$ is at least as large as the number of missing $(d-k)$-faces of $\Delta$; here we show that, for $1\leq k\leq \lfloor\frac{d}{2}\rfloor-1$, equality holds if and only if $\Delta$ is $k$-stacked.
		Finally, we show that for $d\geq 4$, any simplicial $d$-polytope $P$ that has no missing faces of dimension $\geq d-1$ is redundantly rigid, that is, for each edge $e$ of $P$, there exists an affine $2$-stress on $P$ with a non-zero value on $e$.
	\end{abstract}
	
	\section{Introduction} 
	This paper is focused on the following question: what partial information about a simplicial polytope $P$ allows one to reconstruct the affine type of $P$? We also use this paper as an opportunity to strengthen connections between geometric/combinatorial theory of (higher) affine stresses on one hand (as developed by Lee \cite{Lee96} and Tay, White, and Whiteley \cite{Tay-et-al-I,Tay-et-al}) and Macaulay inverse systems \cite[Chapter 21]{Eisenbud}, along with other powerful and well-established results and techniques from commutative algebra, on the other. We defer all definitions until the following sections.
	
	Let $P\subset \R^d$ be a simplicial $d$-polytope with $n$ vertices, which we label by $1,2,\ldots,n$, and let $\partial P$ be the boundary complex of $P$ considered as an abstract simplicial complex. By using translation, if necessary, we always assume that the origin is in the interior of $P$. Let $p(1),\ldots,p(n)\in \R^d$ be the position vectors of vertices of $P$, and let $\RR(P)$ be the associated $(d+1)\times n$ matrix whose $i$th column is the vector $p(i)$ augmented by a one in the last position. A basic observation at the heart of the theory of Gale diagrams (see \cite[Chapter 6]{Ziegler}) asserts that the row space of $\RR(P)$ (equivalently, the null space of $\RR(P)$) determines $P$ up to affine equivalence. Put differently, the space of affine dependencies of vertices of $P$ allows one to reconstruct the affine type of $P$. The fact that the space of affine  dependencies of vertices of $P$ is the space of affine $1$-stresses of $P$ motivated Kalai to propose the following bold conjecture: 
	
	\begin{conjecture} \label{conj:Kalai}
	If $d\geq 4$ and $P$ is a simplicial $d$-polytope that has no missing faces of dimension $d-1$, then one can reconstruct the affine type of $P$ from the graph of $P$ and the space of affine $2$-stresses of $P$.
	\end{conjecture}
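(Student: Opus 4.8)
The plan is to reduce the conjecture (for $d\ge 5$) to a single socle–vanishing statement for an Artinian algebra and then to prove that vanishing via Macaulay's inverse systems together with the Hard Lefschetz property of $\partial P$; the case $d=4$, which falls outside the range of the inverse–system argument, I would leave aside. As recalled above, the affine type of $P$ is determined by the row space of $\RR(P)$, equivalently by its orthogonal complement inside $\field^n$, which is exactly the space $\mathcal S_1(P)$ of affine $1$-stresses (the affine dependences among the vertices of $P$). So it suffices to reconstruct $\mathcal S_1(P)\subseteq\field^n$ from the graph $G=G(P)$ and the space $\mathcal S_2(P)$ of affine $2$-stresses. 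For this I would exploit the $R$-module structure on $\bigoplus_{i\ge 0}\mathcal S_i(P)$, where $R=\field[x_1,\dots,x_n]$ acts by ``contraction'' lowering degree by one: for a vertex $v$ and $t\in\mathcal S_2(P)$, let $x_v\circ t$ be the vector with entries $(x_v\circ t)_w=t(\{v,w\})$ for $w$ a neighbour of $v$, $(x_v\circ t)_v=-\sum_{w\sim v}t(\{v,w\})$, and $0$ elsewhere; the defining equilibrium condition of a self-stress says precisely that $x_v\circ t\in\mathcal S_1(P)$. Since $x_v\circ t$ is computed from $G$ and $t$ alone, the subspace $R_1\circ\mathcal S_2(P):=\Span\{x_v\circ t:v\in V,\ t\in\mathcal S_2(P)\}\subseteq\mathcal S_1(P)$ is reconstructible from the given data, so the conjecture for $P$ is equivalent to the equality $\mathcal S_1(P)=R_1\circ\mathcal S_2(P)$.

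Next I would pass to commutative algebra. Let $\theta_1,\dots,\theta_d$ be the linear forms read off from the coordinate rows of $\RR(P)$ and $\theta_0=\sum_v x_v$; since the origin lies in the interior of $P$, the forms $\theta_1,\dots,\theta_d$ form a linear system of parameters for the Stanley--Reisner ring $\field[\partial P]$, and we put $A=A(\partial P)=\field[\partial P]/(\theta_0,\theta_1,\dots,\theta_d)$, so that $\dim_\field A_1=n-d-1=\dim_\field\mathcal S_1(P)$. By the theory of affine stresses (Lee; McMullen's polytope algebra), $\bigoplus_i\mathcal S_i(P)$ is the Macaulay inverse system of $A$; in particular $\mathcal S_i(P)\cong A_i^{*}$, and the pairing identifies the contraction map $R_1\otimes\mathcal S_2(P)\to\mathcal S_1(P)$ with the transpose of the multiplication map $A_1\to\mathrm{Hom}(R_1,A_2)$, $a\mapsto(x_v\mapsto \bar x_v a)$. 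The kernel of the latter is $\{a\in A_1:\mathfrak{m}a=0\}=\soc(A)_1$. Therefore
\[
\mathcal S_1(P)=R_1\circ\mathcal S_2(P)\qquad\Longleftrightarrow\qquad \soc(A)_1=0,
\]
and the whole conjecture reduces to proving $\soc(A)_1=0$ under the hypotheses $d\ge 5$ and ``$\partial P$ has no missing face of dimension $d-1$'' (i.e.\ $P$ prime).

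The main obstacle is this socle computation. Under Macaulay duality, $\dim_\field\soc(A)_1$ is the number of degree-$1$ minimal generators of the inverse system $\bigoplus_i\mathcal S_i(P)$, i.e.\ the dimension of the space of affine $1$-stresses that are not contractions of affine $2$-stresses; the plan is to show that any such ``extra'' $1$-stress must be supported on the vertices of a missing $(d-1)$-face, so that primeness of $P$ leaves no room for it. Concretely, I would lift a putative nonzero $a\in\soc(A)_1$ to the Artinian Gorenstein ring $B=\field[\partial P]/(\theta_1,\dots,\theta_d)$ (socle degree $d$), where the condition $\mathfrak{m}a=0$ in $A$ becomes $\mathfrak{m}\,\tilde a\subseteq\theta_0 B$. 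Using the Poincar\'e-duality pairing of $B$ and the Hard Lefschetz property of $\partial P$ (so that multiplication by $\theta_0$ is as injective/surjective as the $g$-vector permits), I expect to convert this into a constraint forcing $\tilde a$ to be concentrated on a subcomplex of the form $\partial F\ast\lk_{\partial P}(F)$ with $F$ a missing face of $\partial P$; a dimension count on the link — this is where $d\ge 5$ is needed, to keep the relevant link high-dimensional enough to run the Lefschetz argument — then rules out $\dim F<d-1$, and the hypothesis rules out $\dim F=d-1$, so $a=0$. I expect the same computation, carried out for $\{a\in A_1:\mathfrak{m}^{\,i-1}a=0\}$ and missing faces of dimension $\ge d-i+1$, to yield the stated generalization to $i$-stresses for $2\le i\le\lceil d/2\rceil-1$, and, after tracking the equality cases, the advertised sharpening of the Generalized Lower Bound Theorem.
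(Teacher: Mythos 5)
Your reduction of the conjecture to a socle‐vanishing statement is exactly the paper's first move: identifying the graded stress module with the Matlis dual $(R/J)^\vee$ of the Artinian reduction $A=R/J$ with $J=I_{\partial P}+(\Theta,\ell)$, observing that reconstructing $\Stress_1$ from $\Stress_2$ by contraction is equivalent to $J^\perp$ having no minimal generator in degree $-1$, and translating that via $\mu_{-k}(J^\perp)=r_k(R/J)$ into $\soc(A)_1=0$. The paper's Theorem~\ref{thm: Stress_i to Stress_j} is proved by precisely this dualization, and you have it right, including the observation that $d=4$ falls outside the reach of the Lefschetz/inverse-system argument.

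Where the proposal has a genuine gap is in the socle computation itself. Your sketch envisions lifting a putative socle element $a\in A_1$ to $\tilde a\in B_1$ (with $B=R/(I_{\partial P}+\Theta)$), using the Hard Lefschetz property and the Poincar\'e pairing on $B$ to ``force $\tilde a$ to be concentrated on a subcomplex of the form $\partial F * \lk_{\partial P}(F)$'' and then ruling out small $F$ by a dimension count on the link. No version of this support-localization step appears in the paper, and it is not clear how to make it work: a degree-$1$ element of $A$ is a class of a linear form modulo $\Theta,\ell$ and the Stanley--Reisner relations, and there is no reason it should be representable by a polynomial supported on a join of the shape you describe, nor does the Lefschetz property give such control pointwise. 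What the paper actually proves (Proposition~\ref{prop:socle and missing faces}) is the sharp numerical identity $\dim_\R \soc(A)_k=m_{d-k}(\Delta)$ for all $k<\lceil d/2\rceil-1$, and it obtains this not by geometric support arguments but by a chain of graded Betti number identities: $\dim_\R\soc(A)_k$ equals the top Betti number $\beta^{R/(\Theta,\ell)}_{n-d-1,\,n-d-1+k}(A)$ (Lemma~\ref{lem:basic-prop-of-Betti}(2)); by Migliore--Nagel's comparison theorem (Lemma~\ref{lem:MN}), which uses exactly the injectivity of $\cdot\ell$ in degrees $\le\lceil d/2\rceil-1$ granted by Theorem~\ref{alg-g-thm}, this equals $\beta^{R/(\Theta)}_{n-d-1,\,n-d-1+k}\bigl(R/(I_{\partial P}+\Theta)\bigr)$; passing through the regular sequence $\Theta$ (Lemma~\ref{lem:nonzerodiv}) this equals $\beta^R_{n-d-1,\,n-d-1+k}(R/I_{\partial P})$; and by the self-duality of the minimal free resolution of the Gorenstein ring $R/I_{\partial P}$ (Lemma~\ref{lem:symmetry}) this is $\beta^R_{1,d+1-k}(R/I_{\partial P})=m_{d-k}(\Delta)$. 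The two ingredients you are missing are the Migliore--Nagel partial-non-zero-divisor Betti number comparison and the Gorenstein symmetry of the Betti table of $R/I_{\partial P}$; without them your lifting idea does not close, and you would need to supply a new mechanism. Also note that the paper establishes the cleaner and stronger Theorem~\ref{thm:edges in stresses} to remove the graph-of-$P$ hypothesis entirely from the $i=2$ statement, something your sketch does not address.
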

	\noindent Conjecture \ref{conj:Kalai} was privately communicated to us and recorded in \cite{NZ-reconstruction}; for a related conjecture, see \cite[Conjecture 7]{Kalai-survey}.
	
	Before proceeding, let us restate the above discussion in the language of commutative algebra. Consider a polynomial ring $R=\R[x_1,\dots, x_n]$, let $I_P\subset R$ be the Stanley--Reisner ideal of $\partial P$, let $\Theta_P=\theta_1,\dots,\theta_d$ be the sequence of linear forms determined by the coordinates of vertices of $P$, and let $\ell=x_1+ \cdots +x_n$. (That is, $\theta_1,\ldots,\theta_d,\ell$ correspond to the rows of $\RR(P)$.) Denote by $M_i$ the degree $i$ part of a graded $R$-module $M$. Then $(\Theta_P,\ell)_1=\big(I_P+(\Theta_P,\ell)\big)_1$ determines $P$ up to affine equivalence. Furthermore, Conjecture \ref{conj:Kalai} posits that if $d\geq 4$ and $P$ is a simplicial $d$-polytope with $n$ vertices such that $I_P$ has no generators of degree $d$,  then one can reconstruct $(\Theta_P,\ell)_1$ from $(I_P)_2$ and $\big(I_P+(\Theta_P,\ell)\big)_2$. The reason Conjecture \ref{conj:Kalai} is equivalent to this algebraic statement is that the space of affine $i$-stresses of $P$ coincides with a certain graded part of the Macaulay inverse system of $I_P+(\Theta_P,\ell)$.
	
	Conjecture \ref{conj:Kalai}  was recently proved by Cruickshank, Jackson and Tanigawa \cite{CJT} in the case that $P$ is a simplicial polytope (or a simplicial sphere) whose vertices have ``generic'' coordinates, and by Novik and Zheng \cite{NZ-AffReconstr} for all simplicial $d$-polytopes that have no missing faces of dimension $\geq d-2$. In addition, Novik and Zheng generalized Conjecture \ref{conj:Kalai} to higher stresses: 
	
	\begin{conjecture} \label{Old Conjecture 1.2}
	Let $d\geq 4$, let $2\leq i\leq d/2$, and let $P$ be a simplicial $d$-polytope. If $P$ has no missing faces of dimension $\geq d-i+1$, then the affine type of $P$ can be reconstructed from the space of affine $i$-stresses of $P$. Equivalently, if $I_P$ has no generators of degree $\geq d-i+2$, then $(\Theta_P,\ell)_1$ is determined by $\big(I_P+(\Theta_P,\ell)\big)_i$.
	\end{conjecture}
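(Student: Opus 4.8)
The plan is to translate into the commutative algebra of the Introduction and to reduce the conjecture to a single vanishing statement about the Artinian algebra $B:=\field[\partial P]/(\Theta_P,\ell)=R/J$, where $J:=I_P+(\Theta_P,\ell)$. Recall that $B$ has Hilbert function the $g$-vector $(g_0,\dots,g_{\lfloor d/2\rfloor})$ and that the affine $i$-stress space, which we write $\Stress_i(P)=(J^\perp)_i$ (here $J^\perp$ is the Macaulay inverse system of $J$), is a $g_i$-dimensional subspace of the degree-$i$ part of that inverse system, supported on the faces of $\partial P$. The argument has two stages. First, one recovers the combinatorial type of $\partial P$ from $\Stress_i(P)$: for $i=2$ this is the redundant rigidity of $P$ (every edge carries a $2$-stress) established in this paper, and for $i\ge 3$ one expects the analogous statement --- that every face of dimension $\le i-1$ carries an $i$-stress --- to follow from the hypothesis on missing faces. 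So $I_P$ may be taken as known, and it remains to reconstruct the $(d{+}1)$-dimensional subspace $V:=(\Theta_P,\ell)_1=J_1\subset R_1$, which is equivalent to recovering the affine type of $P$.

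For the second stage, observe that from $\Stress_i(P)$ together with its ambient coordinate system one can test, for each $f\in R_1$, whether $f\cdot\Stress_i(P)=0$ under the contraction action $R_1\times(J^\perp)_i\to(J^\perp)_{i-1}$, and that $f\cdot\Stress_i(P)=0$ if and only if $fR_{i-1}\subseteq J_i$, i.e.\ if and only if the image of $f$ in $B_1$ annihilates $B_{i-1}$. Since $J_1\subseteq\{f:fR_{i-1}\subseteq J_i\}$ always, one recovers $V=J_1$ exactly when $(0:_B B_{i-1})_1=0$; and if $0\ne\bar f\in(0:_B B_{i-1})_1$ then the cyclic submodule $\bar fB\subseteq B$ has top degree at most $i-1$ (as $\bar fB_{i-1}=0$ forces $\bar fB_j=0$ for all $j\ge i-1$), so $\soc(B)_j\ne 0$ for some $1\le j\le i-1$. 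Thus the conjecture reduces to the purely algebraic statement: \emph{if the simplicial $d$-polytope $P$ has no missing faces of dimension $\ge d-i+1$ and $2\le i\le d/2$, then $\soc(B)_j=0$ for every $1\le j\le i-1$} --- equivalently, the Lefschetz kernel $(0:_A\ell)$ of the Gorenstein Artinian algebra $A:=\field[\partial P]/\Theta_P$ (socle degree $d$; note $(0:_A\ell)\cong\omega_B$ up to shift) has no minimal generators in degrees $\ge d-i+1$. The hypothesis also forces $g_j>0$ for $1\le j\le i$: were $g_j=0$, the polytope $P$ would be $(j{-}1)$-stacked (the equality case of the Generalized Lower Bound Theorem for polytopes), hence --- $P$ not being a simplex, a case in which the reconstruction is trivial --- would carry a missing face of dimension $\ge d-j+1\ge d-i+1$, contradicting the hypothesis; in particular $\ell A_{i-1}\subsetneq A_i$.

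I would prove the reduced statement by induction on $i$, descending along vertex figures (which drop both $d$ and $i$ by one). The missing-face hypothesis is inherited by each $P_v$: a missing face $\tau$ of $\lk_{\partial P}(v)$ of dimension $\ge d-i+1$ is either a missing face of $\partial P$ of that dimension (if $\tau\notin\partial P$) or, together with $v$, one of dimension $\dim\tau+1$ (if $\tau\in\partial P$), both excluded, so $P_v$ has no missing faces of dimension $\ge(d-1)-(i-1)+1$. The base case $i=1$ is the Gale-duality fact recalled in the Introduction (affine $1$-stresses reconstruct the affine type). For the inductive step, the hypothesis for $P$ is stronger than that for parameter $i-1$, so the inductive statement for $i-1$ already yields $\soc(B)_j=0$ for $1\le j\le i-2$ --- equivalently $(0:_A\ell)$ has no minimal generators in degrees $\ge d-i+2$ --- and it remains only to exclude minimal generators in the single degree $d-i+1$, i.e.\ to show $\soc(B)_{i-1}=0$. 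When $d>2i$ this last step has room: one combines the hard Lefschetz property of $A$ with the link/antistar behaviour of the inverse system (as in the stress-theoretic proofs of the lower bound theorems, feeding in the inductive statement for the vertex figures) to conclude.

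The hard part --- indeed the reason the conjecture is currently open in this range --- will be the extreme case $d=2i$ (equivalently $i=d/2$, $d$ even), which already contains Conjecture~\ref{conj:Kalai} in dimension $d=4$. Here the socle degree of $A$ equals exactly twice $i$, so $\Stress_i(P)$ is the top-degree component of $B$, the previous "room" disappears, and the required statement $\soc(B)_{i-1}=0$ is precisely the assertion that $B=\field[\partial P]/(\Theta_P,\ell)$ is a \emph{level} algebra. This does not follow from any generic level-ness result, since killing one further generic linear form destroys level-ness in general; and the link-descent only reaches the easier regime $d>2i$ after one more step (the vertex figures land in dimension $d-1$ with parameter $\lfloor(d-1)/2\rfloor$, still extremal), so the inductive load is heaviest exactly here. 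A plausible route is to combine the equality case of the Generalized Lower Bound Theorem with a direct analysis of the last strand of the minimal free $R$-resolution of $B$ --- equivalently, of the minimal generators of $\omega_B\cong(J^\perp)$ --- showing that the bound $d-i+1$ on the generator degrees of $I_P$ propagates through the very non-generic Artinian reduction by $\Theta_P$ and $\ell$ to the statement that $\omega_B$ is generated in a single degree.
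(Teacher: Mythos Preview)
Your reduction to the socle-vanishing statement $\soc^R(B)_j=0$ for $1\le j\le i-1$ (where $B=R/J$, $J=I_{\partial P}+(\Theta_P,\ell)$) is correct and is exactly what the paper does: in the paper's language one shows $\mu_{-k}(J^\perp)=r_k(R/J)=0$ for $k<i$, which yields the stronger conclusion $\Stress_k(P)=\{\partial^{\,i-k}\lambda:\lambda\in\Stress_i(P)\}$ (Theorem~\ref{thm: Stress_i to Stress_j}) and in particular recovers $\Stress_1(P)$ and hence the affine type. Two remarks on the reduction. First, your Stage~1 is unnecessary: the annihilator $\{f\in R_1:f\circ\Stress_i(P)=0\}$ is computed purely from $\Stress_i(P)\subset R_i$ and never requires knowing $I_P$; once the socle vanishes this annihilator already equals $J_1=(\Theta_P,\ell)_1$. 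This matters, because your Stage~1 for $i\ge 3$ invokes the claim that every $(i-1)$-face participates in an $i$-stress, which is precisely the conjecture stated at the end of Section~5 and is \emph{open} for $i\ge 3$. Second, the paper's formulation via differentiation is equivalent to yours via Matlis duality, so there is no disagreement at the level of the reduction.

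The genuine gap is in your proof of the reduced statement. After the (correct) observation that induction on $i$ leaves only $\soc^R(B)_{i-1}=0$ to check, you write that for $d>2i$ ``one combines the hard Lefschetz property of $A$ with the link/antistar behaviour of the inverse system \dots\ feeding in the inductive statement for the vertex figures''. This is not an argument: the inductive hypothesis for $P_v$ only controls $\soc^R(B(P_v))_j$ for $j\le i-2$, and you give no mechanism by which that information lifts to $\soc^R(B(P))_{i-1}=0$ (the cone/restriction maps relate $A(P)$ to $A(P_v)$, but the passage to $B=A/\ell A$ and the specific degree $i-1$ are not addressed). The paper's proof of this step (Proposition~\ref{prop:socle and missing faces}) is entirely different and uses neither vertex figures nor induction. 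It computes the socle dimension \emph{exactly}:
\[
\dim_\R\soc^R(B)_k
=\beta^{\overline R}_{n-d-1,\,n-d-1+k}(B)
\stackrel{(\ast)}{=}\beta^{R}_{n-d-1,\,n-d-1+k}(R/I_{\partial P})
\stackrel{(\ast\ast)}{=}\beta^{R}_{1,\,d+1-k}(R/I_{\partial P})
=m_{d-k}(\partial P)
\]
for all $k<\lceil d/2\rceil-1$. Here $(\ast)$ is a result of Migliore--Nagel (Lemma~\ref{lem:MN} and eq.~\eqref{2-2}): quotienting by an element that acts injectively up to degree $\lceil d/2\rceil-1$ preserves $\beta_{i,i+k}$ for $k$ below that bound, so the Betti numbers of $B$ over $\overline R=R/(\Theta,\ell)R$ agree with those of $R/I_{\partial P}$ over $R$ in the relevant range; and $(\ast\ast)$ is the Gorenstein symmetry of the Betti table of a sphere (Lemma~\ref{lem:symmetry}), which swaps the last column with the first. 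The hypothesis $m_{d-k}(\partial P)=0$ for $k\le i-1$ then finishes at once. Your sketch mentions neither of the two key ingredients $(\ast)$ and $(\ast\ast)$, and I do not see how to complete the vertex-figure route without them. As a bonus, the paper's exact formula also yields Corollaries~\ref{cor:inequalities on g_k} and~\ref{cor:k-stacked}, which a bare vanishing argument would not.
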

	
	Conjecture \ref{Old Conjecture 1.2} was proved in \cite{NZ-AffReconstr} for polytopes that have no missing faces of dimension $\geq d-2i+2$. The goal of this paper is to prove Conjecture \ref{Old Conjecture 1.2} in full generality for all pairs $(i,d)$ with $d>2i\geq 4$ (thus also proving Conjecture \ref{conj:Kalai} for $d\geq 5$); see Theorem \ref{thm: Stress_i to Stress_j}. At the moment, the case of $d=2i$ ($d=4$ in Conjecture \ref{conj:Kalai}) remains open. 
	
	The space of affine $i$-stresses of $P$, $\Stress_i$, is a certain subspace of the $i$-th homogeneous part of $R$. To prove Conjecture \ref{Old Conjecture 1.2}, we establish in Theorem \ref{thm: Stress_i to Stress_j} the following stronger result: if $i\leq \lceil d/2\rceil-1$ and $P$ is a simplicial $d$-polytope that has no missing faces of dimension $\geq d-i+1$, then for all $1\leq k < i$, the space of affine $k$-stresses of $P$ is obtained from $\Stress_i$ by computing all partial derivatives of order $i-k$ of all elements of $\Stress_i$.

	Our proof relies on identifying the space of affine stresses of $P$ with the Matlis dual $N$ of $R/\big(I_P+(\Theta_P,\ell)\big)$ and then showing that if $I_P$ has no generators of degree $\geq d-i+2$, then $N$ is generated by elements of degree $\leq -i$. In fact, we show that for $1\leq k< \lceil \frac{d}{2}\rceil -1$,	the dimension of the degree $k$ part of the socle of $R/\big(I_P+(\Theta_P,\ell)\big)$ is the number of generators of $I_P$ of degree $d-k+1$ (equivalently, it is the number of missing faces of $P$ of dimension $d-k$). The proof of the latter statement is based on 
(the algebraic version of) the $g$-theorem for polytopes \cite{McMullen96,Stanley80} and on a result by Migliore and Nagel \cite{MiglioreNagel} that relates the graded Betti numbers of $R/I_P$ considered as an $R$-module to those of $R/\big(I_P+(\Theta_P,\ell)\big)$ considered as a module over $R/(\Theta_P,\ell)$. Since the $g$-theorem was recently established not only for simplicial $d$-polytopes but for all simplicial $(d-1)$-spheres \cite{Adiprasito-g-conjecture,KaruXiao,PapadakisPetrotou}, our result also applies to simplicial spheres with generic embeddings.\footnote{In fact, the $g$-theorem was established for all $\Z/2\Z$-homology spheres (see \cite[Theorem 1.3]{KaruXiao}). As the set of simplicial spheres forms a subfamily of $\Z/2\Z$-homology spheres which, in turn, forms a subfamily of $\R$-homology spheres, all results in this paper, namely, Theorem \ref{thm: Stress_i to Stress_j}, Proposition \ref{prop:socle and missing faces}, and Corollaries \ref{cor:inequalities on g_k} and \ref{cor:k-stacked}, continue to hold in the generality  of $\Z/2\Z$-homology spheres.} 
	
The proof of Conjecture \ref{Old Conjecture 1.2} leads to two corollaries on the $g$-numbers of simplicial $(d-1)$-spheres that are interesting in their own right. For a simplicial complex $\Delta$, denote by $m_{i}(\Delta)$ the number of missing faces of $\Delta$ of dimension $i$. The $g$-numbers of $\Delta$ are defined by $g_0(\Delta):=1$ and $g_k(\Delta):=h_k(\Delta)-h_{k-1}(\Delta)$ for $k>0$,
where $h_0(\Delta),h_1(\Delta),\dots, h_{\dim\Delta+1}(\Delta)$ are the $h$-numbers of $\Delta$. The function $a\mapsto a^{<k>}$ describes the maximum possible dimension that the degree $k+1$ part of a standard graded $\R$-algebra $A$ can have if $\dim A_k=a$.

\begin{corollary} \label{cor:inequalities on g_k} 
Let $d\geq 4$ and let $\Delta$ be a simplicial $(d-1)$-sphere. Then for all $1\leq k\leq \lceil\frac{d}{2}\rceil -1$, 
$g_k(\Delta)\geq m_{d-k}(\Delta)$. Furthermore,
$0\leq g_{k+1}(\Delta)\leq \big(g_k(\Delta)-m_{d-k}(\Delta)\big)^{<k>}$.
\end{corollary}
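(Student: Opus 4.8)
The plan is to deduce Corollary~\ref{cor:inequalities on g_k} from the structural facts that the proof of Conjecture~\ref{Old Conjecture 1.2} establishes about the Artinian reduction $A:=R/\big(I_P+(\Theta_P,\ell)\big)$ — or, in the sphere case, about the Artinian reduction obtained from a generic choice of linear system of parameters together with the sum-of-variables linear form (legitimate because the $g$-theorem now holds for all simplicial $(d-1)$-spheres). Concretely, I would use two ingredients: first, Proposition~\ref{prop:socle and missing faces}, which asserts that for $1\leq k<\lceil\frac d2\rceil-1$ the dimension of the degree~$k$ part of $\soc(A)$ equals $m_{d-k}(\Delta)$, the number of missing $(d-k)$-faces of $\Delta$; and second, the (algebraic) $g$-theorem, which tells us $A$ is a standard graded Artinian $\R$-algebra with $\dim_\R A_k=g_k(\Delta)$ for $0\le k\le \lceil\frac d2\rceil$ (the symmetry $h_k=h_{d-k}$ of the Dehn--Sommerville relations makes $A_k$ have the stated dimension in this range), and with $A_0=\R$ so that $A$ is generated in degree~$1$.

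The first inequality $g_k(\Delta)\ge m_{d-k}(\Delta)$ is then immediate: for $1\le k\le \lceil\frac d2\rceil-1$ we have $m_{d-k}(\Delta)=\dim_\R\soc(A)_k\le \dim_\R A_k=g_k(\Delta)$, where the boundary cases $k=\lceil\frac d2\rceil-1$ (and, if needed, the regime where Proposition~\ref{prop:socle and missing faces} is stated with a strict inequality) are handled by the inequality $\dim_\R\soc(A)_k\ge m_{d-k}(\Delta)$ coming from Nagel's theorem quoted in the introduction, which already gives $g_k\ge m_{d-k}$ in that range. For the refined upper bound, consider the quotient $B:=A/\langle \soc(A)_k\rangle$, i.e.\ kill the degree-$k$ socle elements. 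This is again a standard graded Artinian $\R$-algebra, now with $\dim_\R B_k=g_k(\Delta)-m_{d-k}(\Delta)$, and multiplication $B_k\to B_{k+1}$ is \emph{surjective}: any element of $A_{k+1}$ not hit from $A_k$ would produce, by standard duality/Macaulay-inverse-system reasoning, extra socle in degree $k$ beyond what Proposition~\ref{prop:socle and missing faces} allows — equivalently, since $A$ has no socle in degrees strictly between $1$ and $k$ in the relevant range, the degree-$(k+1)$ piece of $A$ is generated over $A_k$, and the socle elements we killed act as zero in degree $k+1$ anyway. Hence $B_{k+1}=A_{k+1}$, so $g_{k+1}(\Delta)=\dim_\R B_{k+1}\le (\dim_\R B_k)^{<k>}=\big(g_k(\Delta)-m_{d-k}(\Delta)\big)^{<k>}$ by Macaulay's bound on the growth of standard graded algebras. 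The nonnegativity $g_{k+1}(\Delta)\ge 0$ is part of the $g$-theorem.

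The step I expect to be the main obstacle is making rigorous the claim that $B_{k+1}=A_{k+1}$, i.e.\ that passing to $A/\langle\soc(A)_k\rangle$ does not shrink degree $k+1$; this is where one must invoke the precise relationship, developed in the body of the paper, between $\soc(A)$ and the generators of the Matlis dual $N$ of $A$ (the fact that $N$ is generated in degrees $\le -i$, specialized here to control the top of $A$), together with the observation that $\soc(A)_k$ meets $A_k$ in a subspace whose image in $A_{k+1}$ under multiplication by $A_1$ is zero by definition of socle — so the only content is that \emph{no other} elements of $A_{k+1}$ are lost, which follows once one knows $A$ has no socle in the intermediate degrees $2,\dots,k$, a consequence of Proposition~\ref{prop:socle and missing faces} combined with the hypothesis that $\Delta$ (being a sphere) has no missing faces forcing socle too low. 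A secondary point requiring a little care is the exact range of $k$: Proposition~\ref{prop:socle and missing faces} is stated for $1\le k<\lceil\frac d2\rceil-1$, so the endpoint $k=\lceil\frac d2\rceil-1$ of the corollary must be treated separately, using Dehn--Sommerville symmetry to identify $g_{k+1}$ with a lower $g$-number when $d$ is even, and using Nagel's inequality directly when that is all that is needed.
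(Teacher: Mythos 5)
Your proposal follows essentially the same route as the paper: take the Artinian reduction $M=R/\big(I_\Delta+(\Theta,\ell)\big)$, note $\dim_\R M_k = g_k(\Delta)$ by the $g$-theorem, observe $\soc^R(M)_k\subseteq M_k$ to get $g_k\geq m_{d-k}$ from Proposition~\ref{prop:socle and missing faces}, then pass to $M/\soc^R(M)_k$ and apply Macaulay's theorem. That is exactly what the paper does.

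However, the step you flag as ``the main obstacle'' --- that $B_{k+1}=A_{k+1}$ after quotienting out $\soc(A)_k$ --- is not an obstacle at all, and the auxiliary reasoning you sketch for it (surjectivity of $B_k\to B_{k+1}$, inverse-system duality, absence of socle in intermediate degrees $2,\dots,k$) is both unnecessary and partly off the mark. The ideal of $A$ generated by $\soc(A)_k$ coincides, as a graded vector space, with $\soc(A)_k$ itself: by definition of the socle, $\m\cdot \soc(A)_k = 0$, so multiplying socle elements by anything of positive degree gives zero, and multiplying by degree-zero scalars stays inside $\soc(A)_k$. Consequently $B_i=A_i$ for every $i\neq k$, and $\dim_\R B_k = \dim_\R A_k - \dim_\R \soc(A)_k \leq g_k(\Delta)-m_{d-k}(\Delta)$ (with equality when $k<\lceil d/2\rceil -1$). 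Macaulay's bound on the Hilbert function of the standard graded algebra $B$ then gives $g_{k+1}(\Delta)=\dim_\R B_{k+1}\leq (\dim_\R B_k)^{\langle k\rangle}\leq \big(g_k(\Delta)-m_{d-k}(\Delta)\big)^{\langle k\rangle}$, using monotonicity of $a\mapsto a^{\langle k\rangle}$. No appeal to the generators of the Matlis dual, to surjectivity of any specific multiplication map, or to the vanishing of socle in lower degrees is required --- the endpoint $k=\lceil d/2\rceil -1$ is also handled uniformly, since the inequality version of Proposition~\ref{prop:socle and missing faces} already gives $\dim_\R B_k \leq g_k - m_{d-k}$ there.
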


\begin{corollary} \label{cor:k-stacked}
Let $\Delta$ be a simplicial $(d-1)$-sphere. Then for $1\leq k\leq \lfloor\frac{d}{2}\rfloor  -1$, $\Delta$ is $k$-stacked if and only if $g_k(\Delta)=m_{d-k}(\Delta)$. Moreover, if $d$ is odd and $\Delta$ is $\frac{d-1}{2}$-stacked, then $g_{\frac{d-1}{2}}(\Delta)=m_{\frac{d+1}{2}}(\Delta)$.
\end{corollary}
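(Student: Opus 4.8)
\medskip\noindent
The plan is to prove the two implications separately. For ``$g_k(\Delta)=m_{d-k}(\Delta)\Rightarrow\Delta$ is $k$-stacked'': since $\big(g_k(\Delta)-m_{d-k}(\Delta)\big)^{<k>}=0$, Corollary~\ref{cor:inequalities on g_k} forces $g_{k+1}(\Delta)=0$, and by the Generalized Lower Bound Theorem for simplicial spheres --- whose nontrivial half is the theorem of Murai and Nevo --- this means that $\Delta$ is $k$-stacked. Conceptually, this half records the dictionary behind Corollary~\ref{cor:inequalities on g_k}: in the Artinian reduction $A:=R/\big(I_\Delta+(\Theta,\ell)\big)$ one has $\dim_\field A_k=g_k(\Delta)$ by the $g$-theorem and, in the relevant range, $\dim_\field\soc(A)_k=m_{d-k}(\Delta)$ by Proposition~\ref{prop:socle and missing faces}; hence $g_k=m_{d-k}$ says precisely that $\soc(A)_k=A_k$, i.e.\ that $A_{k+1}=A_1\cdot A_k=0$, i.e.\ that $g_{k+1}=0$.

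For the converse, suppose $\Delta$ is $k$-stacked; I would prove $g_k(\Delta)\le m_{d-k}(\Delta)$, which together with Nagel's inequality $g_k(\Delta)\ge m_{d-k}(\Delta)$ from Corollary~\ref{cor:inequalities on g_k} gives the equality. This argument works for every $k$ with $2k<d$ --- hence for $1\le k\le\lfloor d/2\rfloor-1$, and also for $k=\tfrac{d-1}2$ when $d$ is odd, which is the ``moreover'' clause (there one obtains only the implication, since $g_{(d+1)/2}(\Delta)=0$ holds unconditionally by Dehn--Sommerville). Let $B$ be a $k$-stacked $d$-ball with $\partial B=\Delta$, so every interior face of $B$ has dimension $\ge d-k$. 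The canonical module of the Stanley--Reisner ring $\field[B]$ is the ideal $\omega$ generated by the monomials $x_F$ of the interior faces $F$ of $B$; since such faces have at least $d-k+1$ vertices, $\omega$ is generated in degrees $\ge d-k+1$, and as the canonical module carries the reversed $h$-vector, $h_i(\omega)=h_{d+1-i}(B)$, this forces $h_j(B)=0$ for all $j>k$. The short exact sequence $0\to\omega\to\field[B]\to\field[\partial B]\to0$ yields the identity $g_j(\partial B)=h_j(B)-h_{d+1-j}(B)$ for $j\le\lfloor(d-1)/2\rfloor$; applying it at $j=k$, where $h_{d+1-k}(B)=0$ because $d+1-k>k$, gives $g_k(\Delta)=h_k(B)$. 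Moreover $h_k(B)=h_{d-k+1}(\omega)$, and since $\omega$ vanishes in degrees below $d-k+1$ this equals $\dim_\field\omega_{d-k+1}$, the number of minimal interior $(d-k)$-faces of $B$. Finally, each minimal interior $(d-k)$-face $F$ of $B$ is a missing $(d-k)$-face of $\Delta$: we have $F\notin\partial B=\Delta$, while every proper subset of $F$ has dimension $<d-k$ and therefore lies on $\partial B$ by $k$-stackedness. Hence $g_k(\Delta)=h_k(B)=\#\{\text{minimal interior }(d-k)\text{-faces of }B\}\le m_{d-k}(\Delta)$, as wanted.

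The step I expect to be the main obstacle is precisely this inequality $g_k(\Delta)\le m_{d-k}(\Delta)$ for a $k$-stacked sphere --- it is the only place where one genuinely uses that $\Delta$ bounds a $k$-stacked ball rather than merely that $g_{k+1}(\Delta)=0$. Making it precise requires the structure theory of $k$-stacked balls: identifying the canonical module of $\field[B]$ with the interior-face ideal (valid for simplicial, indeed homology, balls), tracking exactly which $h$-numbers of $B$ vanish, and checking that the minimal generators of that ideal are genuine missing faces of $\Delta$. A secondary matter is to pin down the precise form and range of the Generalized Lower Bound Theorem for simplicial (and, in the generality of the paper, $\Z/2\Z$-homology) spheres used in the first paragraph. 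With these in hand, the two implications combine to give Corollary~\ref{cor:k-stacked}.
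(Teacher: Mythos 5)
Your forward direction ($g_k=m_{d-k}\Rightarrow k$-stacked via Macaulay's bound and Murai--Nevo) is exactly the paper's argument. Your converse, however, takes a genuinely different route. The paper invokes the sharper part of Murai--Nevo's structure theory, namely that the $k$-stacked homology $d$-ball $\Gamma$ bounding $\Delta$ is unique and its interior faces are precisely the sets $F$ with $F\notin\Delta$ but every $(d-k)$-subset of $F$ in $\Delta$; this pins down $f_{d-k}(\Gamma,\partial\Gamma)=m_{d-k}(\Delta)$ on the nose, after which the chain $g_k(\Delta)=h_k(\Gamma)=h_{d+1-k}(\Gamma,\partial\Gamma)=f_{d-k}(\Gamma,\partial\Gamma)$ gives equality outright, with no appeal to Corollary \ref{cor:inequalities on g_k}. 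You instead use only the existence of a $k$-stacked ball $B$ together with the elementary observation that each interior $(d-k)$-face of $B$ is a missing $(d-k)$-face of $\Delta$, which yields the one-sided estimate $g_k(\Delta)=h_k(B)=\dim_\field\omega_{d-k+1}\le m_{d-k}(\Delta)$, and you then close the gap with Nagel's inequality from Corollary \ref{cor:inequalities on g_k}. The two proofs are of comparable weight: the paper leans more heavily on the Murai--Nevo characterization of $k$-stacked balls but keeps the converse independent of Corollary \ref{cor:inequalities on g_k}, whereas you avoid the characterization (and the uniqueness of $B$) at the cost of importing the Lefschetz-type input a second time. Your range bookkeeping and the canonical-module computation ($h_j(\omega)=h_{d+1-j}(B)$, vanishing of $h_j(B)$ for $j>k$, and $\dim_\field\omega_{d-k+1}=h_{d-k+1}(\omega)$ since $\omega$ starts in degree $d-k+1$) all check out, so the proposal is correct.
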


 We remark that Corollary \ref{cor:inequalities on g_k} provides a strengthening of the (numerical part of the) $g$-theorem asserting that for all $1\leq k\leq \frac{d}{2} -1$ and any simplicial $(d-1)$-sphere $\Delta$, $0\leq g_{k+1}(\Delta)\leq \big(g_k(\Delta)\big)^{<k>}$. After this paper was submitted, we realized that Corollary \ref{cor:inequalities on g_k}  is not new: it is part of \cite[Corollary 4.6(a)]{Nagel} due to Nagel; in fact, the proof that we provide here for completeness closely follows the proof of Nagel. It is also worth pointing out that the inequality $g_k(\Delta)\geq m_{d-k}(\Delta)$ of Corollary \ref{cor:inequalities on g_k} is similar in flavor to \cite[Corollary 6.5]{Adiprasito-toric}.

Simplicial $(d-1)$-spheres that are $k$-stacked are of special interest in connection with the Lower Bound Theorem (for $k=1$) \cite{Kalai87} and the Generalized Lower Bound Theorem (for any $1\leq k\leq \frac d 2 -1$) \cite{MuraiNevo2013}. Specifically, \cite{MuraiNevo2013} (combined with the $g$-theorem for spheres) proves that if $1\le k\leq \frac d 2-1$, then a simplicial $(d-1)$-sphere $\Delta$ is $k$-stacked if and only if $g_{k+1}(\Delta)=0.$\footnote{If $d$ is odd, then $g_{(d+1)/2}(\Delta)=0$ for {\em any} simplicial $(d-1)$-sphere $\Delta$. Thus, Murai--Nevo's characterization of $k$-stacked spheres cannot be extended to the case of $k=(d-1)/2$.} The characterization of $k$-stacked spheres given in Corollary \ref{cor:k-stacked} appears to be new. Both Corollaries \ref{cor:inequalities on g_k} and \ref{cor:k-stacked} make progress on \cite[Problem 1]{Kalai-survey} that asks for a characterization of possible vectors $(m_1,m_2,\ldots,m_d)$ that arise from simplicial $(d-1)$-spheres.

The reader may have noticed that in contrast to Conjecture \ref{conj:Kalai}, knowing the graph of $P$  is not part of the assumptions of the $i=2$ case of Conjecture \ref{Old Conjecture 1.2}. The fact that the graph of $P$ is not needed in this case is an easy consequence of the following result, which we prove here (previously this was only known for spheres with generic embeddings \cite{Z-rigidity}):

\begin{theorem} \label{thm:edges in stresses}
 If $d\geq 4$ and $P$ is a simplicial $d$-polytope that has no missing faces of dimension $\geq d-1$, then every edge of $P$ participates in some affine $2$-stress on $P$, that is, for every edge $e$ of $P$ there exists an affine $2$-stress on $P$ whose value on $e$ is non-zero.
\end{theorem}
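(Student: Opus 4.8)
The plan is to translate the statement into the non-vanishing of a single monomial in a graded algebra. As recalled in the introduction, $\Stress_2$ is the degree-$2$ component of the Macaulay inverse system of $I_P+(\Theta_P,\ell)$, so under the apolarity pairing on $R_2$ one has $\Stress_2^{\perp}=\bigl(I_P+(\Theta_P,\ell)\bigr)_2$. Hence an edge $e=\{a,b\}$ of $P$ carries an affine $2$-stress with non-zero value on $e$ if and only if the image of $x_ax_b$ in $\overline{A}:=R/\bigl(I_P+(\Theta_P,\ell)\bigr)$ is non-zero in degree $2$. Since $\{a,b\}\in\partial P$ we have $x_ax_b\notin(I_P)_2$, so $x_ax_b$ is already non-zero in the Artinian reduction $A(\partial P)=R/(I_P+\Theta_P)$; the whole content of the theorem is that its image survives modulo $\ell$. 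Because $\partial P$ is a sphere, $A(\partial P)$ is a Poincar\'e duality algebra with socle in degree $d$ and $\ell$ is a strong Lefschetz element (the $g$-theorem), so $\dim\overline{A}_2=h_2-h_1=g_2$; a dimension count then yields $(I_P)_2\cap(\Theta_P,\ell)_2=0$, i.e.\ $\bigl(I_P+(\Theta_P,\ell)\bigr)_2=(I_P)_2\oplus(\Theta_P,\ell)_2$.

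Using this direct-sum decomposition, $x_ax_b$ is non-zero in $\overline{A}_2$ precisely when the restriction of the quadric $x_ax_b$ to the linear space $V$ of affine dependences among the vertices of $P$ --- so that $R/(\Theta_P,\ell)$ is the coordinate ring of $V$; this is, in essence, the Gale-diagram picture of $P$ --- is \emph{not} a linear combination of the restrictions to $V$ of the non-edge quadrics $x_ix_j$, $\{i,j\}\notin\partial P$. To certify this it suffices to produce a linear functional on the space of quadratic functions on $V$ that vanishes on all those non-edge quadrics but not on $x_ax_b$; the simplest candidates are polarizations of two affine dependences: for $\mu,\nu\in V$, the functional $q\mapsto\tfrac12\bigl(q(\mu+\nu)-q(\mu)-q(\nu)\bigr)$ sends $x_ix_j$ to $\tfrac12(\mu_i\nu_j+\mu_j\nu_i)$ and $x_ax_b$ to $\tfrac12(\mu_a\nu_b+\mu_b\nu_a)$. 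Thus it is enough to exhibit two affine dependences $\mu,\nu$ of the vertices of $P$ with $a\in\supp\mu\setminus\supp\nu$ and $b\in\supp\nu\setminus\supp\mu$ such that no non-edge of $P$ has one endpoint in $\supp\mu$ and the other in $\supp\nu$: then the functional kills every non-edge quadric while sending $x_ax_b$ to $\tfrac12\mu_a\nu_b\neq0$.

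Constructing such $\mu$ and $\nu$ is the technical heart --- and the only genuinely hard step --- of the proof, and it is here that both hypotheses enter essentially: for $d=3$ there are no affine $2$-stresses at all ($g_2=0$ for every simplicial $2$-sphere), and a simplicial polytope with a missing facet (e.g.\ a stacked polytope) has affine-dependence structure too sparse for some edges to lie in any stress, so one really needs $d\geq4$ together with the absence of missing faces of dimension $\geq d-1$. Concretely I would start from a facet $F$ through $a$ avoiding $b$ and a facet $G$ through $b$ avoiding $a$ (such facets exist for any edge of any simplicial $d$-polytope) and enlarge them, inside the vertex links $\lk_{\partial P}(a)$ and $\lk_{\partial P}(b)$ --- which again have no missing faces of large dimension --- to $(d+2)$-element vertex sets supporting affine dependences that involve $a$, respectively $b$, while keeping every pair with one vertex in each set an edge; making this work uniformly is the main obstacle. (An alternative is induction on $d$, lifting an affine $2$-stress of the vertex figure $P/c$ across a vertex figure via the cone lemma \cite{Lee96,Tay-et-al-I,Tay-et-al}; but then one must be careful, since a vertex link can acquire a missing facet not present in $\partial P$, so the apex vertex $c$ must be chosen with care or the inductive hypothesis strengthened, and the cone lemma must be applied to the polytopal embedding, rather than to a generic one --- for which the result is already known \cite{Z-rigidity}.)
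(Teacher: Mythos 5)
Your algebraic reduction is correct and appealing: the apolarity pairing does identify $\Stress_2^\perp$ with $\bigl(I_P+(\Theta_P,\ell)\bigr)_2$, the $g$-theorem gives $\dim\overline{A}_2=g_2$, a dimension count then forces $(I_P)_2\cap(\Theta_P,\ell)_2=0$, and the conclusion becomes exactly the non-vanishing of $x_ax_b|_V$ modulo the span of the restricted non-edge quadrics. But the write-up stops at precisely the point where the theorem's content lives. What remains is to construct, for each edge $ab$, dependences $\mu,\nu$ (or a linear combination of polarizations) certifying that non-vanishing, and that is exactly where the hypotheses $d\geq 4$ and ``no missing faces of dimension $\geq d-1$'' must do their work --- you say as much yourself (``the technical heart --- and the only genuinely hard step'', ``making this work uniformly is the main obstacle''). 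So what you have is a reformulation, not a proof. Your sufficient condition for a single polarization $\phi_{\mu,\nu}$ is also quite demanding: it forces $\supp\mu\times\supp\nu$ (off the diagonal) to consist entirely of edges, i.e.\ it asks for a complete bipartite subgraph of $G(P)$ of size roughly $(d+2)\times(d+2)$ positioned around $a$ and $b$, whereas the hypothesis controls only missing faces of dimension $\geq d-1$ and says nothing directly about the $1$-skeleton. Whether such $\mu,\nu$ always exist, and how to find them, is entirely open in your argument.

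The paper proves the theorem by a completely different route: induction on $d$ using classical rigidity tools. The base case $d=4$ rests on a sign-theoretic strengthening of Dehn's lemma (Lemma~\ref{stress on 3-polytope union an edge}): for a simplicial $3$-polytope $P$ and a missing edge $e=ab$, the unique $2$-stress with $\lambda_e=1$ on $G(Q)\cup e$ (where $Q$ is the union of prime components of $P$ meeting the open segment between $p(a)$ and $p(b)$) assigns strictly negative weight to every edge of $Q$ incident to $a$ or $b$. That, combined with the Gluing Lemma and the Cone Lemma, yields the $d=4$ case (Lemma~\ref{lm: 4-dimensional case}). The induction to higher $d$ passes through vertex figures via the Cone Lemma and, as you anticipate, must contend with links acquiring missing facets not present in $\partial P$; the paper handles this by proving a strengthened two-part inductive statement whose part (2) is tailored to absorb exactly that complication. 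You mention this inductive route only as a dismissed alternative, but it is the one the paper takes, and your caveat about missing facets appearing in links is precisely the subtlety the strengthened statement is designed to handle.
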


This theorem solves the $i=2$ case of \cite[Conjecture 3.3]{NZ-AffReconstr}. Its significance from the rigidity theory point of view is that if $d\geq 4$ and $P$ is a simplicial $d$-polytope that has no missing faces of dimension $\geq d-1$, then the graph of $P$ is redundantly rigid, that is, the graph of $P$ is infinitesimally rigid, and for every edge $e$ of $P$, the graph of $P$ with $e$ deleted is also infinitesimally rigid. Its significance from the algebraic point of view is that if $I_P$ has no generators of degree $\geq d$, then the set of degree two squarefree monomials contained in $I_P+ (\Theta_P,\ell)$ is equal to the set of degree two squarefree monomials contained in $I_P$.

The structure of the rest of the paper is as follows. In Section 2, we provide basic definitions pertaining to simplicial polytopes and simplicial spheres, discuss Stanley--Reisner rings of simplicial complexes and the $g$-theorem for spheres, review some background related to Macaulay's inverse systems and affine stresses as well as to several other commutative algebra notions such as socles and graded Betti numbers. Section 3 is devoted to the proof of Conjecture \ref{Old Conjecture 1.2} for $d>2i$; see Theorem \ref{thm: Stress_i to Stress_j}. In Section 4, we verify Corollaries \ref{cor:inequalities on g_k} and \ref{cor:k-stacked}. Finally, in Section 5, we prove Theorem \ref{thm:edges in stresses}. Section~5 relies on several standard tools from rigidity theory of frameworks, and can be read independently from the rest of the paper.

\section{Setting up the stage}
\subsection{Simplicial polytopes and simplicial spheres}
One of the main objects considered in this paper is the class of simplicial polytopes. A {\em polytope} $P\subset \R^d$ is the convex hull of finitely many points in $\R^d$. A (proper) {\em face} of $P$ is the intersection of $P$ with a supporting hyperplane of $P$. The dimension of $P$ is defined as the dimension of the affine hull of $P$, and we say that $P$ is a {\em $d$-polytope} if the dimension of $P$ is $d$. All (proper) faces of a $d$-polytope are by themselves (smaller-dimensional) polytopes. We refer to $0$-faces as {\em vertices}, $1$-faces as {\em edges}, and $(d-1)$-faces as {\em facets}. A polytope $P$ is a (geometric) {\em simplex} if the vertices of $P$ are affinely independent. A polytope $P$ is {\em simplicial} if all facets of $P$ are simplices. 

Related to simplicial polytopes  is the notion of simplicial complexes. An (abstract) {\em simplicial complex} $\Delta$ with (a finite) vertex set $V=V(\Delta)$ is a collection of subsets of $V$ that is closed under inclusion and contains all singletons. The elements of $\Delta$ are {\em faces}. We say that $F\in \Delta$ is a face of {\em dimension $i$} or an {\em $i$-face} if $|F|=i+1$. The {\em dimension} of $\Delta$ is the maximum dimension of faces of $\Delta$. As in the case of polytopes, $0$-faces are called {\em vertices}, $1$-faces are {\em edges}, and maximal under inclusion faces are {\em facets}. For brevity, when referring to vertices and edges, we usually write $v$ and $uv$ instead of $\{v\}$ and $\{u,v\}$, respectively. 

A subset $F$ of $V$ is a {\em missing face} of $\Delta$ if $F$ is not a face of $\Delta$ but all proper subsets of $F$ are faces of $\Delta$; we say that a missing face has dimension $i$ if $|F|=i+1$. If $\Delta$ is $(d-1)$-dimensional, then the missing faces of $\Delta$ of dimension $d-1$ are often called {\em missing facets}. It is worth remarking that the collection of all missing faces of $\Delta$ together with $V(\Delta)$ uniquely determines $\Delta$.

If $\Delta$ is a $(d-1)$-dimensional simplicial complex and $0\leq i\leq d-1$, then the {\em $i$-skeleton} of $\Delta$, $\skel_i(\Delta)$, is a subcomplex of $\Delta$ consisting of all faces of $\Delta$ of dimension at most $i$. The $1$-skeleton of $\Delta$ is also called the {\em graph} of $\Delta$. If $W\subset V(\Delta)$ then the subcomplex of $\Delta$ {\em induced} by $W$ is $\Delta_W:=\{F\in \Delta : F\subseteq W\}$. Two other important types of subcomplexes of $\Delta$ are links and stars. If $F$ is a face of $\Delta$ then the {\em link} of $F$ and the {\em star} of $F$ are defined as follows:
\[\st_\Delta(F)=\st(F):=\{G\in\Delta : F\cup G\in\Delta\} \mbox{ and } \lk_\Delta(F)=\lk(F):=\{G\in\st(F) : F\cap G=\emptyset\}. \]

If $\Delta$ is a simplicial complex and $\Gamma$ is a subcomplex of $\Delta$, we let $(\Delta,\Gamma)$ denote the corresponding {\em relative simplicial complex}: its faces are precisely the faces of $\Delta$ not contained in $\Gamma$. The {\em dimension} of  $(\Delta,\Gamma)$ is defined as the maximum dimension of its faces.

Associated with a simplicial complex $\Delta$ is a topological space $\|\Delta\|$ (realized as a subset of $\R^k$ for some $k$ that is usually larger than $\dim\Delta$) called the {\em geometric realization} of $\Delta$: it contains a geometric $i$-simplex for each $i$-face of $\Delta$. A simplicial complex $\Delta$ is called a {\em simplicial $(d-1)$-sphere} if $\|\Delta\|$ is homeomorphic to a $(d-1)$-sphere. Similarly, $\Delta$ is a {\em simplicial $(d-1)$-ball} if $\|\Delta\|$ is homeomorphic to a $(d-1)$-ball. 

Every simplicial $d$-polytope $P$ gives rise to a simplicial $(d-1)$-sphere $\partial P$ called the {\em boundary complex} of $P$: the faces of $\partial P$ are the vertex sets of all proper faces of $P$ (i.e., including the empty face, but not including $P$ itself). Furthermore, for a face $\tau$ of $\partial P$, $\lk_{\partial P}(\tau)$ is the boundary complex of a simplicial polytope. When $\tau=v$ is a vertex, one such polytope is obtained by intersecting $P$ with a hyperplane that has $v$ on one side and all other vertices of $P$ on the other. The resulting polytope, $P/v$, is called a {\em vertex figure} of $P$ at $v$ or a {\em quotient polytope of $P$ by $v$}. For a face $\tau$ of positive dimension, a {\em quotient of $P$ by $\tau$}, $P/\tau$, is obtained by iteratively taking vertex figures of polytopes at the vertices of $\tau$.

Simplicial spheres and balls are special cases of $\R$-homology spheres and $\R$-homology balls, respectively. A $(d-1)$-dimensional simplicial complex $\Delta$ is  a {\em homology $(d-1)$-sphere} if for every face $F$ of $\Delta$ (including the empty face), the (simplicial)  $\R$-homology of $\lk_\Delta(F)$, $\tilde{H}_*(\lk_\Delta(F);\R)$, coincides with that of a $(d-|F|)$-sphere. A $(d-1)$-dimensional simplicial complex $\Delta$ is called a {\em homology $(d-1)$-ball} if (1) the $\R$-homology of $\Delta$ coincides with that of a $(d-1)$-ball, (2) for each nonempty face $F\in \Delta$, the link of $F$ has the $\R$-homology of either a $(d-|F|)$-sphere or a $(d-|F|)$-ball, and (3) the {\em boundary complex} of $\Delta$, i.e., $\partial \Delta:=\big\{F\in\Delta : \tilde{H}(\lk_\Delta(F);\R)=0\big\},$ is a homology $(d-2)$-sphere. The boundary complex of a homology $(d-1)$-ball $\Delta$ can alternatively be described as the collection of all $(d-2)$-faces of $\Delta$ that are contained in exactly one facet, together with their subsets. A face $F$ of $\Delta$ is called a {\em boundary face} if $F\in\partial \Delta$ and $F$ is an {\em interior face} if $F\notin\partial\Delta$. That is, the interior faces of $\Delta$ form the relative simplicial complex $(\Delta,\partial\Delta)$.

Let $\Delta$ be a simplicial $(d-1)$-sphere and $0\leq k\leq d$. We say that $\Delta$ is {\em $k$-stacked} if there exists a homology $d$-ball $B$ such that $\Delta=\partial B$ and all interior faces of $B$ are of dimension $\geq d-k$. In other words, $\Delta=\partial B$ and the relative complex $(B,\partial B)$ has no faces of dimension $\leq d-k-1$. Any $0$-stacked sphere is the boundary complex of a simplex; $1$-stacked spheres are known in the literature simply as stacked spheres. 

Let $\Delta$ be either a $(d-1)$-dimensional simplicial complex or a simplicial $d$-polytope. For $-1\leq i \leq d-1$, we let $f_i(\Delta)$ denote the number of $i$-faces of $\Delta$. The {\em $f$-vector} of $\Delta$ is $f(\Delta)=(f_{-1}(\Delta), f_0(\Delta), \ldots, f_{d-1}(\Delta)).$ The {\em $h$-vector} of $\Delta$, $h(\Delta)=(h_0(\Delta),h_1(\Delta), \ldots,h_d(\Delta)),$ is defined by the following polynomial relation: $\sum_{i=0}^d h_i(\Delta) t^{d-i}= \sum_{i=0}^d f_{i-1}(\Delta) (t-1)^{d-i}$. Thus, the $h$-vector is obtained from the $f$-vector by an invertible linear transformation and $h_0(\Delta)=f_{-1}(\Delta)=1$. We also define the {\em $g$-numbers} of $\Delta$ by $g_0(\Delta):=1$ and $g_i(\Delta):=h_i(\Delta)-h_{i-1}(\Delta)$ for $0<i\leq \lceil d/2 \rceil$.  For a relative simplicial complex $(\Delta,\Gamma)$, one defines the $f$- and $h$-vectors using the same formulas. The only thing to note is that if $\Gamma$ is not the void complex, then $h_0(\Delta,\Gamma)=f_{-1}(\Delta,\Gamma)=0$.

\subsection{Stanley--Reisner rings and the $g$-theorem}
Let $\Delta$ be a $(d-1)$-dimensional simplicial complex with vertex set $V(\Delta)$ of size $n$ which we identify with $[n]:=\{1,2,\ldots,n\}$. Let $R:=\R[x_1,\ldots,x_n]$ be a polynomial ring with $n$ variables. The {\em Stanley--Reisner ideal} of $\Delta$ is the ideal of $R$ generated by non-faces of $\Delta$ (equivalently, by missing faces of $\Delta$):
\[I_\Delta=(x_{j_1}x_{j_2}\cdots x_{j_k} : \{j_1,\ldots,j_k\}\notin\Delta).\]
The {\em Stanley--Reisner ring} (or {\em face ring}) of $\Delta$ is the quotient $R/I_\Delta$. This is a graded ring. The Hilbert series of $R/I_\Delta$ is given by $\big(\sum_{i=0}^d h_i(\Delta) t^i\big)/(1-t)^d$; see \cite[Theorem II.1.4]{Stanley96}. In particular, the Krull dimension of $R/I_\Delta$ is $d$. A sequence of $d$ linear forms $\theta_1,\ldots,\theta_d\in R$ is called a {\em linear system of parameters} of $R/I_\Delta$ ({\em l.s.o.p.}~for short) if the ring $R/\big(I_\Delta+(\theta_1,\ldots,\theta_d)\big)$ is a finite-dimensional $\R$-vector space.

A {\em $d$-embedding} of $\Delta$ is any map $p: V(\Delta)=[n] \to \R^d$. We consider two types of embeddings: natural and generic. A $d$-embedding $p$ is called {\em generic} if the multiset of coordinates of the points $p(v)$, $v\in [n]$, is algebraically independent over $\Q$. In the case that $\Delta=\partial P$ is the boundary complex of a simplicial $d$-polytope $P$, one can also consider $p: [n]\to \R^d$ given by the position vectors of vertices of $P$; such $p$ is called a {\em natural} embedding. 
In this case, for $F=\{i_1,\ldots,i_k\}\in\partial P$, we write $[p(i_1),\ldots, p(i_k)]$ as a shorthand for the corresponding face $\conv(p(i_1),\ldots,p(i_k))$ of $P$. 

A $d$-embedding $p$ gives rise to $d$ linear forms in $R$: $\theta_i:=\sum_{v=1}^n p(v)_ix_v$ (for $i=1,\ldots,d$), where $p(v)_i$ is the $i$th coordinate of $p(v)$. Let $\Theta =\Theta(p)$ be the set of these $d$ forms. Assume that $(\Delta, p)$ is either the boundary complex a simplicial $d$-polytope $P$ (that contains the origin in the interior) with its natural embedding or a simplicial $(d-1)$-sphere with a generic embedding. Then $\Theta =\Theta(p)$ is an l.s.o.p of $R/I_\Delta$ (this follows from \cite[Theorem III.2.4]{Stanley96}); call $\Theta$ the {\em l.s.o.p.~determined by $p$}.  

The Stanley--Reisner ring of a simplicial $(d-1)$-sphere $\Delta$ is {\em Cohen--Macaulay} \cite{Reisner}; hence any l.s.o.p.~$\Theta$ of $R/I_\Delta$ is a {\em regular sequence}, that is,  for any $1\leq i\leq d$, $\theta_i$ is a non-zero-divisor on $R/\big(I_\Delta+(\theta_1,\ldots,\theta_{i-1})\big)$. Consequently, $\dim_\R \big(R/(I_\Delta+(\Theta))\big)_i=h_i(\Delta)$ for all $0\leq i\leq d$. For instance, if $\Delta$ is the boundary complex of a $2$-simplex, then  $R/I_\Delta=\R[x_1,x_2,x_3]/(x_1x_2x_3)$. Furthermore, if  the position vectors of vertices of $\Delta$ in $\R^2$ are $p(1)=(1,0)$, $p(2)=(0,1)$, and $p(3)=(-1,-1)$, then $\theta_1=x_1-x_3$ and $\theta_2=x_2-x_3$. In this case $R/(I_\Delta+(\Theta))$ is isomorphic to $R[x]/(x^3)$. This agrees with an easy fact that  $h(\Delta)=(1,1,1)$.

A much harder result, known as the algebraic version of the $g$-theorem, asserts that if $(\Delta, p)$ is as in the previous paragraphs and $\Theta=\Theta(p)$, then $\ell=\sum_{v=1}^n x_v$ is a non-zero-divisor on degree $\leq \lceil d/2\rceil-1$ parts of $R/\big(I_\Delta+(\Theta)\big)$; {for this reason, we call $\ell$ the {\em canonical Lefschetz element.} More precisely, the following holds (see \cite{McMullen96,Stanley80} for the case of polytopes and \cite{Adiprasito-g-conjecture,KaruXiao,PapadakisPetrotou}, e.g., \cite[Theorem 1.3]{KaruXiao}, for the case of spheres).

\begin{theorem} \label{alg-g-thm}
Let $(\Delta,p)$ be either the boundary complex of a simplicial $d$-polytope $P$ with its natural embedding or a simplicial $(d-1)$-sphere with a generic embedding, let $\Theta=\Theta(p)$ be the l.s.o.p.~determined by $p$, and let $\ell=\sum_{v=1}^n x_v$. Then the map 
$$\cdot \ell: \big(R/(I_\Delta+(\Theta)\big)_i \to \big(R/(I_\Delta+(\Theta)\big)_{i+1}$$ is injective for all $0\leq i\leq \lceil d/2\rceil -1$ and surjective for all $\lfloor d/2\rfloor \leq i\leq d-1$. In particular, $\dim_\R  \big(R/\big(I_\Delta+(\Theta, \ell)\big)\big)_i=g_i(\Delta)$ for all $i\leq \lceil d/2\rceil$.
\end{theorem}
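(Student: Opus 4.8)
The statement has two layers of very unequal depth: the \emph{weak Lefschetz property}, i.e.\ the asserted injectivity of $\cdot\ell$ below the middle degree and surjectivity above it, and the \emph{dimension formula} $\dim_\R\big(R/(I_\Delta+(\Theta,\ell))\big)_i=g_i(\Delta)$. The second layer follows from the first in two lines using the Cohen--Macaulay property already recorded; all the content sits in the first. So the plan is: (1) derive the Lefschetz property from the known $g$-theorem, and (2) read off the dimension formula.

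\emph{Step (1).} Write $A:=R/(I_\Delta+(\Theta))$. The decisive input is that $A$ has the \emph{hard} Lefschetz property with respect to $\ell=\sum_{v=1}^n x_v$: for every $0\le i\le d/2$, multiplication by $\ell^{\,d-2i}$ is a bijection $A_i\to A_{d-i}$. I would invoke this as a black box. When $\Delta=\partial P$ is polytopal it is the $g$-theorem of McMullen \cite{McMullen96}, whose proof in the polytope/weight algebra exhibits $\ell$ as the distinguished Lefschetz element carried by $P$ itself, or of Stanley \cite{Stanley80}, whose toric proof realizes $A$ as the cohomology ring of the projective simplicial toric variety attached to $P$, with $\ell$ (up to a positive scalar) the ample/hyperplane class to which the classical hard Lefschetz theorem applies. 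When $\Delta$ is a simplicial $(d-1)$-sphere with a generic embedding it is the $g$-theorem for spheres \cite{Adiprasito-g-conjecture,KaruXiao,PapadakisPetrotou}. Granting hard Lefschetz, the weak form is formal: factoring $\ell^{\,d-2i}=\ell^{\,d-2i-1}\cdot\ell$, injectivity of $\cdot\ell^{\,d-2i}\colon A_i\to A_{d-i}$ forces injectivity of $\cdot\ell\colon A_i\to A_{i+1}$ whenever $d-2i\ge1$, i.e.\ $i\le\lceil d/2\rceil-1$; and since $A$ is Artinian Gorenstein with socle in degree $d$ (being the face ring of a homology sphere modulo an l.s.o.p.), the perfect pairings $A_j\times A_{d-j}\to A_d\cong\R$ turn surjectivity of $\cdot\ell\colon A_i\to A_{i+1}$ into injectivity of $\cdot\ell\colon A_{d-i-1}\to A_{d-i}$, which we have just established provided $d-i-1\le\lceil d/2\rceil-1$, i.e.\ $i\ge\lfloor d/2\rfloor$. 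These are precisely the two stated ranges.

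\emph{Step (2).} By Reisner's theorem \cite{Reisner} the l.s.o.p.\ $\Theta$ is a regular sequence on $R/I_\Delta$, so $\dim_\R A_j=h_j(\Delta)$ for all $j$, as recorded before the statement. For $1\le i\le\lceil d/2\rceil$ the injectivity of $\cdot\ell\colon A_{i-1}\to A_i$ (available since $i-1\le\lceil d/2\rceil-1$) gives the short exact sequence
\[
0\longrightarrow A_{i-1}\xrightarrow{\,\cdot\ell\,}A_i\longrightarrow\big(R/(I_\Delta+(\Theta,\ell))\big)_i\longrightarrow0,
\]
hence $\dim_\R\big(R/(I_\Delta+(\Theta,\ell))\big)_i=h_i(\Delta)-h_{i-1}(\Delta)=g_i(\Delta)$; the case $i=0$ is immediate.

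The sole serious obstacle is step (1): there is no elementary route to hard Lefschetz, and what is needed is exactly the full strength of the $g$-theorem --- algebro-geometric or combinatorial in the polytopal case, and Adiprasito's generic anisotropy (equivalently, the characteristic-two anisotropy of Papadakis--Petrotou combined with Karu--Xiao, transferred to $\R$ by lower semicontinuity of the ranks of the multiplication maps) in the general spherical case. The one point where a remark is required rather than a citation is that these results are typically phrased for a \emph{generic} Lefschetz element while here $\ell=\sum_v x_v$ is a fixed form; in the polytopal case this is literally (a positive multiple of) the ample class and nothing is needed, while for a generic embedding one observes that the Lefschetz property of $(A,\ell)$ depends only on the $d$-plane $\Span(\Theta)$ and on the image of $\ell$ in $R_1/\Span(\Theta)$, that for a generic embedding $\Span(\Theta)$ is a generic $d$-plane, and that the image of the fixed vector $\sum_v x_v$ in that quotient is a generic direction --- so the non-empty Zariski-open Lefschetz locus is met.
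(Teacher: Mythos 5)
The paper does not actually prove Theorem \ref{alg-g-thm}; it records it as a known result with citations (\cite{McMullen96,Stanley80} for polytopes, \cite{Adiprasito-g-conjecture,KaruXiao,PapadakisPetrotou} for spheres), so there is no ``paper's own proof'' to compare against in the strict sense. Your derivation is a correct, and essentially standard, way to unpack what those references supply. The formal steps are sound: factoring $\ell^{\,d-2i}=\ell^{\,d-2i-1}\cdot\ell$ converts hard Lefschetz bijectivity into injectivity of $\cdot\ell$ in the range $i\le\lceil d/2\rceil-1$; the Gorenstein pairing $A_j\times A_{d-j}\to A_d\cong\R$ (with $\langle\ell a,b\rangle=\langle a,\ell b\rangle$) dualizes that injectivity into the asserted surjectivity for $i\ge\lfloor d/2\rfloor$; and the short exact sequence $0\to A_{i-1}\to A_i\to \big(R/(I_\Delta+(\Theta,\ell))\big)_i\to 0$ together with $\dim_\R A_j=h_j(\Delta)$ (Reisner) gives the dimension formula in the range $i\le\lceil d/2\rceil$.

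The one place where you should be more careful, as you yourself flag, is the genericity of $\ell=\sum_v x_v$ in the sphere case. The way you phrase the Zariski-open argument suggests that $\Span(\Theta,\ell)$ is a generic $(d+1)$-plane in $R_1$, but it is not: it is constrained to lie in the subvariety of $(d+1)$-planes containing the fixed vector $\ell$, and the relevant ``Lefschetz locus'' must be shown non-empty and open inside that subvariety (or, better, one argues on pairs $(\Theta,\ell)$ with $\Theta$ varying and $\ell$ fixed). This is genuinely where the spherical $g$-theorem literature does real work. The cleanest route, and what the paper implicitly takes, is to cite a form of the theorem already stated with the specific $\ell=\sum_v x_v$ — e.g.\ \cite[Theorem 1.3]{KaruXiao}, which the authors explicitly point to — rather than re-deriving it from a ``generic Lefschetz element'' statement. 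If you keep your formulation, the genericity transfer should be stated with more care than ``the non-empty Zariski-open Lefschetz locus is met,'' making explicit that the openness and non-emptiness are within the locus of $(d+1)$-planes through $\ell$.
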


 Macaulay's theorem \cite[Theorem II.2.2]{Stanley96} provides a characterization of Hilbert functions of finitely generated standard graded $\R$-algebras. This characterization is given in terms of a certain function $a \mapsto a^{\langle i \rangle}$. To define it, note that for every pair $a, i$ of positive integers, there is a unique representation of $a$ in the form $a=\binom{a_i}{i}+ \binom{a_{i-1}}{i-1}+\cdots+\binom{a_j}{j}$, where $a_i>a_{i-1}>\cdots>a_j\geq j\geq 1$. The function $a \mapsto a^{\langle i \rangle}$ is then defined as follows: 
\[a^{\langle i \rangle}:=\binom{a_i+1}{i+1}+ \binom{a_{i-1}+1}{i}+\cdots+\binom{a_j+1}{j+1} \quad \mbox{and} \quad 0^{\langle i \rangle}:=0.\] Theorem \ref{alg-g-thm} along with Macaulay's theorem applied to $R/(I_\Delta+(\Theta,\ell))$ yields: 

\begin{theorem} \label{num-g-thm}
Let $\Delta$ be a simplicial $(d-1)$-sphere. Then $0\leq g_{i+1}(\Delta)\leq (g_i(\Delta))^{\langle i \rangle}$ for all $1\leq i\leq d/2-1$ and if $d$ is odd, then $g_{\lceil d/2\rceil}(\Delta)=0$.
\end{theorem}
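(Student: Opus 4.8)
The plan is to apply Macaulay's theorem to the standard graded $\R$-algebra $A:=R/\big(I_\Delta+(\Theta,\ell)\big)$, where $\Theta=\Theta(p)$ is the l.s.o.p.~determined by a generic $d$-embedding $p$ of $\Delta$ and $\ell=\sum_{v=1}^{n}x_v$ is the canonical Lefschetz element. First I would check that Macaulay's theorem indeed applies: $A$ is a quotient of the polynomial ring $R$ by a homogeneous ideal, hence a finitely generated standard graded $\R$-algebra, and since $\Theta$ is an l.s.o.p., the ring $R/\big(I_\Delta+(\Theta)\big)$ --- and therefore its further quotient $A$ --- is finite-dimensional over $\R$. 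Macaulay's theorem then gives $\dim_\R A_{i+1}\le(\dim_\R A_i)^{\langle i\rangle}$ for every $i\ge 1$, while $\dim_\R A_{i+1}\ge 0$ is automatic.

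Next I would invoke Theorem~\ref{alg-g-thm}, which yields $\dim_\R A_i=g_i(\Delta)$ for all $0\le i\le\lceil d/2\rceil$. Feeding this into the Macaulay inequality for $1\le i\le d/2-1$ --- so that both $i$ and $i+1$ lie in $[0,\lceil d/2\rceil]$ --- produces exactly $0\le g_{i+1}(\Delta)\le\big(g_i(\Delta)\big)^{\langle i\rangle}$.

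It remains to treat the vanishing $g_{\lceil d/2\rceil}(\Delta)=0$ for odd $d$, which Macaulay's bound alone does not detect. Here I would apply Theorem~\ref{alg-g-thm} to the ring $B:=R/\big(I_\Delta+(\Theta)\big)$ (without killing $\ell$): the multiplication map $\cdot\,\ell\colon B_{(d-1)/2}\to B_{(d+1)/2}$ is injective because $(d-1)/2\le\lceil d/2\rceil-1$ and surjective because $(d-1)/2=\lfloor d/2\rfloor$, hence it is an isomorphism. Since $\dim_\R B_i=h_i(\Delta)$, this gives $h_{(d-1)/2}(\Delta)=h_{(d+1)/2}(\Delta)$, and so $g_{(d+1)/2}(\Delta)=0$; equivalently, one may simply cite the Dehn--Sommerville relation $h_i=h_{d-i}$.

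I do not expect any genuine obstacle here: the substantive ingredient, namely the hard Lefschetz statement of Theorem~\ref{alg-g-thm}, is already in hand, and everything else is a direct application of Macaulay's theorem. The only points demanding a little care are verifying that the degrees at which Macaulay's inequality is invoked still lie in the range where $\dim_\R A_i$ coincides with $g_i(\Delta)$, and handling the odd-dimensional vanishing as a separate, short argument.
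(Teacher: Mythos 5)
Your proposal is correct and follows essentially the same route as the paper, which simply cites Theorem~\ref{alg-g-thm} together with Macaulay's theorem applied to $R/\big(I_\Delta+(\Theta,\ell)\big)$. The only place where you add detail the paper leaves implicit is the vanishing $g_{\lceil d/2\rceil}(\Delta)=0$ for odd $d$, which you correctly derive either from the Lefschetz isomorphism $\cdot\,\ell\colon B_{(d-1)/2}\to B_{(d+1)/2}$ (injective and surjective by Theorem~\ref{alg-g-thm} since $(d-1)/2=\lceil d/2\rceil-1=\lfloor d/2\rfloor$) or from Dehn--Sommerville.
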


The case of $g_{i+1}(\Delta)=0$ for some $i$ is of particular interest. A result of Murai and Nevo \cite[Theorem 1.3]{MuraiNevo2013} (along with the algebraic version of the $g$-theorem) asserts that for $0\leq i\leq d/2-1$, a simplicial $(d-1)$-sphere $\Delta$ is $i$-stacked if and only if $g_{i+1}(\Delta)=0$.

\subsection{Macaulay's inverse systems and affine stresses}

We start this subsection by defining Macaulay's inverse systems.
We mostly follow the notation used in \cite[Section 2]{ER2023}.
Recall that $R= \mathbb R[x_1,\dots,x_n]$.
Let $D=\mathbb R[y_1,\dots,y_n]$ be another polynomial ring with variables $y_1,\dots,y_n$ each of degree $-1$. 
We define an action of $R$ on $D$ by
\[
\begin{array}{cccc}
\circ : &R \times D &\to& D\\
& \mbox{\rotatebox{90}{$\in$}} & & \mbox{\rotatebox{90}{$\in$}}\\
& (f,g) & \mapsto & f(\partial_{y_1},\dots,\partial_{y_n}) \cdot g,
\end{array}
\]
where $\partial_{y_i}=\frac \partial {\partial y_i}$ is the partial derivative w.r.t.~$y_i$.
This action gives an $R$-module structure to $D$, and for each ideal $I \subset R$, the set
\[
I^\perp =\{F \in D :  g \circ F=0\ \mbox{ for all }g\in I\}
\]
becomes an $R$-submodule of $D$.
This submodule $I^\perp$ of $D$ is called the {\em inverse system of $I$}.
If $I$ is a homogeneous ideal, then $I^\perp$ is a graded $R$-module with $(I^\perp)_k= \{F \in D_k : I_{-k} \circ F=0\}$.

Inverse systems can be considered as a special case of the Matlis duality.
For a graded $R$-module $M$, its {\em Matlis dual} $M^\vee$ is the module whose $k$th graded component (for all $k\in \Z$) is $\mathrm{Hom}_{\mathbb R}(M_{-k},\mathbb R)$ and, for $f \in (M^\vee)_k$ and $g \in R$,
$gf$ is the composition $M_{-k-\deg g} \stackrel {\cdot g} \to M_{-k} \stackrel {f} \to \R$.
(See \cite[\S 3.6]{BHbook}).
It is known that $I^\perp$ is isomorphic to the Matlis dual of $R/I$.
To see this, observe that
for monomials $x^{\mathbf a}=x_1^{a_1}\cdots x_n^{a_n} \in R_k$ and $y^{\mathbf b}=y_1^{b_1} \cdots y_n^{b_n} \in D_{-k}$,
one has
\[	
x^{\mathbf a} \circ y^{\mathbf b}
=
\begin{cases}
b_1!  \cdots  b_n!
& \mbox{ if $a_k = b_k$ for all $k$},\\
0 & \mbox{ otherwise}.
\end{cases}
\]
This shows that the action $\circ$ induces a non-singular $\R$-bilinear pairing
\[
R_k \times D_{-k} \to \R,  \ \ (f,g) \mapsto f \circ g,
\]
which gives an isomorphism from $D_{-k}$ to $\mathrm{Hom}_{\R} (R_k,\R)$.
This isomorphism, in turn, induces a non-singular $\R$-bilinear pairing $(R/I)_k \times (I^\perp)_{-k} \to \mathbb R$
and an isomorphism from $(I^\perp)_{-k}$ to $\mathrm{Hom}_{\R} ((R/I)_k,\R) \cong (R/I)^\vee_{-k}$.
It is not difficult to see that this isomorphism is, in fact, an isomorphism of $R$-modules, and so $I^\perp \cong (R/I)^\vee$.
(A more algebraic exposition of this isomorphism is as follows: using the fact that the injective hull $E_R(R/\m)$ is isomorphic to $D$ as an $R$-module,
one obtains $I^\perp \cong \mathrm{Hom}_R(R/I,D)\cong \mathrm{Hom}_R(R/I,E_R(R/\m)) \cong (R/I)^\vee$.)
For more information on inverse systems,
see \cite{ER2017,ER2023} and references therein.

We now turn our discussion to affine stresses. Classical stresses were developed as part of the framework rigidity theory; see \cite{AsimowRothI,AsimowRothII}. Higher stresses were developed in works of Lee \cite{Lee96} and Tay, White, and Whiteley \cite{Tay-et-al-I,Tay-et-al}. Lee's definition can be (re)stated as follows. Let $\Delta$ be a simplicial complex (not necessarily of dimension $d-1$) with vertex set $[n]$, and let $p$ be a $d$-embedding of $\Delta$, i.e., a map $p:[n]\to\R^d$. As in Section 2.2, let $\theta_1,\ldots,\theta_d$ be the sequence of $d$ linear forms {\em determined by $p$}, that is, $\theta_i=\sum_{v\in [n]} p(v)_ix_v$. Denote this sequence by $\Theta(p)$ or simply by $\Theta$ (when $p$ is fixed), and let $\ell=\sum_{v\in[n]} x_v$. The space of {\em affine stresses} of $(\Delta,p)$ is defined as 
\[
\Stress(\Delta,p):=\left\{ f \in R \mid
g(\partial_{x_1},\dots,\partial_{x_n}) \cdot f = 0 \mbox{ for any } g(x_1,\dots,x_n) \in \big(I_\Delta+(\Theta(p),\ell)\big)\right\}
\]
and the space of {\em affine $i$-stresses} of $(\Delta,p)$, denoted $\Stress_i(\Delta,p)$, is the degree $i$ part of $\Stress(\Delta,p)$. In other words, an affine $i$-stress on $(\Delta,p)$ is a homogeneous polynomial $\lambda=\sum_\mu \lambda_\mu \mu\in R$ of degree $i$ such that
(a) every non-zero term $\lambda_\mu \mu$ of $\lambda$ is supported on a face of $\Delta$, i.e., $\supp(\mu):=\{k\in[n] : x_k | \mu\}\in \Delta$, 
(b) $\partial_{\theta_i}(\lambda)=0$ for all $i=1,\ldots,d$, and (c) $\partial_\ell(\lambda)=0$. 
Obviously, $\Stress(\Delta,p)$ is isomorphic to $(I_\Delta+(\Theta(p),\ell))^\perp$
with $\Stress_i(\Delta,p)\cong (I_\Delta+(\Theta(p),\ell))^\perp_{-i}$.

Abusing notation, we write $\lambda_F$ instead of $\lambda_\mu$ when $\mu$ is a squarefree monomial with $\supp(\mu)=F$ and refer to $\lambda_F$ as the {\em weight} assigned  to $F$ by $\lambda$. We also say that a face $G$ \emph{participates in} $\lambda$ if $\lambda_F\neq 0$ for some face $F\supseteq G$. By results of Lee \cite[Theorems 9, 11]{Lee96}, an affine $i$-stress $\lambda$ is uniquely determined by its squarefree part $(\lambda_F)_{F\in\Delta}$. In addition, the squarefree part of $\lambda$ has a particularly nice geometric interpretation \cite[Theorem 10]{Lee96}: the weights assigned to $(i-1)$-faces satisfy certain balancing conditions at $(i-2)$-faces. When $i=1$, the balancing condition at the empty face simply says that $(\lambda_v)_{v\in [n]}$ is an affine dependence of the point configuration $(p(v) : v\in [n])$. When $i=2$, the conditions are at vertices (we let $\mathbf{0}\in\R^d$ denote the origin): 
\begin{equation}\label{eq:balancing}\mbox{for every $u\in [n]$, } \sum_{v: uv\in\Delta} \lambda_{uv}(p(u)-p(v))=\mathbf{0};\end{equation}
 furthermore, for every $u\in [n]$, the coefficient of $x_u^2$ in a $2$-stress $\lambda$ is given by $-\frac{1}{2}\sum_{w:uw\in\Delta}\lambda_{uw}$.

To close this subsection, we restate the last part of Theorem \ref{alg-g-thm} in the language of stresses.

\begin{theorem} \label{g-thm-stresses}
Let $(\Delta,p)$ be either the boundary complex of a simplicial $d$-polytope with its natural embedding or a simplicial $(d-1)$-sphere with a generic embedding. Then for all $i\leq \lceil d/2\rceil$, $\dim_\R \Stress_i(\Delta,p)=g_i(\Delta)$.
\end{theorem}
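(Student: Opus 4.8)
The plan is to translate the final assertion of Theorem \ref{alg-g-thm} through the duality machinery already set up in this section, so that essentially no new content is needed. Recall that, by definition, $\Stress_i(\Delta,p)$ is the degree $i$ component of $\Stress(\Delta,p)$, and that, as noted immediately after the definition of affine stresses, there is an isomorphism of graded $R$-modules $\Stress(\Delta,p)\cong \big(I_\Delta+(\Theta(p),\ell)\big)^\perp$ under which $\Stress_i(\Delta,p)\cong \big(I_\Delta+(\Theta(p),\ell)\big)^\perp_{-i}$. Writing $J:=I_\Delta+(\Theta(p),\ell)$ and $A:=R/J$, the discussion of Matlis duality above gives $J^\perp\cong A^\vee$ as graded $R$-modules, and hence $(J^\perp)_{-i}\cong (A^\vee)_{-i}=\mathrm{Hom}_\R(A_i,\R)$.

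The first step is to record that $A=R/J$ is a finite-dimensional $\R$-vector space. Indeed, since $(\Delta,p)$ is the boundary complex of a simplicial $d$-polytope with its natural embedding or a simplicial $(d-1)$-sphere with a generic embedding, $\Theta=\Theta(p)$ is the l.s.o.p.~of $R/I_\Delta$ determined by $p$, so already $R/\big(I_\Delta+(\Theta)\big)$ is finite-dimensional, and $A$ is a further quotient of it. In particular each graded piece $A_i$ is finite-dimensional, which is exactly what is needed to conclude $\dim_\R \mathrm{Hom}_\R(A_i,\R)=\dim_\R A_i$. Combining this with the isomorphisms above yields the chain
\[\dim_\R \Stress_i(\Delta,p)=\dim_\R (J^\perp)_{-i}=\dim_\R \mathrm{Hom}_\R(A_i,\R)=\dim_\R A_i=\dim_\R \big(R/(I_\Delta+(\Theta,\ell))\big)_i.\]

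The final step is to invoke Theorem \ref{alg-g-thm}: under the present hypotheses on $(\Delta,p)$, its last assertion states that $\dim_\R\big(R/(I_\Delta+(\Theta,\ell))\big)_i=g_i(\Delta)$ for all $i\leq \lceil d/2\rceil$. Substituting this into the displayed chain of equalities gives $\dim_\R \Stress_i(\Delta,p)=g_i(\Delta)$ in the stated range, completing the proof. There is no real obstacle here, as the theorem is a reformulation rather than a new result; the only points requiring a modicum of care are that the identification $(J^\perp)_{-i}\cong \mathrm{Hom}_\R(A_i,\R)$ should be cited from the Matlis-duality discussion rather than re-derived, and that finite-dimensionality of $A_i$ (guaranteed because $\Theta(p)$ is an l.s.o.p.) is precisely what licenses passing from $\dim_\R\mathrm{Hom}_\R(A_i,\R)$ back to $\dim_\R A_i$.
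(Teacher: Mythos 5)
Your argument is correct and matches the paper's intent: Theorem \ref{g-thm-stresses} is simply Theorem \ref{alg-g-thm} translated through the identifications $\Stress_i(\Delta,p)\cong (J^\perp)_{-i}\cong \mathrm{Hom}_\R\big((R/J)_i,\R\big)$ with $J=I_\Delta+(\Theta(p),\ell)$, exactly as you write. The only minor overstatement is that finite-dimensionality of $A_i$ is automatic (each $R_i$ is already finite-dimensional), not something that needs $\Theta(p)$ to be an l.s.o.p.; otherwise the proof is clean and complete.
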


\subsection{Generators, socles and graded Betti numbers}
In this subsection we discuss several other commutative algebra definitions we will need. Denote by $\m=\m_R:=(x_1,\ldots,x_n)$ the {\em maximal ideal} of $R$ (also known as the {\em irrelevant ideal} of $R$).
For a finitely generated graded $R$-module $M$, let 
$$\mu_k(M)=\dim_{\mathbb R} (M/\m_R M)_k.$$
In other words, $\mu_k(M)$ is the number of degree $k$ elements in a minimal generating set of $M$.
We also introduce a quantity dual to $\mu_k(M)$.
The {\em socle} of a graded $R$-module $M$ is \[
\soc^R M= 0:_M \m =\{y\in M : \m y=0\}.
\]
Let $r_k(M)=\dim_{\mathbb R} (\soc^R M)_k$.
The numbers $\mu_k(-)$ and $r_k(-)$ are dual to each other in the sense that
\begin{align}
\label{2-0}
r_k(M)=\mu_{-k}(M^\vee);
\end{align}
see, for instance, \cite[Proposition 3.2.12(d)]{BHbook}.
We will, in fact, use a set of algebraic invariants that contains both $\mu_k(-)$ and $r_k(-)$.
For a finitely generated graded $R$-module $M$,
the number
\[\beta_{i,j}^R(M)=\dim_\R \big(\tor^R_i(M,\R)\big)_j 
\]
is called the {\em $(i,j)$th graded Betti number (over $R$)};
here we identify $\R$ with $R/\m$.
Alternatively, by \cite[Proposition A.2.2]{HHbook}, the
graded Betti numbers of $M$ can be defined as the exponents appearing in the minimal free $R$-resolution $F_\bullet$ of $M$ where the indexing is chosen as follows:
\[
\textstyle
F_\bullet:
\cdots \longrightarrow
\bigoplus_{j \in \mathbb Z} R(-j)^{\beta^R_{1,j}(M)} \longrightarrow
\bigoplus_{j \in \mathbb Z} R(-j)^{\beta^R_{0,j}(M)} \longrightarrow M \longrightarrow 0.
\]
Here $R(-j)$ is the graded module $R$ with grading shifted by degree $-j$.

The following lemma summarizes a few basic properties of graded Betti numbers.

\begin{lemma} \label{lem:basic-prop-of-Betti}
Let $M$ be a finitely generated graded $R$-module and let $I$ be a homogeneous ideal of $R$.
Then
\begin{itemize}
\item[(1)] $\beta^R_{0,k}(M)=\mu_k(M)$.
\item[(2)] $\beta^R_{n,n+k}(M)=r_k(M)$.
\item[(3)] $\beta^R_{i,j}(I)=\beta^R_{i+1,j}(R/I)$.
\end{itemize}
\end{lemma}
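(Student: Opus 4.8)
The plan is to prove all three parts directly from the definition $\beta^R_{i,j}(M)=\dim_\R\big(\tor^R_i(M,\R)\big)_j$, in increasing order of (modest) difficulty, using only standard homological tools (a change-of-factor in $\tor$, the Koszul resolution of $R/\m$, and a long exact sequence). Part (1) is a matter of unwinding definitions: $\tor^R_0(M,\R)=M\otimes_R(R/\m)=M/\m M$, so the degree $k$ strand of $\tor^R_0(M,\R)$ is $(M/\m M)_k$, whose dimension is $\mu_k(M)$ by definition. Nothing further is required.

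\textbf{Part (2).} I would use the symmetry of $\tor$ to write $\tor^R_n(M,\R)\cong\tor^R_n(\R,M)$ and resolve $\R=R/\m$ by the Koszul complex $K_\bullet=K_\bullet(x_1,\dots,x_n;R)$, with the internal grading chosen so that a basis element $e_{j_1}\wedge\cdots\wedge e_{j_i}$ of $K_i$ has degree $i$; then $K_i\cong R(-i)^{\binom ni}$ and $K_\bullet$ is a (minimal) graded free resolution of $\R$. Since $K_{n+1}=0$, the module $\tor^R_n(M,\R)=H_n(K_\bullet\otimes_R M)$ equals the kernel of $\partial\colon K_n\otimes_R M\to K_{n-1}\otimes_R M$. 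Identifying $K_n\otimes_R M$ with $M(-n)$ and observing that $\partial$ sends the generator in internal degree $n+k$ corresponding to $m\in M_k$ to $\sum_{i=1}^n(-1)^{i-1}x_i m\otimes(e_1\wedge\cdots\wedge\widehat{e_i}\wedge\cdots\wedge e_n)$, one sees that this kernel consists exactly of the $m$ with $x_i m=0$ for all $i$, i.e.\ of $\soc^R M$, sitting in internal degree $n+k$. Hence $\beta^R_{n,n+k}(M)=\dim_\R(\soc^R M)_k=r_k(M)$. (An alternative route combines $r_k(M)=\mu_{-k}(M^\vee)$, i.e.\ \eqref{2-0}, with part (1) and graded local duality relating the tail of a minimal resolution of $M$ to the head of one of $M^\vee$; the Koszul computation has the advantage of avoiding that machinery.)

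\textbf{Part (3).} I would take the short exact sequence $0\to I\to R\to R/I\to 0$, with $I$ a proper homogeneous ideal so that $I\subseteq\m$, and apply the long exact sequence for $\tor^R_\bullet(-,\R)$. Since $R$ is free, $\tor^R_i(R,\R)=0$ for all $i\geq 1$, and this at once gives $\tor^R_{i+1}(R/I,\R)\cong\tor^R_i(I,\R)$ for every $i\geq 1$. The only case needing a second look is $i=0$, where the sequence reads
\[
0\longrightarrow\tor^R_1(R/I,\R)\longrightarrow I\otimes_R\R\xrightarrow{\ \phi\ } R\otimes_R\R\longrightarrow (R/I)\otimes_R\R\longrightarrow 0 ,
\]
and the rightmost map is the identity $\R\to R/(I+\m)=\R$ (this is exactly where $I\subseteq\m$ is used), so $\phi=0$ and $\tor^R_1(R/I,\R)\cong I\otimes_R\R=\tor^R_0(I,\R)$. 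All of these isomorphisms are degree-preserving, which yields $\beta^R_{i,j}(I)=\beta^R_{i+1,j}(R/I)$ for all $i\geq 0$.

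\textbf{Main difficulty.} Honestly, none of the three parts presents a genuine obstacle; the lemma is standard and each step is short. The only points that deserve care are the choice of internal grading on the Koszul complex in (2), so that the shift emerges as the asserted correspondence $n+k\leftrightarrow k$, and the accurate reading of the low-degree terms of the long exact sequence in (3), which is where the (harmless) hypothesis $I\subseteq\m$ enters.
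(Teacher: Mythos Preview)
Your proof is correct. For part~(2) you take exactly the paper's route: compute $\tor^R_n(M,\R)$ as the top Koszul homology $H_n(\mathbf{x},M)$ and identify it with $\soc^R(M)$ shifted by $n$. For parts~(1) and~(3) the paper simply says both are ``immediate consequences of the definition of graded Betti numbers in terms of minimal free resolutions'' (for~(3): the minimal free resolution of $R/I$ begins with $R\to R/I$, so deleting this term yields a minimal free resolution of $I$ shifted by one homological degree), whereas you work from the $\tor$ definition via $\tor_0=M/\m M$ and the long exact sequence for $0\to I\to R\to R/I\to 0$. Since the paper itself presents these two descriptions of $\beta^R_{i,j}$ as equivalent, the difference is purely cosmetic.
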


Statements (1) and (3) are immediate consequences of the definition of graded Betti numbers in terms of minimal free resolutions.
Statement (2) follows from the fact that $\tor^R_i(M,\R)$ is isomorphic to the $i$th Koszul homology $H_i(\mathbf x,M)$ of $\mathbf {x}=x_1,\dots,x_n$ with coefficients in $M$ (see \cite[Corollary A.3.5]{HHbook})
and the fact that
$H_n(\mathbf x,M)$ is isomorphic to $\soc^R(M)$ with degrees shifted by $n$ (see \cite[p.\ 268]{HHbook}).

\begin{example} To illustrate Lemma \ref{lem:basic-prop-of-Betti} and other notions introduced in this section, consider $R=\R[x]$ and $M=R/(x^3)$. Then $\soc^R M=M_2$, and so $r_2(M)=1$ while $r_j(M)=0$ for all $j \neq 2$. Also, $M$ is generated by any nonzero element of $M_0$; hence $\mu_0(M)=1$ while $\mu_j(M)=0$ for all $j\neq 0$. 
Finally, $n=1$ and the minimal free $R$-resolution of $M$ is given by $0\to R(-3)^1 \to R^1 \to M\to 0$. Thus, $\beta_{0,0}(M)=1=\mu_0(M)$, $\beta_{1,3}(M)=1=r_2(M)$, and the rest of the Betti numbers are zeros. 
\end{example}

In the proof of Theorem \ref{thm: Stress_i to Stress_j}, we will need to compute the dimensions of graded components of $\soc^R(R/(I_\Delta+(\Theta(p),\ell)))$. To do so, in addition to Lemma \ref{lem:basic-prop-of-Betti}, we will rely on the following results.

Let $\Delta$ be a simplicial complex with $n$ vertices. The number $m_k(\Delta)$ is one of the graded Betti numbers of the Stanley--Reisner ring $R/I_\Delta$.
Indeed,
missing $k$-faces of $\Delta$ correspond to degree $k+1$ elements of the minimal monomial generators of $I_\Delta$, and so $m_k(\Delta)=\beta_{0,k+1}^R(I_\Delta)$. Thus,
\begin{align}
\label{2-1}
m_k(\Delta)=\beta_{1,k+1}^R(R/I_\Delta) \ \ \mbox{ for all $k$.}
\end{align}

Recall that $f \in R_k$ is a {\em non-zero-divisor} on a graded $R$-module $M$ if $fg \ne 0$ for any $0 \ne g \in M$; equivalently,
if the multiplication map $\cdot f : M_i \to M_{i+k}$ is injective for all $i$.
The following lemma asserts that the graded Betti numbers do not change when we pass to the quotient by a non-zero-divisor. (In fact, below we will prove a more general result.)

\begin{lemma} 
\label{lem:nonzerodiv}
Let $M$ be a finitely generated graded $R$-module,
$f \in R_1$ a linear form, and $\overline R=R/f R$.
If $f$ is a non-zero-divisor on $M$, then
\[
\beta_{i,j}^{\overline R}(M/f M) =
\beta_{i,j}^R(M)\ \ \ \mbox{ for all }i,j.
\]
\end{lemma}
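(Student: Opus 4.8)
The plan is to prove the more general statement (as the parenthetical remark promises) that the minimal free resolution is \emph{preserved under base change along a non-zero-divisor}, and then specialize to the linear form $f$ and $\overline R = R/fR$. The cleanest route is to take a minimal free resolution $F_\bullet \to M$ over $R$ and show that $F_\bullet \otimes_R \overline R \to M/fM$ is a minimal free resolution over $\overline R$. Minimality is automatic: the differentials of $F_\bullet$ have entries in $\m_R$, so the differentials of $F_\bullet \otimes_R \overline R$ have entries in $\m_{\overline R}$. The only real content is \emph{exactness} of $F_\bullet \otimes_R \overline R$, i.e.\ that $\tor^R_i(M,\overline R) = 0$ for $i > 0$. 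Granting this, $F_\bullet \otimes_R \overline R$ is by definition a minimal free $\overline R$-resolution of $M \otimes_R \overline R = M/fM$, and comparing the twists of the free modules gives $\beta^{\overline R}_{i,j}(M/fM) = \beta^R_{i,j}(M)$ for all $i,j$, which is exactly the claim.

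The key step is therefore the vanishing $\tor^R_i(M,\overline R)=0$ for $i\geq 1$. I would prove this directly from the Koszul-type resolution of $\overline R = R/fR$ over $R$: since $f$ is (trivially) a non-zero-divisor on $R$, the complex $0 \to R(-1) \xrightarrow{\cdot f} R \to \overline R \to 0$ is a free $R$-resolution of $\overline R$. Tensoring with $M$ and taking homology, $\tor^R_i(M,\overline R)$ is the homology of $0 \to M(-1) \xrightarrow{\cdot f} M \to 0$; this is $0$ in homological degree $\geq 1$ because $i\ge 2$ spots are zero and in degree $1$ the homology is $\ker(\cdot f\colon M(-1)\to M)$, which vanishes precisely because $f$ is a non-zero-divisor on $M$. (In homological degree $0$ one recovers $M/fM$, confirming the identification of the tensor product.) This is the place where the hypothesis on $f$ is used, and it is the crux of the argument, though it is quite short.

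With this in hand I would assemble the proof as follows. First, fix a minimal graded free $R$-resolution $F_\bullet \to M \to 0$, with $F_i = \bigoplus_j R(-j)^{\beta^R_{i,j}(M)}$ and differentials with entries in $\m_R$. Second, apply $-\otimes_R \overline R$; by the $\tor$ vanishing just established, the resulting complex $F_\bullet \otimes_R \overline R$ is exact except at the right end, where its cokernel is $M \otimes_R \overline R \cong M/fM$; thus it is a free $\overline R$-resolution of $M/fM$. Third, observe minimality is inherited since the differential entries land in $\m_R/(f)= \m_{\overline R}$. Fourth, read off $F_i \otimes_R \overline R = \bigoplus_j \overline R(-j)^{\beta^R_{i,j}(M)}$ and, by uniqueness of minimal free resolutions, conclude $\beta^{\overline R}_{i,j}(M/fM) = \beta^R_{i,j}(M)$ for all $i,j$.

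The step I expect to be the (very mild) main obstacle is purely expository rather than mathematical: stating cleanly why the tensored complex computes $\tor$ and why its higher homology vanishing plus the minimality observation together force the Betti numbers to agree, without belaboring the standard homological-algebra folklore. There is no genuine difficulty here; the argument is a textbook application of the rigidity of minimal free resolutions (cf.\ \cite[\S A.2]{HHbook}) combined with the two-term Koszul resolution of $R/fR$. If one prefers to avoid invoking uniqueness of minimal resolutions explicitly, one can instead argue that $\tor^{\overline R}_i(M/fM, \R) \cong \tor^R_i(M,\R)$ as graded vector spaces via a change-of-rings spectral sequence (which degenerates because $\tor^R_{>0}(M,\overline R)=0$), and then read off the Betti numbers from their definition as dimensions of graded $\tor$; either packaging yields the stated equality.
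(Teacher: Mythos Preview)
Your argument is correct and is in fact the standard textbook proof of this fact: tensor a minimal free $R$-resolution of $M$ by $\overline R$, check that exactness survives because $\tor^R_{>0}(M,\overline R)=0$ (via the two-term resolution $0\to R(-1)\xrightarrow{\cdot f} R\to \overline R\to 0$ and the hypothesis that $f$ is a non-zero-divisor on $M$), observe that minimality is inherited, and read off the Betti numbers. There is nothing missing.

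The paper, however, takes a different route. It does not prove Lemma~\ref{lem:nonzerodiv} directly at all; instead it states it and immediately passes to the more general Lemma~\ref{lem:MN}, which handles the case where multiplication by $f$ is only injective in degrees $\leq s$. That lemma is extracted from a long exact sequence of Migliore--Nagel relating $\tor^R_i(M,\R)$, $\tor^{R/fR}_i(M/fM,\R)$, and $\tor^R_{i-1}((0:_M f),\R)$; when $f$ is a genuine non-zero-divisor one has $(0:_M f)=0$, the outer terms vanish, and Lemma~\ref{lem:nonzerodiv} drops out. What the paper's approach buys is exactly the generality it later needs: in the applications the Lefschetz element $\ell$ is only a non-zero-divisor in low degrees, so the partial result (equalities for $k<s$, an inequality for $k=s$) is essential. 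Your argument is cleaner and more self-contained for the stated lemma, but it does not obviously adapt to the partial-injectivity setting, whereas the long exact sequence does so for free.
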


We note that $\overline R$ is isomorphic to the polynomial ring with $n-1$ variables.
Hence the left-hand side of the above equation can be considered as a graded Betti number of a module over a polynomial ring with one less variable.
Lemma \ref{lem:nonzerodiv} in particular implies that
if $R/I_\Delta$ is Cohen--Macaulay and $\Theta$ is its l.s.o.p., then
\begin{equation} \label{eq:*}
\beta_{i,j}^{R/(\Theta) R}\big(R/\big(I_{\Delta}+(\Theta)\big)\big) = \beta_{i,j}^R(R/I_{\Delta})
\ \ \mbox{ for all }i,j.
\end{equation}
This result will be used in the next section.

In fact, we will need a strengthening of Lemma \ref{lem:nonzerodiv}.
One may notice that the canonical Lefschetz element $\ell=\sum_{v=1}^n x_v$ of Theorem \ref{alg-g-thm} behaves like
a non-zero-divisor up to degree $i \leq \lceil \frac d 2 \rceil -1$. It turns out that the injectivity of multiplication by $\ell$ in certain (rather than in all) degrees is enough to get a good handle on how the graded Betti numbers change when we consider the quotient by $\ell$. This theory was developed in \cite{MiglioreNagel} and applied to the study of missing faces of simplicial polytopes in \cite{Nagel}. We do not need the full strength of the result established in \cite{MiglioreNagel}; instead, we summarize in Lemma \ref{lem:MN} parts that will be used in this paper.

For a graded $R$-module $M$ and $f \in R_1$,
let $0:_M f= \{ g \in M:  fg=0\}$.
It follows from \cite[Lemma 8.3]{MiglioreNagel} that there is an exact sequence
\begin{align*}
\cdots \longrightarrow
\tor_{i-1}^R\big((0:_M f),\R\big)_{j-1}
\longrightarrow
\tor_i^R(M,\R)_j
\longrightarrow
\tor_i^{R/f R}(M/f M,\R)_j
\longrightarrow \cdots\\
\longrightarrow
\tor_{0}^R\big((0:_M f),\R\big)_{j-1}
\longrightarrow
\tor_1^R(M,\R)_j
\longrightarrow
\tor_1^{R/f R}(M/f M,\R)_j\longrightarrow 0.
\end{align*}
If the multiplication map $\cdot f :M_k \to M_{k+1}$ is injective when $k \leq s$,
then $(0:_M f)_k=0$ for $k \leq s$,
which implies that $\tor_i^R((0:_M f),\R)_{i+j}=0$ for all $i$ and $j \leq s$ (see \cite[Proposition 12.3]{Peevabook}).
The above long exact sequence then yields the following statement.

\begin{lemma}
\label{lem:MN}
Let $M$ be a finitely generated graded $R$-module and $f\in R_1$.
Suppose that the multiplication map $\cdot f: M_k \to M_{k+1}$ is injective for $k \leq s$. Then for all $i$,
\begin{itemize}
\item[(1)] $\beta_{i,i+k}^{R/f R}(M/fM) =\beta_{i,i+k}^R(M)$ if $k<s$, and  
\item[(2)] $\beta_{i,i+s}^{R/f R}(M/f M) \geq \beta_{i,i+s}^R(M)$.
\end{itemize}
\end{lemma}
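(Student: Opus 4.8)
The statement to prove is Lemma~\ref{lem:MN}, which the excerpt has already nearly set up. Here is how I would organize the proof.

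\textbf{Setup and strategy.} The plan is to exploit the long exact sequence quoted just before the lemma statement, namely the sequence in $\tor$ relating $M$, $M/fM$, and $N:=(0:_M f)$, which follows from \cite[Lemma 8.3]{MiglioreNagel}. The key observation is that the hypothesis ``$\cdot f: M_k\to M_{k+1}$ is injective for $k\leq s$'' immediately forces $N_k=0$ for all $k\leq s$: if $g\in N_k$ with $k\le s$, then $fg=0$, and injectivity of $\cdot f$ in degree $k$ gives $g=0$. So $N$ is a graded module whose nonzero graded components all live in degrees $\geq s+1$. The whole proof then reduces to translating this vanishing into vanishing of the relevant $\tor$-groups of $N$, and reading off the consequences from the long exact sequence.

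\textbf{Main step: bounding $\tor_i^R(N,\R)_{i+j}$.} I would invoke the standard fact (cited in the excerpt as \cite[Proposition 12.3]{Peevabook}, or provable directly from the minimal graded free resolution of $N$) that if a finitely generated graded $R$-module $N$ is generated in degrees $\geq s+1$, then more is true: its minimal free resolution has $i$th term generated in degrees $\geq s+1+i$, so that $\tor_i^R(N,\R)_{m}=0$ whenever $m\leq s+i$, equivalently $\tor_i^R(N,\R)_{i+j}=0$ for all $j\leq s$. (One sees this by induction on $i$: the degree-$0$ syzygy module sits inside a free module generated in degrees $\geq s+1+i$, hence is itself generated in degrees $\geq s+2+i$.) Applying this with $N=(0:_M f)$, which we showed is zero in degrees $\leq s$ hence generated in degrees $\geq s+1$, we get $\tor_i^R\big((0:_M f),\R\big)_{i+j}=0$ for all $i$ and all $j\leq s$.

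\textbf{Reading off the conclusions.} Now fix $i$ and consider the piece of the long exact sequence
\[
\tor_{i}^R\big((0:_M f),\R\big)_{j-1}\to \tor_i^R(M,\R)_j\to \tor_i^{R/fR}(M/fM,\R)_j\to \tor_{i-1}^R\big((0:_M f),\R\big)_{j-1}.
\]
Set $j=i+k$. The outer terms are $\tor_{i}^R((0:_M f),\R)_{i+(k-1)}$ and $\tor_{i-1}^R((0:_M f),\R)_{(i-1)+k}$. If $k<s$, then $k-1\leq s$ and $k\leq s$, so by the vanishing just established both outer terms are zero, and the middle map $\tor_i^R(M,\R)_{i+k}\to \tor_i^{R/fR}(M/fM,\R)_{i+k}$ is an isomorphism; taking dimensions gives part~(1). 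If $k=s$, then $k-1=s-1\leq s$ so the left outer term still vanishes, giving an injection $\tor_i^R(M,\R)_{i+s}\hookrightarrow \tor_i^{R/fR}(M/fM,\R)_{i+s}$, hence the inequality $\beta_{i,i+s}^{R/fR}(M/fM)\geq \beta_{i,i+s}^R(M)$ of part~(2). I would also need to double-check the very low-index and terminal segments of the long exact sequence (the $i=0,1$ tail displayed in the excerpt) to confirm the argument goes through there verbatim, which it does since the same degree bookkeeping applies.

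\textbf{Expected obstacle.} There is no deep obstacle here; the content is entirely in the bookkeeping. The one point requiring care is the sharp form of the $\tor$-vanishing for a module generated in high degree — one must use that generation in degrees $\geq s+1$ propagates through the resolution as the stated bound $\tor_i^R(N,\R)_{m}=0$ for $m\le s+i$, not merely $\tor_0$-vanishing — and then to match the degree shifts $j\mapsto j-1$ in the long exact sequence against the index $j=i+k$ correctly so that the $k<s$ versus $k=s$ dichotomy comes out exactly as stated. Getting the inequalities to point the right way in part~(2) is the only place a sign/direction slip could occur.
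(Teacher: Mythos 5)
Your proof takes exactly the same approach as the paper's (which gives the argument in the paragraph immediately preceding the lemma statement), and your final conclusions are correct. However, the segment of the long exact sequence that you display has an off-by-one error in the homological indices of the $(0:_M f)$-terms: the paper's sequence reads
\[
\tor_{i-1}^R(N,\R)_{j-1}\longrightarrow\tor_i^R(M,\R)_j\longrightarrow\tor_i^{R/fR}(M/fM,\R)_j\longrightarrow\tor_{i-2}^R(N,\R)_{j-1},
\]
with $N=(0:_M f)$, not $\tor_i^R(N,\R)_{j-1}$ and $\tor_{i-1}^R(N,\R)_{j-1}$ on the two ends as you wrote. This is not merely cosmetic: if your indexing were right, then for $k=s$ the right-hand $N$-term would sit in relative degree $s\le s$ and therefore vanish as well, which would upgrade part~(2) to an equality --- but the paper exhibits in eq.~\eqref{exeq2} a case where the inequality of part~(2) is strict. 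With the correct indices the right outer term at $k=s$ sits in relative degree $k+1=s+1$, which is precisely why only the one-sided injection, hence the inequality, survives. Your stated conclusion for part~(2) is rescued only because you were conservative and used just the left-side vanishing at $k=s$; correcting the indices makes it clear why equality cannot be expected in general.
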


In particular, if $\Delta$ and $\Theta$ satisfy the assumptions of Theorem \ref{alg-g-thm}, then Lemma \ref{lem:MN} implies that for all $i$,
\begin{eqnarray}
\label{2-2}
\beta_{i,i+k}^{R/(\Theta,\ell)R}\big(R/\big(I_\Delta+(\Theta,\ell)\big)\big) &=&\beta_{i,i+k}^{R/(\Theta)R}\big(R/\big(I_{\Delta}+(\Theta)\big)\big) \ \ \textstyle
\mbox{ for }k <  \lceil \frac d 2 \rceil -1, \mbox{ and}\\
\label{2-3}
\beta_{i,i+\lceil \frac d 2 \rceil -1}^{R/(\Theta,\ell)R}\big(R/\big(I_\Delta+(\Theta,\ell)\big)\big) &\geq& \beta_{i,i+\lceil \frac d 2 \rceil -1}^{R/(\Theta)R}\big(R/\big(I_{\Delta}+(\Theta)\big)\big).
\end{eqnarray}

Another ingredient we will need is the following duality of the graded Betti numbers of the Stanley--Reisner rings of spheres.

\begin{lemma}
\label{lem:symmetry}
If $\Delta$ is a simplicial $(d-1)$-sphere with $n$ vertices, then
\[
\beta_{i,j}^R(R/I_\Delta)=\beta_{n-d-i,n-j}^R(R/I_\Delta) \mbox{ for all $i,j$}.\]
\end{lemma}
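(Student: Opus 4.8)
The plan is to deduce Lemma~\ref{lem:symmetry} from the fact that the Stanley--Reisner ring of a simplicial $(d-1)$-sphere is Gorenstein, together with the self-duality of the minimal free resolution of a graded Gorenstein ring. First I would recall that $R/I_\Delta$ is Cohen--Macaulay of Krull dimension $d$ (by Reisner's criterion, as quoted in Section~2.2), and that in fact $\Delta$ being a simplicial (or homology) sphere makes $R/I_\Delta$ a \emph{Gorenstein} ring: indeed the canonical module of $R/I_\Delta$ is isomorphic to $R/I_\Delta$ itself up to a degree shift, by Stanley's characterization of Gorenstein Stanley--Reisner rings (a Cohen--Macaulay complex is Gorenstein exactly when its links have the homology prescribed in the definition of a homology sphere, possibly after removing the cone points --- and spheres have no cone points). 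The projective dimension of $R/I_\Delta$ over $R$ is $n-d$ by the Auslander--Buchsbaum formula, since $\depth(R/I_\Delta)=\dim(R/I_\Delta)=d$.

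Next I would invoke the structure theorem for the minimal free resolution of a graded Gorenstein quotient: if $F_\bullet \to R/I_\Delta \to 0$ is the minimal free resolution, of length $c:=n-d$, then $F_\bullet$ is self-dual, i.e. $\operatorname{Hom}_R(F_\bullet, R)$ is, up to a shift, again a minimal free resolution of $R/I_\Delta$. Concretely, $F_{c-i} \cong \operatorname{Hom}_R(F_i, R)(-a)$ for the integer $a$ equal to the $a$-invariant shift; since $R/I_\Delta$ has Hilbert series $\big(\sum h_i(\Delta)t^i\big)/(1-t)^d$ with $h_d(\Delta)=h_0(\Delta)=1$ (Dehn--Sommerville / Poincar\'e duality for spheres), the last free module $F_c$ in the resolution is $R(-n)$, i.e. $\beta^R_{c,n}(R/I_\Delta)=1$ and this is the only nonzero $\beta^R_{c,j}$. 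Dualizing a term $R(-j)^{\beta}$ gives $R(j)^{\beta}$, and the self-duality $F_i^\vee(-n) \cong F_{c-i}$ then translates precisely into the numerical identity
\[
\beta^R_{i,j}(R/I_\Delta)=\beta^R_{c-i,\,n-j}(R/I_\Delta)=\beta^R_{n-d-i,\,n-j}(R/I_\Delta)\quad\text{for all }i,j,
\]
which is the claim. I would cite \cite[Theorem 3.3.7 and Corollary 3.3.9]{BHbook} (or \cite[Theorem~21.15]{Eisenbud}) for the self-duality of Gorenstein resolutions and for the identification of the shift.

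The one point requiring a little care --- and the step I expect to be the main (minor) obstacle --- is pinning down the grading shift in the self-duality so that it is exactly $n$ rather than some other integer; this amounts to checking that the graded canonical module $\omega_{R/I_\Delta}$ is $R/I_\Delta$ placed in the correct internal degree, equivalently that $\soc$ of the Artinian reduction sits in degree $d$. This follows from the Dehn--Sommerville relations $h_i(\Delta)=h_{d-i}(\Delta)$ for spheres, which give $h_d(\Delta)=1$, so that the Artinian reduction $R/(I_\Delta+\Theta)$ has top nonzero degree $d$ and one-dimensional socle there; the $a$-invariant of $R/I_\Delta$ is thus $d-d = 0$ in the convention where $\omega_R = R(-n)$, yielding the shift $n$ above. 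Alternatively, one can bypass the Gorenstein machinery entirely and argue combinatorially via Hochster's formula, $\beta^R_{i,j}(R/I_\Delta)=\sum_{W\subseteq V,\,|W|=j}\dim_{\R}\widetilde H_{j-i-1}(\Delta_W;\R)$, combined with combinatorial Alexander duality $\widetilde H_{j-i-1}(\Delta_W;\R)\cong \widetilde H^{|W|-j+i-2}(\Delta_W^\vee;\R)$ and the fact that for a sphere $\Delta_{V\setminus W}$ and $\Delta_W^\vee$ have isomorphic (co)homology in the relevant degrees; I would likely present the Gorenstein-resolution proof as the clean one and mention Hochster's formula as the combinatorial alternative.
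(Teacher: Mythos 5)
Your argument is essentially the paper's own proof, just spelled out in more detail: both deduce the symmetry from $R/I_\Delta$ being Gorenstein of Krull dimension $d$, use that the minimal free resolution dualizes (via \cite[Corollary 3.3.9]{BHbook}) to a minimal free resolution of the canonical module, identify that module with $R/I_\Delta$ and pin down the shift as $n$, and even close with the same alternative route via Hochster's formula and Alexander duality. No substantive difference.
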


\begin{proof}
We only sketch the proof since the result is well-known to experts. If $\Delta$ is a simplicial $(d-1)$-sphere, then $R/I_\Delta$ is a Gorenstein ring of Krull dimension $d$ and the graded canonical module of $R/I_\Delta$ coincides with $R/I_\Delta$ (see \cite[Theorem 5.6.1 and the proof of Theorem 5.6.2]{BHbook}).
Let 
\[
F_\bullet: F_{n-d}=R(-n) \to \cdots \to F_1 \to F_0 \to 0\]
be a minimal free resolution of $R/I_{\Delta}$.
Then since $R/I_\Delta$ is Cohen-Macaulay,
$G_\bullet=\mathrm{Hom}_{R}(F_\bullet,R(-n))$ is a minimal free resolution of the canonical module of $R/I_\Delta$ (see \cite[Corollary 3.3.9]{BHbook}). That the canonical module of $R/I_\Delta$ is isomorphic to $R/I_\Delta$ implies the desired symmetry.
\end{proof}

\noindent Lemma \ref{lem:symmetry} also follows from Hochster's formula and the Alexander duality.

 \section{Reconstructing stresses from higher stresses}
The goal of this section is to prove Conjecture \ref{Old Conjecture 1.2} on reconstructing the affine type of a simplicial polytope $P$ that has no large missing faces from the space of affine $i$-stresses of $P$. Since the space of affine $1$-stresses of $P$ coincides with the space of affine dependencies of vertices of $P$, to reconstruct the affine type of $P$, it suffices to reconstruct the space of affine $1$-stresses. In fact, we prove the following stronger result: we show that under certain assumptions, the space of affine $i$-stresses determines the space of affine $k$-stresses for {\em all} $1\leq k\leq i$.

\begin{theorem} \label{thm: Stress_i to Stress_j} 
		Let $(\Delta, p)$ be either the boundary complex of a simplicial $d$-polytope with its natural embedding $p$, or a simplicial $(d-1)$-sphere with a generic embedding $p$. Let $1 \leq k<i \leq \lceil \frac{d}{2}\rceil-1$. If $\Delta$ has no missing faces of dimension $\geq d-i+1$, then 
		\begin{equation} \label{eq: Stress_i to Stress_j} \Stress_k(\Delta,p)=\big\{q(\partial_{x_1},\ldots,\partial_{x_d})\cdot \lambda \ : \ \lambda\in \Stress_i(\Delta,p), \ q\in R_{i-k}\big\}.\end{equation}
		Thus, the space of affine $i$-stresses, $\Stress_i(\Delta, p)$, determines the space of affine $k$-stresses, $\Stress_k(\Delta, p)$ for all $1\leq k<i$. In particular, $\Stress_i(\Delta, p)$ determines $p$ up to an invertible affine transformation.
\end{theorem}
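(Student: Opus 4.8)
The plan is to work on the Matlis dual side. Let $A = R/\big(I_\Delta + (\Theta(p),\ell)\big)$; by the $g$-theorem (Theorem~\ref{alg-g-thm}) this is an Artinian standard graded $\R$-algebra with $\dim_\R A_k = g_k(\Delta)$ for $k \le \lceil d/2\rceil$, and by the discussion in Section~2.3 the space of affine stresses $\Stress(\Delta,p)$ is exactly $A^\vee = (I_\Delta+(\Theta,\ell))^\perp$, with $\Stress_k(\Delta,p) = (A^\vee)_{-k}$. Under this identification, the operation $q(\partial_{x_1},\dots,\partial_{x_n})\cdot(-)$ appearing on the right-hand side of \eqref{eq: Stress_i to Stress_j} is precisely the $R$-module multiplication $A^\vee_{-i} \xrightarrow{\ \cdot q\ } A^\vee_{-k}$. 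So \eqref{eq: Stress_i to Stress_j} is equivalent to the statement that $A^\vee$ is generated in degrees $\ge -i$ as far as degree $-k$ is concerned — more precisely, that $R_{i-k}\cdot A^\vee_{-i} = A^\vee_{-k}$ for all $1\le k<i$. This in turn is implied by showing $A^\vee$ has no minimal generators in degrees $> -i$, i.e.\ $\mu_{-k}(A^\vee) = 0$ for $1\le k \le i-1$, equivalently (by \eqref{2-0}) $r_k(A) = \dim_\R(\soc^R A)_k = 0$ for $1\le k\le i-1$.

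So the heart of the matter is to compute the socle of $A$ in low degrees, and this is where the hypothesis on missing faces enters. By Lemma~\ref{lem:basic-prop-of-Betti}(2), $r_k(A) = \beta^R_{n,n+k}(A)$; but this is awkward because $A$ is naturally a module over $\bar R := R/(\Theta,\ell)R$, a polynomial ring in $n-d-1$ variables. The right move is to pass through the chain of Betti-number comparisons already assembled in Section~2.4. First, since $\Theta$ is a regular sequence on $R/I_\Delta$ (Cohen--Macaulayness of the sphere), \eqref{eq:*} gives $\beta^{R/(\Theta)R}_{i,j}\big(R/(I_\Delta+(\Theta))\big) = \beta^R_{i,j}(R/I_\Delta)$. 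Next, since $\ell$ is injective on $A'_k := \big(R/(I_\Delta+(\Theta))\big)_k$ for $k \le \lceil d/2\rceil - 1$ by Theorem~\ref{alg-g-thm}, Lemma~\ref{lem:MN}(1) (in the form \eqref{2-2}) gives $\beta^{\bar R}_{i,i+k}(A) = \beta^{R/(\Theta)R}_{i,i+k}(A')$ for $k < \lceil d/2\rceil - 1$. Combining, $\beta^{\bar R}_{i,i+k}(A) = \beta^R_{i,i+k}(R/I_\Delta)$ in the relevant range of $k$. Finally, Lemma~\ref{lem:symmetry} converts a statement about the \emph{first} syzygies of $R/I_\Delta$ (which, by \eqref{2-1}, record missing faces) into a statement about the \emph{last} syzygies, i.e.\ the socle: $\beta^R_{n-d-i, n-j}(R/I_\Delta) = \beta^R_{i,j}(R/I_\Delta)$. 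Tracking indices carefully, the socle degree $k$ of $A$ (over $\bar R$, which has $n-d-1$ variables, so $r_k(A) = \beta^{\bar R}_{n-d-1, n-d-1+k}(A)$) corresponds under this symmetry to $\beta^R_{1, d-k+1}(R/I_\Delta) = m_{d-k}(\Delta)$, the number of missing $(d-k)$-faces. Thus, in the range where the comparison \eqref{2-2} is valid (roughly $k \le \lceil d/2\rceil - 2$), we get $r_k(A) = m_{d-k}(\Delta)$. Since $\Delta$ has no missing faces of dimension $\ge d-i+1$, i.e.\ $m_j(\Delta) = 0$ for $j \ge d-i+1$, we conclude $r_k(A) = 0$ for $k \le i-1$ (as $d-k \ge d-i+1$ in this range), which is exactly what we needed.

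Having established $\soc^R(A)_k = 0$ for $1\le k\le i-1$, the last piece is the elementary module-theoretic implication: if a graded $R$-module (or $\bar R$-module) $N = A^\vee$ has no minimal generator in degree $> -i$, then $N$ is generated in degrees $\le -i$, so every element of $N_{-k}$ with $k < i$ is an $R$-linear combination of images of elements of $N_{-j}$ with $j \ge i$, pushed down by forms of degree $j-k \ge i-k$; when $j > i$ one factors the degree-$(j-k)$ form and uses that $N$ is generated in degrees $\le -i$ inductively, reducing to $j = i$ and forms of degree exactly $i-k$. This gives the equality in \eqref{eq: Stress_i to Stress_j}. For the final assertion, applying \eqref{eq: Stress_i to Stress_j} with $k = 1$ recovers $\Stress_1(\Delta,p)$, the space of affine dependencies of the vertices, i.e.\ the null space of $\RR(P)$, which determines the row space of $\RR(P)$ and hence (by the Gale-diagram observation from the introduction) $P$ up to affine equivalence.

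The main obstacle is the bookkeeping in the index chase: one must correctly match the homological degree $n$ (socle over $R$) versus $n-d-1$ (socle over $\bar R$), keep straight which Betti numbers are over $R$, over $R/(\Theta)R$, and over $\bar R$, and verify that the degree ranges in Theorem~\ref{alg-g-thm}, Lemma~\ref{lem:MN}, and Lemma~\ref{lem:symmetry} all overlap on the interval $1\le k\le i-1$ under the hypothesis $i \le \lceil d/2\rceil - 1$. (The boundary case $k = \lceil d/2\rceil - 1$ is exactly where \eqref{2-3} only gives an inequality, which is why the theorem stops at $i \le \lceil d/2\rceil - 1$; for $i-1 < \lceil d/2\rceil - 1$ we are safely inside the regime where the comparisons are equalities.) Everything else is an application of the machinery set up in Section~2.
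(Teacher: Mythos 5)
Your proof is correct and follows essentially the same route as the paper: you identify $\Stress(\Delta,p)$ with the Matlis dual $(R/J)^\vee$, reduce the claim to the vanishing of $\soc^R(R/J)_k$ for $k<i$, and compute that socle via the same chain of Betti-number comparisons (regular-sequence invariance \eqref{eq:*}, the Migliore--Nagel bound \eqref{2-2}, Gorenstein symmetry from Lemma~\ref{lem:symmetry}, and the identification \eqref{2-1} of $\beta^R_{1,\bullet}$ with missing faces) — this is exactly the content of Proposition~\ref{prop:socle and missing faces}, which you have re-derived inline. The only difference is cosmetic: you spell out the factoring argument showing that ``no generators of degree $>-i$'' implies $\Stress_k = R_{i-k}\cdot\Stress_i$, a step the paper leaves implicit.
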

		
		Before we proceed, it is informative to unwrap the statement of Theorem \ref{thm: Stress_i to Stress_j}  in the case of $d\geq 5$, $k=1$, and $i=2$. Recall from Section 2.3 that $\lambda$ is an element of $\Stress_2(\Delta,p)$ if and only if $\lambda$ is of the form $\sum_{u\in [n]}\lambda_{u} x_u^2 + \sum_{uv\in\Delta}\lambda_{uv}x_ux_v$ where $(\lambda_{uv})_{uv\in\Delta}$ satisfy the balancing conditions \eqref{eq:balancing} and $\lambda_u=-\frac{1}{2}\sum_{w:uw\in\Delta}\lambda_{uw}$ for each $u\in [n]$. The partial derivative of $\lambda$ w.r.t.~$x_v$ for $v\in [n]$, is then the polynomial $\sum_{u\in[n]}\alpha_u x_u$, where 
\[
\alpha_u=\left\{\begin{array}{cl} 
\lambda_{uv} & \mbox{ if $uv$ is an edge of $\Delta$}\\
-\sum_{w: uw\in\Delta} \lambda_{uw}  & \mbox{ if $u=v$}\\
0 & \mbox{ otherwise.} \end{array} \right.
\]
Equation \eqref{eq: Stress_i to Stress_j} asserts that $(\alpha_u)_{u\in [n]}$ is an affine dependence of the point configuration $(p(u))_{u\in [n]}$, and moreover, that if $\Delta$ 
is not a simplex and has no missing facets,
then {\em every} affine dependence of $(p(u))_{u\in [n]}$ is a linear combination of dependencies of the above form, possibly coming from different $2$-stresses and from partial derivatives w.r.t~different variables.\footnote{In fact, it follows from \cite[Theorem 1.14]{GHT} along with \cite[Theorem 1.2]{CJT} that if $p$ is a generic embedding, then there is a {\em single} $2$-stress $\lambda$ whose partial derivatives span $\Stress_1(\Delta,p)$.}
		
	That the right-hand side of eq.~\eqref{eq: Stress_i to Stress_j} is contained in the left-hand side is immediate from the definition of stress spaces, and so we only need to prove the reverse inclusion.
We first prove the following statement.

	\begin{proposition} \label{prop:socle and missing faces}
	Let $(\Delta, p)$ be either the boundary complex of a simplicial $d$-polytope with its natural embedding $p$, or a simplicial $(d-1)$-sphere with a generic embedding $p$, and let $\Theta$ be the l.s.o.p.~of $R/I_\Delta$ determined by $p$. Then 
	\begin{eqnarray*}
	\dim_\R \soc^R\big(R/\big(I_\Delta+(\Theta,\ell)\big)\big)_k &=& m_{d-k}(\Delta) \quad \mbox{ if } \textstyle k<\lceil \frac{d}{2}\rceil -1.\\
	\dim_\R \soc^R\big(R/\big(I_\Delta+(\Theta,\ell)\big)\big)_k &\geq& m_{d-k}(\Delta) \quad  \mbox{ if } \textstyle k=\lceil \frac{d}{2}\rceil -1.
	\end{eqnarray*}
	\end{proposition}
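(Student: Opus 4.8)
The plan is to relate the graded component $\soc^R\big(R/(I_\Delta+(\Theta,\ell))\big)_k$ to a graded Betti number via Lemma \ref{lem:basic-prop-of-Betti}(2), and then to track this Betti number back through the quotient maps using the tools assembled in Section 2.4. Writing $A=R/(I_\Delta+(\Theta,\ell))$, since $A$ is a quotient of $R$ by an ideal containing $\Theta$ and $\ell$, it is naturally a module over $S=R/(\Theta,\ell)R$, a polynomial ring in $n-d-1$ variables. The socle of $A$ as an $R$-module coincides with its socle as an $S$-module, so by Lemma \ref{lem:basic-prop-of-Betti}(2) applied over $S$ we get $\dim_\R \soc^R(A)_k = r_k(A) = \beta_{n-d-1,\,n-d-1+k}^{S}(A)$.

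Next I would push this Betti number down through the two linear forms. First, $\ell$ is a non-zero-divisor on $R/(I_\Delta+(\Theta))$ in degrees $\leq \lceil d/2\rceil -1$ by Theorem \ref{alg-g-thm}, so Lemma \ref{lem:MN}(1) (in the form \eqref{2-2}) gives, for $k<\lceil d/2\rceil-1$,
\[
\beta_{i,i+k}^{S}\big(A\big)=\beta_{i,i+k}^{R/(\Theta)R}\big(R/(I_\Delta+(\Theta))\big)\quad\text{for all }i.
\]
Second, $\Theta$ is a regular sequence on $R/I_\Delta$ (Cohen--Macaulayness of the sphere), so by Lemma \ref{lem:nonzerodiv} (in the form \eqref{eq:*}) the right-hand side equals $\beta_{i,i+k}^R(R/I_\Delta)$. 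Combining, $\dim_\R\soc^R(A)_k=\beta_{n-d-1,\,n-d-1+k}^{R}(R/I_\Delta)$ for $k<\lceil d/2\rceil -1$, and the inequality version for $k=\lceil d/2\rceil-1$ follows identically using Lemma \ref{lem:MN}(2) / \eqref{2-3} in place of \eqref{2-2}.

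Finally I would identify $\beta_{n-d-1,\,n-d-1+k}^{R}(R/I_\Delta)$ with $m_{d-k}(\Delta)$ using the duality of Lemma \ref{lem:symmetry}: with $(i,j)=(n-d-1,\,n-d-1+k)$ we have $\beta_{i,j}^R(R/I_\Delta)=\beta_{n-d-i,\,n-j}^R(R/I_\Delta)=\beta_{1,\,d+1-k}^R(R/I_\Delta)$, which by \eqref{2-1} equals $m_{d-k}(\Delta)$. This completes the proof. The one point that requires care — and is the main obstacle — is justifying that the hypothesis ``$\Delta$ has no missing faces of dimension $\geq d-i+1$'' is \emph{not} actually needed for Proposition \ref{prop:socle and missing faces} as stated (the chain above uses only the $g$-theorem and Cohen--Macaulayness); the no-large-missing-faces hypothesis will instead enter later, in deducing Theorem \ref{thm: Stress_i to Stress_j} from the proposition, where one needs that the \emph{only} socle degrees are $\leq -i$, i.e.\ that $m_{d-k}(\Delta)=0$ for $k<i$. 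I would also double-check the edge cases where $n-d-1=0$ (i.e.\ $\Delta$ a simplex boundary) so that the Betti number indexing is consistent, and verify that $S$ has exactly $n-d-1$ variables because $\Theta,\ell$ are linearly independent linear forms — which holds since $\Theta$ is an l.s.o.p.\ and $\ell$ is a Lefschetz element, hence not in the span of $\Theta$ (this uses $g_1(\Delta)\geq 1$, i.e.\ $n>d+1$, or is vacuous otherwise).
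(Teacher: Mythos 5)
Your proposal is correct and follows the same route as the paper: reduce the socle dimension to $\beta_{n-d-1,n-d-1+k}^{R/(\Theta,\ell)R}$ via Lemma~\ref{lem:basic-prop-of-Betti}(2), pull this back through $\ell$ via Lemma~\ref{lem:MN} (eqs.~\eqref{2-2}, \eqref{2-3}) and through $\Theta$ via Lemma~\ref{lem:nonzerodiv} (eq.~\eqref{eq:*}), then apply the Gorenstein symmetry of Lemma~\ref{lem:symmetry} and eq.~\eqref{2-1}. Your side remarks (that the no-large-missing-faces hypothesis enters only in Theorem~\ref{thm: Stress_i to Stress_j}, not here, and that $\Theta,\ell$ are linearly independent) are both accurate.
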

\begin{proof}
Observe that $R/(\Theta)R$ and $R/(\Theta,\ell)R$ are isomorphic to polynomial rings with $n-d$ and $n-d-1$ variables, respectively.
Since $\soc^R\big(R/\big(I_\Delta+(\Theta,\ell)\big)\big)=\soc^{R/(\Theta,\ell)R}\big(R/\big(I_\Delta+(\Theta,\ell)\big)\big)$, we conclude from
Lemma \ref{lem:basic-prop-of-Betti}(2) that for all $k$,
\begin{align}
\label{3-0}
\dim_\R \soc^R\big(R/\big(I_\Delta+(\Theta,\ell)\big)\big)_k
=\beta_{n-d-1,n-d-1+k}^{R/(\Theta,\ell)R}\big(R/\big(I_\Delta+(\Theta, \ell)\big)\big).\end{align}
Now, by equations \eqref{2-2} and \eqref{2-3},
\begin{align}
\label{3-1}
\textstyle
\beta_{n-d-1,n-d-1+k}^{R/(\Theta,\ell)R}\big(R/\big(I_\Delta+(\Theta,\ell)\big)\big)=
\beta_{n-d-1,n-d-1+k}^{R/(\Theta)R}\big(R/\big(I_\Delta+(\Theta)\big)\big) \ \ \mbox{ for }k <\big\lceil \frac{d}{2}\big\rceil-1
\end{align}
and
\begin{align}
\label{3-2}
\textstyle
\beta_{n-d-1,n-d-1+k}^{R/(\Theta,\ell)R}\big(R/\big(I_\Delta+(\Theta,\ell)\big)\big)\geq
\beta_{n-d-1,n-d-1+k}^{R/(\Theta)R}\big(R/\big(I_\Delta+(\Theta)\big)\big) \ \ \mbox{for }k = \big\lceil \frac{d}{2}\rceil-1.
\end{align}
But
\begin{align*}
\beta_{n-d-1,n-d-1+k}^{R/(\Theta)R}\big(R/\big(I_\Delta+(\Theta)\big)\big)
&=\beta_{n-d-1,n-d-1+k}^{R}(R/I_\Delta) & \mbox{ (by eq.~\eqref{eq:*})}\\
&=\beta_{1,d+1-k}^R(R/I_\Delta) & \mbox{ (by Lemma \ref{lem:symmetry})}\\
&=m_{d-k}(\Delta) & \mbox{ (by eq.~\eqref{2-1})}.
\end{align*}
The last equation together with equations  \eqref{3-0}, \eqref{3-1} and \eqref{3-2} completes the proof.
\end{proof}

We are ready to prove the main result of this section.

\smallskip\noindent {\it Proof of Theorem \ref{thm: Stress_i to Stress_j}: \ }
Let $J=I_\Delta+(\Theta(p),\ell)$, where $\Theta(p)$ is the l.s.o.p.~determined by $p$.
Consider the natural isomorphism $\Stress(\Delta,p) \cong J^\perp$ explained in Section 2.3.
Keeping the $R$-module structure of $J^\perp$ in mind, equation \eqref{eq: Stress_i to Stress_j} is equivalent to the claim that as an $R$-module, $J^\perp$ has no generators of degree $>-i$. Thus, to prove the theorem, it suffices to show that for $k<i$, $\mu_{-k}(J^\perp)=0$.

Since $J^\perp$ is isomorphic to $(R/J)^\vee$, it follows from \eqref{2-0} that for all $k$, $\mu_{-k}(J^\perp)=r_{k}(R/J)$. Now, by Proposition \ref{prop:socle and missing faces}, if $k< \lceil \frac{d}{2}\rceil -1$ then $r_k(R/J) = m_{d-k}(\Delta)$. Hence for $k <i$, 
\[
\mu_{-k}(J^\perp) = r_k(R/J)=m_{d-k}(\Delta)=0,
\]
where the last step follows from the assumption that $\Delta$ has no missing faces of dimension $\geq d-i+1$. This completes the proof.
\endproof

Proposition \ref{prop:socle and missing faces} played a crucial role in this section and will be used again in the next section. Thus, it is helpful to illustrate this result on several examples. Let $\{e_1, \ldots, e_d\}$ be the standard basis of $\R^d$.

\begin{example}
	Let $\Sigma$ be the boundary complex of the $d$-simplex with $p(i)=e_i$ for $1\leq i\leq d$, and $p(d+1)=-\sum_{i=1}^d e_i$.  Then $m_d(\Sigma)=1$, $m_j(\Sigma)=0$ for $j<d$, and  $$R/(I_\Sigma+(\Theta,\ell))=\R[x_1,\dots x_{d+1}]/(x_1x_2\dots x_{d+1}, x_1-x_{d+1}, x_2-x_{d+1}, \dots, x_d-x_{d+1}, x_1+\dots+x_{d+1})=\R.$$ Hence, the socle is concentrated in degree zero and has dimension one.  This agrees with Proposition~\ref{prop:socle and missing faces} asserting that 
	$$\dim_\R \soc^R (R/(I_\Sigma+(\Theta,\ell)))_k =\begin{cases}
		1=m_{d-0}(\Sigma) & \mbox{ if } k=0 \\
		0=m_{d-k}(\Sigma) & \mbox{ if } 1\leq k< \lceil \frac{d}{2}\rceil -1\\
		0 \geq m_{d-k}(\Sigma) & \mbox{ if } k= \lceil \frac{d}{2}\rceil -1
	\end{cases}.$$ In particular, the inequality in the case of $k=\lceil\frac{d}{2}\rceil-1$ holds as equality. 
\end{example}

\begin{example}
To understand Proposition \ref{prop:socle and missing faces}, it is useful to consider {\em Betti tables}. The $(j,i)$-th entry of such a table is $\beta_{i,i+j}$ (i.e., $i$ indicates the column number and $j$ indicates the row number). 

Let $P\subset \R^5$ be the convex hull of $\pm e_1,\dots,\pm e_5,\frac 1 4 \sum_{k=1}^5 e_i$, and let $\Gamma=\partial P$ be the boundary complex of $P$ with its natural embedding $p$.
Note that $P$ is obtained from the $5$-dimensional cross-polytope by a stellar subdivision at a facet.
The left table below is the Betti table of $A_\Gamma=R/I_\Gamma$ over $R$ 
and the right table is that of $\overline A_\Gamma=R/(I_\Gamma+(\Theta,\ell))$ over $\overline R=R/(\Theta,\ell)R$, where $\Theta=\Theta(p)$. Both tables were computed using the computer algebra system Macaulay2 \cite{M2}.
\begin{center}
\small
\begin{tabular}{c|ccccccc}
& 0 & 1 & 2 & 3 & 4 & 5 & 6\\
\hline
0 & 1 & \\
1 &  & 10 & 15 & 10 & 5 & \bf {\color{red}1} & \\
2 &  & 0 & 10 & 10 & 0 & \bf {\color{blue} 0} & \\
3 &  & \bf {\color{blue}0} & 0 & 10 &10 & 0 &\\
4 &  & \bf {\color{red}1} & 5 & 10 & 15 & 10 & \\
5 & & & && & & 1
\end{tabular}
\hspace{20pt}
\begin{tabular}{c|ccccccc}
& 0 & 1 & 2 & 3 & 4 & 5\\
\hline
0 & 1 & &  & & &  \\
1 &  & 10 & 15 & 10 & 5 & \bf \color{red}1 \\
2 &   & 0 & 15 &31 & 21 & \bf \color{blue} 5
\end{tabular}
\end{center}

For instance, the left table says that $\beta_{0,0}^R(A_\Gamma)=1$,
$\beta_{1,2}^R(A_\Gamma)=10$,
$\beta_{1,3}^R(A_\Gamma)=0$,
$\beta_{1,4}^R(A_\Gamma)=0$,
$\beta_{1,5}^R(A_\Gamma)=1$, and so on.
Note that the left table is symmetric as it should be by Lemma \ref{lem:symmetry}.
The first column of the left table consists of numbers $m_k(\Gamma)=\beta_{1,k+1}^R(A_\Gamma)$ (see \eqref{2-1}), and so
\[m_1(\Gamma)=10,\ m_2(\Gamma)=0,\ m_3(\Gamma)=0, \mbox{ and }m_4(\Gamma)=1.\]
The 5th column of the right table consists of numbers
$\dim_\R \soc^R(\overline A_\Gamma)_k
=\beta^{\overline R}_{5,5+k}(\overline A_\Gamma)$
(see Lemma \ref{lem:basic-prop-of-Betti}(2)). Thus, we conclude that
\[
\dim_\R \soc^R (\overline A_\Gamma)_1=1
\mbox{ and }
\dim_\R \soc^R (\overline A_\Gamma)_2=5.
\]

Lemmas \ref{lem:nonzerodiv} and \ref{lem:MN} imply that the first row of the left table coincides with the first row of the right table, while each entry of the second row of the right table is at least as large as the corresponding entry of the left table. In particular, keeping in mind the symmetry of the left table, we obtain that
\begin{align}
\label{exeq1}
1=
\dim_\R \soc^R (R/(I_\Gamma+(\Theta,\ell)))_1
=\beta_{5,6}^{\overline R} (\overline A_\Gamma)=\beta_{5,6}^R(A_\Gamma)=\beta_{1,5}^R(A_\Gamma)=\mu_4(\Gamma)=1
\end{align}
(see the entries marked in red)
and
\begin{align}
\label{exeq2}
5=
\dim_\R \soc^R (R/(I_\Gamma+(\Theta,\ell)))_2
=\beta_{5,7}^{\overline R}(\overline A_\Gamma)
\geq
\beta_{5,7}^R(A_\Gamma)=\beta_{1,4}^R(A_\Gamma)=\mu_3(\Gamma)=0
\end{align}
(see the entries marked in blue).
Equations \eqref{exeq1} and \eqref{exeq2} illustrate the proof and the statement of Proposition \ref{prop:socle and missing faces}.
Equation \eqref{exeq2} also shows that the inequity in the proposition could be strict.
\end{example}

\begin{example}
Let $\Omega$ be the boundary complex of the $4$-dimensional cross-polytope with vertices $\pm e_1,\dots,\pm e_4$ in $\R^4$.
The Betti tables of $R/I_\Omega$ and $R/(I_{\Omega}+(\Theta,\ell))$ (over $R$ and $\overline{R}=R/(\Theta,\ell)R$, respectively)
are as follows.
\begin{center}
\small
\begin{tabular}{c|ccccccc}
& 0 & 1 & 2 & 3 & 4 \\
\hline
0 & 1 & \\
1 &  & 4 & 0 & \bf \color{red} 0 &  \\
2 &  & 0 & 6 & 0 & \\
3 &  & \bf \color{red}0 & 0 &4  &\\
4 & & & && 1
\end{tabular}
\hspace{20pt}
\begin{tabular}{c|ccccccc}
& 0 & 1 & 2 & 3 \\
\hline
0 & 1 & &  & & &  \\
1 &  &4 & 2 & \bf \color{red} 0  \\
2 &   & 0 & 3 &2 
\end{tabular}
\end{center}

Since $d=4$,
Lemmas \ref{lem:nonzerodiv} and \ref{lem:MN} only imply that the entries of the first row of the right table are at least as large as the corresponding entries of the left table. Consequently,
\[
\dim_\R \soc^R(R/(I_\Omega+(\Theta,\ell)))_1
=
\beta_{3,4}^{\overline{R}}(R/(I_\Omega+(\Theta,\ell)))
\geq 
\beta_{3,4}^R(R/I_\Omega)=
\beta_{1,4}^R(R/I_\Omega)=m_3(\Omega),
\]
which is the inequality guaranteed by Proposition \ref{prop:socle and missing faces}.
In this example, the values of the left- and right-hand sides are both zeros (see the entries marked in red); hence they are equal.
\end{example}

\begin{remark}
	When $d=2k+1$ is an odd number and $\Delta$ is a simplicial $2k$-sphere with a generic embedding, $\dim_\R \soc^R (R/(I_\Gamma+(\Theta,\ell)))_k=\dim_\R R/(I_\Gamma+(\Theta,\ell))_k =g_k(\Delta)$. This number is often strictly larger than $m_{k+1}$ (see \eqref{exeq2}).  Corollary \ref{cor:k-stacked}, which we prove in the following section, discusses certain sufficient conditions for the equality to hold.
	
	When $\Delta$ is a simplicial $3$-sphere with $m_3(\Delta)=m_4(\Delta)=0$ and $p$ is generic, the results of \cite{CJT} imply that \eqref{eq: Stress_i to Stress_j} continues to hold for $i=\frac{4}{2}=2$ and $k=i-1=1$. Consequently, in this case,  $\dim_\R \soc^R (R/(I_\Delta+(\Theta,\ell)))_1=0=m_3(\Delta)$, and so the inequality of Proposition \ref{prop:socle and missing faces} holds as equality. For $d=2i\geq 6$ and $k=i-1$, there do exist examples where the inequality of Proposition \ref{prop:socle and missing faces} is strict even if one assumes that $m_{i+1}=\cdots=m_d=0$. Such examples will be discussed in a future paper.  As a consequence,  the statement of  Theorem \ref{thm: Stress_i to Stress_j} does not extend to the case of $d=2i\geq 6$, and hence at present the case $d=2i$ of Conjecture \ref{Old Conjecture 1.2} remains open.
\end{remark}

\section{$k$-stacked polytopes and a strengthening of the $g$-theorem}
The goal of this short section is to use results of Section 3, most notably Proposition \ref{prop:socle and missing faces}, to prove Corollaries \ref{cor:inequalities on g_k} and \ref{cor:k-stacked}. Throughout this section, we assume that $\Delta$ is a simplicial $(d-1)$-sphere with a generic $d$-embedding $p$ and we let $\Theta$ be the l.s.o.p.~of $R/I_\Delta$ determined by $p$. 

\smallskip\noindent {\it Proof of Corollary \ref{cor:inequalities on g_k}: \ }
Let $M=R/(I_\Delta+(\Theta,\ell))$ and let $k\leq\lceil \frac{d}{2}\rceil-1$. Then $M_k \supseteq \soc^R(M)_k$, and by the $g$-theorem, $\dim_\R M_k=g_k(\Delta)$. The inequality $g_k(\Delta)\geq m_{d-k}(\Delta)$ then follows from Proposition \ref{prop:socle and missing faces}.
Now, by definition of socles, $\soc^R(M)_k$ is an ideal of $M$. Furthermore, by Proposition \ref{prop:socle and missing faces}, the Hilbert function of the quotient algebra $A:=M/\big(\soc^R(M)_k)$ satisfies the following:
\begin{eqnarray*}
&&\dim_\R A_i = g_i(\Delta)  \qquad \qquad \qquad \mbox{ if $0\leq i \leq \lceil \frac d 2 \rceil, \ i\neq k$;}\\ 
&&\dim_\R A_i \leq g_k(\Delta)-m_{d-k}(\Delta) \;\;\, \mbox{ if $i=k$.}
\end{eqnarray*}
The inequality $0\leq g_{k+1}(\Delta)\leq \big(g_k(\Delta)-m_{d-k}(\Delta)\big)^{\langle k \rangle}$ then follows from Macaulay's theorem.
\endproof

\begin{remark} The join of two copies of the boundary complex of a $k$-simplex is a simplicial $(2k-1)$-sphere with exactly two missing $k$-faces and $g_k=1$. Hence the inequality $g_k\geq m_{d-k}$ of Corollary \ref{cor:inequalities on g_k} does not hold in the case of $k=\frac d 2$.
\end{remark}

\smallskip\noindent {\it Proof of Corollary \ref{cor:k-stacked}: \ }
Let $1\leq k \leq \lfloor \frac{d}{2}\rfloor-1$ and assume that $g_k(\Delta)=m_{d-k}(\Delta)$. Then by Corollary \ref{cor:inequalities on g_k}, $0\leq g_{k+1}\leq 0^{\langle k\rangle}$. Thus, $g_{k+1}(\Delta)=0$, and so by a result of Murai and Nevo \cite{MuraiNevo2013}, $\Delta$ is $k$-stacked. 

Conversely, assume that $1\leq k\leq \lceil \frac{d}{2}\rceil-1$ and that $\Delta$ is $k$-stacked. Then there exists a homology $d$-ball $\Gamma$ such that $\partial\Gamma=\Delta$ and  $f_{i-1}(\Gamma,\partial\Gamma)=0$ for all $i\leq d-k$. In fact, such $\Gamma$ is unique: the faces of 
$(\Gamma,\partial\Gamma)$ are the sets $F\subseteq V(\Delta)$ having the property that $F$ is not a face of $\Delta$ but all $(d-k)$-subsets of $F$ are faces of $\Delta$
(see \cite[Theorem 2.3]{MuraiNevo2013} and references provided there). Therefore, the $(d-k)$-faces of $(\Gamma,\partial \Gamma)$ are precisely the missing $(d-k)$-faces of $\Delta$, that is, $f_{d-k}(\Gamma,\partial\Gamma)=m_{d-k}(\Delta)$. We then have
\[g_k(\Delta)=h_k(\Gamma)=h_{d+1-k}(\Gamma,\partial\Gamma)=f_{d-k}(\Gamma,\partial\Gamma)+\sum_{i=1}^{d-k}(-1)^{j-i}\binom{d+1-i}{k}f_{i-1}(\Gamma, \partial \Gamma)=m_{d-k}(\Delta).\]
Here, the first and the third steps follow from the definitions of the $g$- and $h$-numbers; for the second step, see \cite[II Section 7]{Stanley96}. The result follows.
\endproof
	
\section{Every edge participates in a stress}
A simplicial polytope $P$ is called {\em prime} if $P$ has no missing facets.
The goal of this section is to prove Theorem \ref{thm:edges in stresses} asserting that if $d\geq 4$ and $P$ is a prime simplicial $d$-polytope that is not a simplex, then every edge of $P$ participates in an affine $2$-stress. In other words, if $G(P)=\skel_1(\partial P)$ is the graph of $P$ with its natural embedding $p$ and $e$ is an edge of $G(P)$, then there exists $\lambda\in\Stress_2(G(P), p)$ such that $\lambda_e\neq 0$. Our proof relies on several results from the classical rigidity theory of frameworks, and we refer the reader to \cite{AsimowRothI,AsimowRothII} (see also \cite[Chapter 6]{Lee-notes}) for background and all undefined terminology.

Let $G=(V,E)$ be a finite graph with an embedding $p:V\to \R^d$ and assume that the affine span of $p(V)$ is the entire $\R^d$. The pair $(G,p)$ is called a \emph{$d$-framework}. It is known and not hard to prove that for every $d$-framework $(G,p)$, $\dim \Stress_2(G,p)\geq f_1(G)-df_0(G)+\binom{d+1}{2}$. A $d$-framework $(G,p)$ is called {\em infinitesimally rigid} if $\dim \Stress_2(G,p)= f_1(G)-df_0(G)+\binom{d+1}{2}$. It is an easy consequence of this discussion that if $(G,p)$ is infinitesimally rigid and $e$ is a missing edge of $G(P)$, then adding $e$ to $G$ preserves infinitesimal rigidity and increases the dimension of the space of affine $2$-stresses by exactly one. The following result asserts that the graph of any simplicial polytope is infinitesimally rigid.

\begin{theorem} \label{thm:rigidity of polytopes}
    	Let $d\geq 3$ and let $P\subset \R^d$ be a simplicial $d$-polytope. Then the graph of $P$ with its natural embedding $p$ is infinitesimally rigid. 
 \end{theorem}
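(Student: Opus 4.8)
The statement to prove is Theorem~\ref{thm:rigidity of polytopes}: the graph of a simplicial $d$-polytope $P\subset\R^d$ with its natural embedding is infinitesimally rigid for $d\ge 3$. This is a classical theorem of Whiteley (building on Kalai's use of it in the proof of the Lower Bound Theorem), and the standard proof is by induction on $d$ together with a cone/gluing argument.

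The plan is to argue by induction on $d$. The base case $d=3$ is Cauchy's rigidity theorem (equivalently Dehn's infinitesimal version): every convex simplicial $3$-polytope is infinitesimally rigid; I would simply cite this. For the inductive step, the key tool is the \emph{vertex splitting / cone lemma}: if $(G,p)$ is an infinitesimally rigid $d$-framework and $v$ is a new vertex placed in general position, then the cone over $(G,p)$ from $v$ (adding edges from $v$ to every vertex of $G$) is an infinitesimally rigid $(d+1)$-framework, and conversely a $(d+1)$-framework is infinitesimally rigid iff its cone is. Combined with this, one uses the fact that for a simplicial $d$-polytope $P$ and a vertex $u$, the vertex figure $P/u$ is a simplicial $(d-1)$-polytope whose graph is (the link of $u$, i.e.) the neighborhood structure of $u$ in $G(P)$; by induction $G(P/u)$ is infinitesimally rigid in $\R^{d-1}$, hence the cone $\st(u)$ — the subframework on $u$ and its neighbors, with all edges from $u$ — is infinitesimally rigid in $\R^d$. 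The remaining step is a \emph{gluing} argument: $\partial P$ is covered by the stars $\st(u)$, any two of which, when they share a face, share at least a $(d-1)$-simplex (a facet contains $d$ vertices, giving $\binom{d}{2}\ge d$ shared edges spanning $\R^d$), and the union of two infinitesimally rigid frameworks sharing $d$ affinely spanning vertices is infinitesimally rigid. Since $G(P)$ is connected and covered by these pieces, iterating the gluing lemma shows $G(P)$ is infinitesimally rigid.

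More concretely, the steps in order are: (1) record the base case $d=3$ (Cauchy/Dehn); (2) state and invoke the coning lemma for infinitesimal rigidity; (3) observe that for each vertex $u$ of $P$, the link $\lk_{\partial P}(u)$ with the induced embedding is (affinely equivalent to) the boundary complex of the vertex figure $P/u$, a simplicial $(d-1)$-polytope, so by the inductive hypothesis its graph is infinitesimally rigid in $\R^{d-1}$, and hence by (2) the closed star framework on $\st(u)$ is infinitesimally rigid in $\R^d$; (4) invoke the gluing lemma: two infinitesimally rigid $d$-frameworks whose vertex sets intersect in at least $d$ points with affine span $\R^d$ have an infinitesimally rigid union; (5) since the facets of $P$ are $(d-1)$-simplices, consecutive stars along a path in the (connected) dual graph overlap in a full facet, hence in $d$ affinely independent vertices, so by iterating (4) over all vertices $u$ we conclude that $G(P)=\bigcup_u \st(u)^{(1)}$ is infinitesimally rigid.

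The main obstacle — or rather the step requiring the most care — is the coning lemma (step 2), i.e.\ the precise relation between the rigidity matrix of a framework in $\R^{d-1}$ and that of its cone in $\R^d$, and making sure the genericity of the apex is harmless because we may choose coordinates so that the apex is a coordinate point ``at infinity'' direction; this is the content of Whiteley's coning theorem and is where one must be careful that the dimension counts $f_1-df_0+\binom{d+1}{2}$ match up exactly under coning. A secondary point to check is that the induced embedding of $\lk_{\partial P}(u)$ really is infinitesimally rigid as a framework — this is essentially the statement that the vertex figure $P/u$, obtained by intersecting $P$ with a hyperplane separating $u$ from the other vertices, is a genuine simplicial $(d-1)$-polytope, which is standard polytope theory. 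Everything else is bookkeeping with the gluing lemma and the observation that facets of a simplicial polytope are simplices, which guarantees the required overlap.
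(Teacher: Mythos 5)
Your proof is correct, but it takes a genuinely different route from the paper's. The paper dispatches Theorem~\ref{thm:rigidity of polytopes} in one line: it observes $g_2(P)=f_1(P)-df_0(P)+\binom{d+1}{2}$ and appeals to Theorem~\ref{g-thm-stresses} (the $g$-theorem in stress language), which gives $\dim_\R \Stress_2(G(P),p)=g_2(P)$ — exactly the definition of infinitesimal rigidity. It also remarks that the result is a special case of Whiteley's theorem. You, by contrast, reconstruct Whiteley's elementary inductive proof: Cauchy/Dehn as the $d=3$ base, the Cone Lemma to transfer rigidity of vertex-figure graphs up one dimension to vertex stars, and the Gluing Lemma to assemble the stars into the full graph. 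The tradeoff is clear: the paper's route is a one-line observation that is essentially free once one has the $g$-theorem (which the paper needs anyway for its main results), but the $g$-theorem is very deep; your route is self-contained and uses only the rigidity-theoretic toolkit (Cone and Gluing Lemmas) that the paper itself deploys later in Section~5, at the cost of a longer argument. One small clean-up: your gluing step (5) should glue stars of \emph{adjacent vertices} $u,v$ along a walk in the (connected) graph $G(P)$, noting that $\st(u)\cap\st(v)$ contains $\st(uv)$ and hence a full facet ($d$ affinely independent points); your phrasing in terms of ``the dual graph'' conflates facet adjacency with vertex adjacency, though the intended argument is sound. Likewise, the worry about ``genericity of the apex'' in the Cone Lemma is a red herring here: the vertex figure $P/u$ is obtained by projecting from $p(u)$ onto a hyperplane strictly separating $u$ from the other vertices, and the paper's form of the Cone Lemma applies directly under that hypothesis with no genericity needed.
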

	
	Since $g_2(P)=f_1(P)-df_0(P)+\binom{d+1}{2}$, Theorem \ref{thm:rigidity of polytopes} is a special case of Theorem \ref{g-thm-stresses} for polytopes. It is also a special case of a result of Whiteley \cite{Whiteley-84}. In the case of $d=3$, it was proved by Dehn, and is often referred to as Dehn's lemma. Since any simplicial $3$-polytope has $g_2=0$, Dehn's lemma asserts that the graph of a simplicial $3$-polytope $P$ supports no non-trivial affine $2$-stresses.  
	Therefore, if $e$ is a missing edge of $G(P)$, then there is a unique $\lambda\in\Stress_2(G(P)\cup e, p)$ such that $\lambda_e=1$. 
	
	A strengthening of Dehn's lemma was proved in \cite[Lemma 5.1]{NZ-reconstruction}. Lemma \ref{stress on 3-polytope union an edge} provides a further strengthening. A big portion of our proof is based on \cite[Theorem 6.17]{Lee-notes}. 
	 To state the result, we introduce some notation.
	Let $P$ be a simplicial $d$-polytope. If $\tau$ is a missing facet of $\partial P$, then cutting $P$ along the hyperplane determined by the vertices of $\tau$ decomposes $P$ into two polytopes $P_1$ and $P_2$; we write $P=P_1\#P_2$.  Induction on the number of missing facets then implies that $P$ has a unique decomposition $P_1\#P_2\#\cdots\#P_m$, where each $P_i$ is a prime $d$-polytope. This decomposition is called the {\em prime decomposition} of $P$, and each $P_i$ is called a {\em prime component} of~$P$.

\begin{lemma}\label{stress on 3-polytope union an edge}
		Let $\partial P$ be the boundary complex of a simplicial $3$-polytope $P$ with its natural embedding $p$ and let $e=ab$ be a missing edge of $\partial P$. Let $P_1, \dots, P_k$ be the prime components of $P$ that have non-empty intersections with the interior of the line segment $[p(a), p(b)]$. Let $Q=P_1\#P_2\#\cdots \#P_k\subset P$ and let $\lambda$ be the unique affine $2$-stress on $(G(Q)\cup e, p)$ such that $\lambda_e=1$. Then $\lambda_{e'}< 0$ for every edge $e'$ of $G(Q)$ incident with either $a$ or $b$.
	\end{lemma}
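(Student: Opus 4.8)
The plan is to reduce to the case where $Q$ itself is prime, and then to extract the sign of the weights on the edges at $a$ and $b$ from the geometry of the single $2$-stress living on $G(Q)\cup e$. First I would observe that since $\lambda_e = 1 \neq 0$, the stress $\lambda$ cannot vanish identically on the prime component containing $a$ nor on the one containing $b$; iterating the additivity of $2$-stresses across the shared missing facets (each missing facet of $Q$ is a triangle $\tau$, and a $2$-stress is determined on each side of $\tau$ by its values on the edges of $\tau$ together with the balancing conditions), I would want to reduce the statement to the case $k=1$, i.e.\ to the situation where $Q=P_1$ is prime and both $a$ and $b$ are vertices of $Q$ with $e=ab$ crossing the interior of $Q$. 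The point is that if $e'$ is an edge at $a$ lying in a prime component $P_j$ not at the "$a$-end," one has to track how the stress propagates through the triangles; I would phrase this so that the relevant sign statement for $P_j$ follows from the $k=1$ case applied with $e$ replaced by an appropriate missing edge of $P_j$ with one endpoint $a$ (or $b$).

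For the prime case, the key tool is the cited \cite[Theorem 6.17]{Lee-notes}, together with Dehn's lemma and the balancing condition \eqref{eq:balancing}. The strategy is to realize $\lambda$ as an (affine) $2$-stress on the graph $G(Q)\cup e$ and to use the fact that, because $Q$ is a prime $3$-polytope, the only $2$-stresses on $G(Q)$ alone are trivial; hence $\lambda$ is, up to scaling, the unique stress detecting the missing edge $e$. Geometrically, adding the diagonal $e=ab$ of a polytope and forming the unique self-stress that is $+1$ on $e$ corresponds, via Maxwell--Cremona type reciprocity in dimension $2$ (applied on the vertex figures of $a$ and of $b$, which are simplicial $2$-polytopes, i.e.\ convex polygons), to a reciprocal/lifting whose sign pattern is forced: an edge whose removal one is "bracing" against gets the opposite sign from the chord. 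Concretely, I would look at $\lk_{\partial Q}(a)$, a convex polygon with vertices the neighbors of $a$; the balancing condition \eqref{eq:balancing} at $a$ reads $\sum_{v:\,av\in G(Q)\cup e}\lambda_{av}\,(p(a)-p(v))=\mathbf 0$. Since $e=ab$ points into the interior of $Q$, the vector $p(a)-p(b)$ with coefficient $\lambda_{ab}=1$ must be cancelled by a positive combination of the vectors $p(a)-p(v)$ for $v$ a neighbor of $a$ \emph{on the boundary}; because all these boundary vectors point from $p(a)$ into (a neighborhood of) $Q$ and lie on one side determined by the supporting structure at $a$, the coefficients $\lambda_{av}$ are forced to be negative. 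Making "all these vectors lie in an open half-space, and their positive hull is pointed" precise — i.e.\ that a balancing relation among $p(a)-p(b)$ and the $p(a)-p(v)$ with the coefficient on the chord being $+1$ forces all the others to be strictly negative — is exactly where the convexity of $Q$ (primeness guaranteeing $b$ is not itself a neighbor of $a$ "around the outside," so that $p(a)-p(b)$ genuinely points inward) and Lee's Theorem~6.17 do the work. The same argument at $b$ handles the edges incident with $b$.

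The main obstacle I anticipate is the propagation/reduction step across the prime components: one must be careful that when $e$ crosses several prime pieces $P_1,\dots,P_k$, the restriction of $\lambda$ to each intermediate piece is again a nontrivial stress "detecting a chord," so that the $k=1$ argument applies, and that the signs are consistent when passing from one triangle $\tau=P_i\cap P_{i+1}$ to the next. Handling this cleanly will probably require an induction on $k$ in which one peels off $P_1$ (the component containing $a$), replaces $e=ab$ by the edge $e_1$ from $a$ to the third vertex configuration of the triangle $\tau_1=\partial P_1\cap\partial P_2$ along $[p(a),p(b)]$ — more precisely, one uses that on $P_1$ the stress $\lambda$ restricts to the unique $+c$-multiple (for some $c>0$, by the induction step and the geometry) of the stress bracing $a$ against $\tau_1$ inside $P_1$ — and invokes the prime case there; the strict negativity of the weights at $a$ on edges of $P_1$ is immediate, and for edges at $a$ in later components one argues that $a$ is not even a vertex of those components, so there is nothing to prove, while edges at $b$ are symmetric. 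A secondary technical point is ensuring the stated stress $\lambda$ on $G(Q)\cup e$ is genuinely \emph{unique} with $\lambda_e=1$: this follows because $Q$, being a connected sum of prime $3$-polytopes, still has $g_2(Q)=0$ so $\Stress_2(G(Q),p)=0$ by Dehn's lemma, and adding a single missing edge raises the stress dimension by exactly one, as recalled before Theorem~\ref{thm:rigidity of polytopes}.
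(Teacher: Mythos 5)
Your proposal diverges from the paper's proof in an important way, and the divergence introduces a genuine gap.

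The paper does \emph{not} reduce to the prime case. It works directly with $Q$ and uses a global combinatorial Gauss--Bonnet / index argument (this is the actual content of Lee's Theorem~6.17): one builds an auxiliary simplicial $2$-sphere $S$ carrying the nonzero part of $\lambda$, labels each corner of each triangle of $S$ with $0$, $1/2$, or $1$ according to sign changes of $\lambda$ across the two incident edges, and compares the sum over triangles with the sum over vertices. The inequality $m_v\geq 4$ for $v\neq a,b$ (from the balancing condition at $v$) together with $n_F\leq 2$ and an Euler count forces equality everywhere, in particular $m_a=m_b=0$, meaning \emph{zero} sign changes around $a$; combined with the observation that at least one edge at $a$ must have negative weight, this gives the claim. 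Primeness at $a$ is used only at the very end, to identify $\lk_S(a)$ with $\lk_{\partial Q}(a)$ via the induced-cycle property.

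Your proposal replaces that global counting with a purely local argument at $a$: you claim that because all vectors $p(a)-p(v)$ (for boundary neighbors $v$) and $p(a)-p(b)$ lie in an open half-space, the balancing relation $\sum_v \lambda_{av}\,(p(a)-p(v))=-(p(a)-p(b))$ with the coefficient $+1$ on the chord forces every $\lambda_{av}<0$. This is false in general: the balancing condition at $a$ is a single vector equation in $\R^3$, so when $\deg(a)\geq 4$ it does not determine the coefficients $(\lambda_{av})_v$ -- there is a $(\deg(a)-3)$-dimensional affine space of local solutions, many of which have mixed signs, and the convexity of the cone at $a$ only tells you that \emph{some} choice of all-negative coefficients exists, not that the one coming from the global stress $\lambda$ is that choice. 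You flag this step as the place where ``Lee's Theorem~6.17 do[es] the work,'' but the theorem is not a local convexity fact; it is precisely the global sign-change count that you would need to carry out, and which your sketch does not supply. The reduction to $k=1$ via ``additivity across the shared triangles'' is also not worked out, and the paper avoids it entirely, but that is a secondary point: even in the prime case your local argument at $a$ is insufficient.
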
 
	\proof
By definition of $Q$, there is only one prime component of $Q$ that has $p(a)$ as its vertex. This means that $a$ does not belong to any missing $2$-face of $\partial Q$, and so $\lk_{\partial Q}(a)$ is an {\em induced} cycle in $\partial Q$. Also, since $\lambda_e=1$, the balancing condition at $a$ (see eq.~\eqref{eq:balancing}) guarantees that at least some of the edges incident with $a$ are assigned negative weights by $\lambda$.
	
	Label each edge of $Q$ with $+,-,0$ depending on whether its $\lambda$-weight  is $>0, <0$ or $0$. Delete every vertex of $Q$ that is adjacent only to edges labeled with $0$. The convex hull of the remaining vertices is $3$-dimensional, but it may have non-triangular $2$-faces; we arbitrarily triangulate all such faces (by adding diagonals) and label all new edges with $0$. The boundary of the resulting object is a simplicial $2$-sphere which we denote by $S$, and $\lambda$ (with added zeros) induces an affine $2$-stress $\lambda'$ on $(S\cup e, p)$. By construction, every vertex of $S$ participates in $\lambda'$; furthermore, every edge with a non-zero $\lambda'$-weight is an edge of $G(Q)\cup e$.
		
Let $F$ be a $2$-face of $S$. In each corner $v$ of $F$, place the label $0$ if the two edges containing $v$ are of the same sign, $1$ if they are of opposite sign, and $1/2$ if one is zero and the other one is non-zero. Let $n_F$ be the sum of corner labels around $F$. Also, for each vertex $v$ of $S$, let $m_v$ be the sum of corner labels at $v$. Then $n_F\leq 2$ for every $2$-face $F$ of $S$, and $m_v\geq 4$ for every vertex $v\neq a, b$ of $S$ (this is an easy consequence of the balancing condition at $v$; see \cite[Claim 1 in the proof of Theorem 6.17]{Lee-notes}). Hence
		$$4(f_0(S)-2)\leq \sum_{v\in V(S)} m_v=\sum_F n_F\leq 2(2f_0(S)-4).$$
		Thus all inequalities in this equation must hold as equalities. We conclude that $m_v=4$ for all $v\in V(S)\backslash\{a,b\}$, $n_F=2$ for all $2$-faces $F$ of $S$, and $m_a=m_b=0$.

			Since $m_a=0$ and since some edges incident with $a$ have negative signs, it follows that $\lambda'_{e'}<0$ for {\em all} edges $e'$ of $S$ incident with $a$. (If at least one edge were labeled with $+$ or $0$, the value of $m_a$ would be at least $1/2$.) Thus, every edge of $S$ incident with $a$ is also an edge of $\partial Q$.
			
			Let $G=\{a,x,y\}$ be a $2$-face of $S$. Since $n_{G}=2$ and $\lambda'_{ax}, \lambda'_{ay}<0$, we obtain that $\lambda'_{xy}>0$. Consequently, $xy$ is also an edge of $\partial Q$. In other words, all vertices of $\lk_S(a)$ are vertices of $\lk_{\partial Q}(a)$ and all edges of $\lk_S(a)$ are edges of $\partial Q$. Since $\lk_{\partial Q}(a)$ is an induced cycle in $\partial Q$, $\lk_S(a)$ must be a subcomplex of $\lk_{\partial Q}(a)$. Finally, since both of them are cycles, we must have that $\lk_S(a)=\lk_{\partial Q}(a)$. Thus, the set of edges of $\partial Q$ incident with $a$ coincides with the set of edges of $S$ incident with $a$. Our result follows since all edges of $S$ incident with $a$ have negative signs. A similar argument applies to $b$.
			\endproof
			
	The proof of Theorem \ref{thm:edges in stresses} relies on Lemma \ref{stress on 3-polytope union an edge} and on two results from  rigidity theory, known as the gluing and cone lemmas (see, for instance, 	\cite[Theorem 2]{AsimowRothII} and \cite[Corollary 6.12]{Lee-notes} for the gluing lemma, and \cite[Claim 1 of Theorem 6.19]{Lee-notes} and also \cite[Lemma 3.2]{NZ-reconstruction} for the cone lemma). 
			
\begin{lemma}[The Gluing Lemma]
    	Let $(G,p)$ be a $d$-framework, and let $G'$ and $G''$ be subgraphs of $G$. If $(G',p)$ and $(G'',p)$ are infinitesimally rigid and have $d$ affinely independent vertices in common (i.e., the framework $(G'\cap G'', p)$ affinely spans a subspace of dimension at least $d-1$), then the union $(G'\cup G'',p)$ is also infinitesimally rigid.
    \end{lemma}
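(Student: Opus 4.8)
The plan is to give the standard short argument for this classical statement (it is the Asimow--Roth/Whiteley gluing lemma referenced just above), phrased in the language of infinitesimal motions rather than stresses, since motions restrict cleanly to subframeworks. Recall the dictionary I would invoke: for a $d$-framework $(H,q)$, an \emph{infinitesimal motion} is a map $m\colon V(H)\to\R^d$ with $\langle q(u)-q(v),\,m(u)-m(v)\rangle=0$ for every edge $uv$ of $H$; the \emph{trivial} ones are the restrictions to $V(H)$ of the affine vector fields $x\mapsto Sx+t$ with $S$ a skew-symmetric $d\times d$ matrix, and since $q(V(H))$ affinely spans $\R^d$ these form a space of dimension exactly $\binom d2+d=\binom{d+1}2$. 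By rank--nullity applied to the rigidity matrix, the condition $\dim\Stress_2(H,q)=f_1(H)-df_0(H)+\binom{d+1}2$ (the paper's definition of infinitesimal rigidity) is equivalent to the assertion that every infinitesimal motion of $(H,q)$ is trivial; this is the translation I would use throughout.

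First I would restrict. Let $m$ be any infinitesimal motion of $(G'\cup G'',p)$. Its restriction to $V(G')$ is an infinitesimal motion of $(G',p)$, which is infinitesimally rigid, so there are a skew-symmetric $S'$ and a vector $t'$ with $m(v)=S'p(v)+t'$ for all $v\in V(G')$; similarly $m(v)=S''p(v)+t''$ on $V(G'')$. On the common vertex set these two affine expressions agree, so writing $C=S'-S''$ (still skew-symmetric) and $t_0=t'-t''$ we obtain $Cp(v)+t_0=\mathbf 0$ for every $v\in V(G')\cap V(G'')$.

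Next comes the algebraic punchline. By hypothesis the common vertices contain $d$ affinely independent points $p(v_1),\dots,p(v_d)$ (a finite point set affinely spanning a flat of dimension $\ge d-1$ contains $\ge d$ affinely independent members). Subtracting the relation at $v_1$ from those at $v_2,\dots,v_d$ gives $C\bigl(p(v_j)-p(v_1)\bigr)=\mathbf 0$ for $j=2,\dots,d$, and the $d-1$ vectors $p(v_j)-p(v_1)$ are linearly independent, so $\operatorname{rank}C\le 1$. Since a real skew-symmetric matrix has even rank, $C=0$, and then $t_0=-Cp(v_1)=\mathbf 0$. Hence $S'=S''$ and $t'=t''$, so the two affine formulas patch into a single trivial motion $m(v)=S'p(v)+t'$ on all of $V(G'\cup G'')$. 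Thus every infinitesimal motion of $(G'\cup G'',p)$ is trivial, and since $(G',p)$ already affinely spans $\R^d$ so does $(G'\cup G'',p)$; therefore $(G'\cup G'',p)$ is infinitesimally rigid.

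I do not expect a genuine obstacle here; the one point that really uses the hypothesis is the last step, where two \emph{arbitrary} affine maps of $\R^d$ agreeing only on a hyperplane's worth of points could still differ, so it is essential that their linear parts are skew-symmetric and that the even-rank property upgrades ``$\operatorname{rank}C\le 1$'' to ``$C=0$''. (An equivalent but lengthier route dualizes this argument through the rigidity matrix, using that a stress on $G'\cup G''$ restricts to a stress on $G'$ and on $G''$; the motion version above is the cleanest.)
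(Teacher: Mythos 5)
The paper does not prove the Gluing Lemma; it quotes it as a standard fact from rigidity theory and points to \cite[Theorem 2]{AsimowRothII} and \cite[Corollary 6.12]{Lee-notes}. So there is no in-paper proof to compare against, and what you wrote is essentially the proof one finds in those references.

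Your argument is correct. You first translate the paper's stress-count definition of infinitesimal rigidity into the motion-space formulation (all infinitesimal motions are trivial), which is legitimate by the rank--nullity identity $\dim\Stress_2 - f_1 = \dim\ker R - d f_0$ for the rigidity matrix $R$, together with the fact that for a $d$-framework the trivial motions form a space of dimension exactly $\binom{d+1}{2}$. Restricting a motion of $G'\cup G''$ to $G'$ and $G''$, you get two affine vector fields $v\mapsto S'p(v)+t'$ and $v\mapsto S''p(v)+t''$ agreeing on the common vertices; subtracting relations at $d$ affinely independent common points kills the linear part on a $(d-1)$-dimensional subspace, so $\operatorname{rank}(S'-S'')\le 1$, and the even-rank property of real skew-symmetric matrices forces $S'=S''$ and then $t'=t''$. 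That final step is exactly where the hypotheses are used, as you note, and the conclusion that $(G'\cup G'',p)$ is a $d$-framework follows because it contains $V(G')$. No gaps.
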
		
			
	Let $G=(V,E)$ be a graph, and let $u\notin V$ be a new vertex. The {\em cone} over $G$ is the graph $u*G=(V\cup u, E\cup\{uv : v\in V\})$. For instance, if $\Delta$ is a simplicial complex and $u$ is a vertex of $\Delta$, then the graph of $\st_\Delta(u)$ is the cone over the graph of $\lk_\Delta(u)$.
		
	\begin{lemma}[The Cone Lemma] 
	Let $(u*G,p)$ be a framework in $\R^d$, let $H$ be a hyperplane not containing $p(u)$, and let $\pi$ be the central projection from $p(u)$ onto $H$. Assume also that for all $v\in V(G)$, $p(u)-p(v)$ is neither zero nor parallel to $H$. Then $(u*G,p)$ is infinitesimally rigid in $\R^d$ if and only if $(G,\pi\circ p)$ is infinitesimally rigid in $H\cong \R^{d-1}$. Furthermore, there is an isomorphism $\varphi: \Stress_2(G, \pi\circ p) \to \Stress_2(u*G,p)$ with the following property: for all $\lambda\in \Stress_2(G, \pi\circ p)$ and $e\in E(G)$,  $\lambda_e\neq 0$ if and only if $(\varphi(\lambda))_e\neq 0$. 
	\end{lemma}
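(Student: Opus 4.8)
The plan is to write down the isomorphism $\varphi$ explicitly and then read off the rigidity statement from a dimension count. By Lee's theorem quoted above, a $2$-stress on a framework $(K,p)$ is determined by its squarefree part $(\omega_{e})_{e\in E(K)}$, an assignment of real weights to edges for which the balancing condition $\sum_{w:\,vw\in E(K)}\omega_{vw}\bigl(p(v)-p(w)\bigr)=\mathbf 0$ holds at every vertex $v$; since affine transformations of $\R^{d}$ preserve this condition verbatim, I first translate so that $p(u)=\mathbf 0$. Write $H=\{x\in\R^{d}:\langle a,x\rangle=1\}$ for a suitable $a$ (possible as $p(u)=\mathbf 0\notin H$); the hypotheses then say exactly that $c_{v}:=\langle a,p(v)\rangle\neq 0$ for every $v\in V(G)$, and the central projection sends $p(v)$ to $q(v):=(\pi\circ p)(v)=p(v)/c_{v}\in H$. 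A short argument shows that the $q(v)$ affinely span $H$ (they linearly span $\R^{d}$, since the $p(v)$ do, and they lie in an affine hyperplane missing the origin), so $(G,\pi\circ p)$ is a genuine $(d-1)$-framework and the assertion \emph{infinitesimally rigid in $H\cong\R^{d-1}$} makes sense. Since $q(v)-q(w)$ is automatically parallel to $H$, a vector $\bar\omega\in\R^{E(G)}$ is a $2$-stress of $(G,\pi\circ p)$ precisely when $\sum_{w}\bar\omega_{vw}\bigl(q(v)-q(w)\bigr)=\mathbf 0$ in $\R^{d}$ for every $v\in V(G)$.

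I would then define the linear map $\varphi\colon \Stress_{2}(G,\pi\circ p)\to\Stress_{2}(u*G,p)$ on squarefree parts by
\[
\varphi(\bar\omega)_{vw}=\frac{\bar\omega_{vw}}{c_{v}c_{w}}\quad(vw\in E(G)),\qquad
\varphi(\bar\omega)_{uv}=\frac{1}{c_{v}}\sum_{w:\,vw\in E(G)}\bar\omega_{vw}\Bigl(\frac{1}{c_{v}}-\frac{1}{c_{w}}\Bigr)\quad(v\in V(G)).
\]
To see that $\varphi(\bar\omega)$ is a genuine $2$-stress it suffices to check balancing at each $v\in V(G)$: substituting $p(v)=c_{v}q(v)$ and using $\sum_{w}\bar\omega_{vw}q(w)=q(v)\sum_{w}\bar\omega_{vw}$ (the balancing of $\bar\omega$ at $v$), the condition at $v$ becomes exactly the equation defining $\varphi(\bar\omega)_{uv}$; balancing at $u$ is then automatic, because the sum of the balancing vectors over all vertices of any framework equals $\mathbf 0$. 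Injectivity of $\varphi$ is clear from the formula for $\varphi(\bar\omega)_{vw}$.

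For surjectivity I would take an arbitrary $\omega'\in\Stress_{2}(u*G,p)$ and pair its balancing equation at $v\in V(G)$ with the functional $a$; since $\langle a,p(u)\rangle=0$, this gives the identity $c_{v}\,\omega'_{uv}=\sum_{w}(c_{w}-c_{v})\,\omega'_{vw}$. Using this identity, one checks that $\bar\omega_{vw}:=c_{v}c_{w}\,\omega'_{vw}$ ($vw\in E(G)$) defines an element of $\Stress_{2}(G,\pi\circ p)$ and that $\varphi(\bar\omega)=\omega'$, so $\varphi$ is an isomorphism. The support property is now immediate: for $e=vw\in E(G)$ we have $\varphi(\bar\omega)_{e}=\bar\omega_{e}/(c_{v}c_{w})$ with $c_{v}c_{w}\neq 0$, hence $\bar\omega_{e}\neq 0$ if and only if $\varphi(\bar\omega)_{e}\neq 0$.

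Finally, the equivalence of infinitesimal rigidity follows by counting: from $f_{0}(u*G)=f_{0}(G)+1$ and $f_{1}(u*G)=f_{1}(G)+f_{0}(G)$ one gets $f_{1}(u*G)-d\,f_{0}(u*G)+\binom{d+1}{2}=f_{1}(G)-(d-1)f_{0}(G)+\binom{d}{2}$, so the trivial lower bound for $\dim\Stress_{2}$ of a $d$-framework on $u*G$ coincides with that for a $(d-1)$-framework on $G$; since $\varphi$ yields $\dim\Stress_{2}(u*G,p)=\dim\Stress_{2}(G,\pi\circ p)$, one side attains its lower bound exactly when the other does. The step I expect to require the most care is pinning down the cone-edge weights $\varphi(\bar\omega)_{uv}$ so that balancing holds at every $v\in V(G)$, together with the dual verification that $\omega'\mapsto(c_{v}c_{w}\omega'_{vw})$ is a two-sided inverse; both amount to the same short computation organized around the identity obtained by pairing a balancing equation with $a$. (The end result is, in effect, a quantitative version of Whiteley's coning theorem.)
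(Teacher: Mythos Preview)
Your argument is correct. The paper itself does not prove the Cone Lemma; it merely cites \cite[Claim 1 of Theorem 6.19]{Lee-notes} and \cite[Lemma 3.2]{NZ-reconstruction}. What you have written is essentially the standard explicit construction behind those references: after translating $p(u)$ to the origin and writing $H=\{x:\langle a,x\rangle=1\}$, the map $\bar\omega_{vw}\mapsto \bar\omega_{vw}/(c_vc_w)$ on the $G$-edges, together with the uniquely determined cone-edge weights, gives the desired bijection of stress spaces, and the rigidity equivalence then follows from the dimension identity $f_1(u*G)-d\,f_0(u*G)+\binom{d+1}{2}=f_1(G)-(d-1)f_0(G)+\binom{d}{2}$. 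Your verification of the balancing conditions, of surjectivity via the pairing with $a$, and of the affine spanning of $H$ by the projected points are all sound; there is nothing to add.
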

	
	One immediate but very useful consequence of Theorem \ref{thm:rigidity of polytopes} and the Cone Lemma is that stars of faces of simplicial polytopes are infinitesimally rigid. In more detail, if $d\geq 4$, $P$ is a simplicial $d$-polytope, and $\tau\in\partial P$ is a non-empty face of dimension $\leq d-4$, then the framework $(G(\st_{\partial P}(\tau)),p)$ is infinitesimally rigid. 
	
	We are now ready to prove Theorem \ref{thm:edges in stresses}. The proof is by induction on $d$. The following lemma takes care of the base case.
	
	\begin{lemma}\label{lm: 4-dimensional case}
		Let $\partial P$ be the boundary complex of a prime simplicial $4$-polytope $P$ with its natural embedding $p$. If $P$ is not a simplex, then every edge of $P$ participates in an affine $2$-stress on $(G(P), p)$. 
	\end{lemma}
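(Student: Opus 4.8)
\textbf{Proof plan for Lemma \ref{lm: 4-dimensional case}.}
The plan is to reduce the $4$-dimensional statement to the $3$-dimensional situation handled by Lemma \ref{stress on 3-polytope union an edge}, via the vertex figures of $P$ and the Cone Lemma. Let $e$ be an edge of $P$. There are two cases. If $e$ is an edge of $\partial P$ contained in a vertex $v$, then it suffices to produce an affine $2$-stress on $(G(\st_{\partial P}(v)),p)$ that is non-zero on $e$, since any such stress extends by zeros to a stress on $(G(P),p)$ (the extra edges of $P$ simply get weight $0$, and the balancing conditions are unaffected). By the Cone Lemma applied to $u=v$ and the central projection $\pi$ from $p(v)$, it is enough to find an affine $2$-stress on $(G(\lk_{\partial P}(v)),\pi\circ p)$ that is non-zero on the image of $e$; but $(\lk_{\partial P}(v),\pi\circ p)$ is the boundary complex of the vertex figure $P/v$, a simplicial $3$-polytope, with $\pi(e)$ one of its edges, so Dehn's lemma does not suffice and we must instead exploit the genuinely $3$-dimensional input.

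Here is where the hypotheses that $P$ is prime and is not a simplex enter. Since $P$ is prime, for every vertex $v$ the vertex figure $P/v = \lk_{\partial P}(v)$ is itself a simplicial $3$-polytope, and because $P$ is not a simplex, at least one vertex figure is not a tetrahedron. If $P/v$ happens to have an edge that is not an edge of $G(P)$ — equivalently, a missing edge of $\lk_{\partial P}(v)$ — then I would pick such a missing edge $e'=a'b'$, form the (unique, by Dehn's lemma) affine $2$-stress $\lambda'$ on $(G(P/v)\cup e',\pi\circ p)$ normalized by $\lambda'_{e'}=1$, and invoke Lemma \ref{stress on 3-polytope union an edge}: it tells me that every edge of $P/v$ incident to $a'$ or $b'$ carries a \emph{strictly negative} (hence non-zero) weight under $\lambda'$. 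The hard part will be to organize a choice of vertex figure and missing edge so that, ranging over all vertices $v$ and all such missing edges, every edge of $G(P)$ ends up incident to one of the chosen $a',b'$ inside some $P/v$; together with the Cone Lemma and extension-by-zero, this produces for each edge $e$ of $P$ an affine $2$-stress on $(G(P),p)$ non-vanishing on $e$. One must also handle the ``exceptional'' vertex figures that are tetrahedra: if $P/v$ is a tetrahedron, all four of its vertices are joined in $G(P)$ and the star of $v$ contributes no information, so these $v$ must be avoided as cone points; primeness and the non-simplex assumption guarantee that every edge of $P$ lies in the star of at least one vertex whose figure is not a tetrahedron (indeed, if $\lk(u)$ and $\lk(w)$ were both tetrahedra for an edge $uw$, one quickly derives that $P$ is a simplex).

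The remaining subtlety is what to do when, for the vertex $v$ of interest, the vertex figure $P/v$ has \emph{no} missing edge at all — i.e., $\lk_{\partial P}(v)$ is $2$-neighborly, so $G(\lk_{\partial P}(v))$ is a complete graph, which for a simplicial $3$-polytope forces $\lk_{\partial P}(v)$ to be the boundary of a tetrahedron or of the octahedron, etc. In the former sub-case we are in the excluded exceptional situation above; in the latter sub-cases one still has $g_2(P/v)=f_1-3f_0+6>0$, so $(G(P/v),\pi\circ p)$ supports a non-trivial affine $2$-stress, and I would argue that since the graph of $P/v$ already equals $G(\lk_{\partial P}(v))$ and $v$ is in its own star, a non-trivial stress on the vertex figure that is non-zero on the relevant edge lifts via the Cone Lemma directly. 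In fact the cleanest route is: for the edge $e=vw$ of $P$, choose the cone point to be $v$; then $\pi(e)=\pi(w)$ is a vertex of $P/v$, not an edge, so instead I would choose the cone point to be a vertex $v$ \emph{not on} $e$ but with $e\in\st_{\partial P}(v)$, making $\pi(e)$ a genuine edge of $P/v$, then apply the above dichotomy (missing-edge case via Lemma \ref{stress on 3-polytope union an edge}, or $g_2>0$ case directly) to get a stress of $P/v$ non-zero on $\pi(e)$, and finally push forward by $\varphi$ and extend by zero. Verifying that such a $v$ exists for every edge $e$ — again using primeness and that $P$ is not a simplex — is the one point that requires care, and is the main obstacle in the argument.
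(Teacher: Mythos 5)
There are two genuine gaps.

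\textbf{A factual error kills the fallback plan.} You claim that when $\lk_{\partial P}(v)$ is $2$-neighborly and not a tetrahedron (you suggest the octahedron as an example), one has $g_2(P/v)=f_1-3f_0+6>0$ and hence a non-trivial affine $2$-stress on $(G(P/v),\pi\circ p)$. This is false: by Dehn's lemma (stated right after Theorem~\ref{thm:rigidity of polytopes}), every simplicial $3$-polytope has $g_2=0$, so $\Stress_2(G(P/v),\pi\circ p)=0$. Also, the only $2$-neighborly simplicial $3$-polytope is the tetrahedron (complete graph on $n$ vertices plus planarity forces $n\le 4$); the octahedron has $f_1=12<\binom{6}{2}$. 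So the ``$g_2>0$ sub-case'' simply does not exist, and the sub-case that does exist (no missing edge in the relevant vertex figure) is left without an argument.

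\textbf{A missing case, and a missing constraint in the case you do address.} Your whole strategy is to cone over some $v\in\lk_{\partial P}(e)$ and reduce to Lemma~\ref{stress on 3-polytope union an edge} in $P/v$. But Lemma~\ref{stress on 3-polytope union an edge} only produces a stress non-zero on edges \emph{incident with an endpoint of the chosen missing edge} and lying in the union of prime components cut by that missing edge; it says nothing about arbitrary edges of $P/v$. So to make your plan work you must exhibit a vertex $v\in\lk_{\partial P}(e)$ and a missing edge $a'b'$ of $P/v$ such that $\pi(e)$ is incident to $a'$ or $b'$ \emph{and} lies in the relevant subpolytope $Q'$ of $P/v$ --- you acknowledge this as ``the hard part'' but do not carry it out. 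The paper does this by working with a missing edge $cd$ of $\lk_{\partial P}(e)$ itself, using primeness to promote it to a missing $2$-face $\{a,c,d\}$, coning from $d$ so that $ac$ is a missing edge of $P/d$ with $a$ an endpoint, and then running a geometric argument about how the triangle $[q(a),q(b),q(c)]$ meets prime components of $P/d$ to guarantee $ab$ is in the right subpolytope. None of that is in your sketch. Worse, there is an entire case your plan cannot reach this way: when $\lk_{\partial P}(e)$ is an \emph{induced} cycle of $G(P)$, no missing edge of $\lk_{\partial P}(e)$ exists and the coning route above has no natural starting point. The paper handles this case by a completely different mechanism --- Alexander duality shows that the vertices outside $\st_{\partial P}(e)$ induce a connected subgraph, the Gluing Lemma makes the union of their stars infinitesimally rigid, and then adding $e$ to this rigid framework forces a new stress supported on $e$. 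Your proposal contains neither this argument nor any substitute for it.
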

	\begin{proof}
		Let $e=ab$ be an edge of $\partial P$, and consider the link of $e$ in $\partial P$. Assume first that $\lk_{\partial P}(e)$ is an induced cycle of $G(P)$. Then $\lk_{\partial P}(e)$ is an induced subcomplex of $\partial P$ unless it is a cycle of length three and its vertices, which we denote by $x,y,z$, form a $2$-face of $\partial P$. If this were the case, then either one of $\{a,x,y,z\}$, $\{b,x,y,z\}$ would be a missing $3$-face of $\partial P$ or $P$ would be a simplex, neither of which is possible by the assumptions of the theorem. Thus $\lk_{\partial P}(e)$ is an induced subcomplex of $\partial P$, and hence so is the star $\st_{\partial P}(e)$. As the star is contractible, it follows that $W:=V(P)\backslash V(\st_{\partial P}(e))$ is nonempty; furthermore, by the Alexander duality, the subcomplex of $\partial P$ induced by $W$ is acyclic, and so the subgraph of $G(P)$ induced by $W$ is connected. If $u,w \in W$ are adjacent vertices, then $(\st_{\partial P}(u), p)$ and $(\st_{\partial P}(w),p)$ share at least four affinely independent vertices from $(\st_{\partial P} (uw),p)$. The Gluing Lemma then implies that the framework $\big(\cup_{v\in W} G(\st_{\partial P}(v)), p\big )$ is infinitesimally rigid. It contains $a$ and $b$, but not the edge $e=ab$.
	Thus, adding $e$ to this framework creates a new affine $2$-stress with a non-zero weight on $e$.
		
		Assume now that $\lk_{\partial P}(e)$ is not an induced cycle of $\partial P$, and let $cd \in \partial P$ be a missing edge of $\lk_{\partial P}(e)$. Since $\{a,b,c,d\}$ is not a missing facet of $\partial P$ and since $\{a,b,c\}, \{a,b,d\}\in \partial P$, at least one of $\{a,c,d\}$ and $\{b,c,d\}$, say $\{a,c,d\}$, must be a missing 2-face. Then $ab \in \lk_{\partial P}(d)$, $ac$ is a missing edge of $\lk_{\partial P}(d)$, and $bc$ is either an edge or a missing edge. Consider the case where both $ac$ and $bc$ are missing edges. Let $Q=P/d$ be the quotient polytope with its natural embedding $q$. Let $Q'=Q_1\#\cdots \# Q_k\subset Q$, where $Q_1, \dots, Q_k$ are the prime components of $Q$ that have non-empty intersections with the interior of $[q(a),q(c)]$ or  $[q(b),q(c)]$. Note that the relative interior of $[q(a),q(b),q(c)]$ lies in the interior of $Q'$ while $[q(a), q(b)]$ lies on the boundary of $Q'$. This means that $Q_1 \cap [q(a),q(b),q(c)], Q_2\cap [q(a),q(b),q(c)], \dots, Q_k \cap [q(a),q(b),q(c)]$ decompose the triangle $[q(a),q(b),q(c)]$ into polygons; furthermore, the vertices of these polygons consist of $q(a)$, $q(b)$, $q(c)$, and some interior points of $[q(a),q(c)]$ and $[q(b),q(c)]$.  In particular,  there is a polygon containing $[q(a),q(b)]$. This polygon must contain a nonzero (in length) part of $[q(a),q(c)]$ or of $[q(b),q(c)]$. Without loss of generality, assume it contains $[q(a),q(c')]$ with $q(c')\neq q(a)$. Then by Lemma \ref{stress on 3-polytope union an edge}, there is an affine stress on $(G(Q)\cup ac, q)$ with a non-zero weight on $ab$. By the Cone Lemma, this stress corresponds to an affine $2$-stress on $(G(P), p)$ with a non-zero weight on $e=ab$, proving the result. The treatment of the case where $bc$ is an edge of $\lk_{\partial P}(d)$ is very similar.
	\end{proof}

We are now in a position to prove Theorem \ref{thm:edges in stresses}. In fact, we verify the following stronger result.
\begin{theorem} Let $d\geq 4$. Let $\partial P$ be the boundary complex of a simplicial $d$-polytope $P$ with its natural embedding $p$. Let $e$ be an edge of $\partial P$.
	\begin{enumerate}[(1)]
		\item If $P$ is prime and $P$ is not a simplex, then there is an affine 2-stress on $(G(P), p)$ with a non-zero weight on $e$.
		\item If $e$ is not a part of any missing $(d-1)$-face of $\partial P$ and $\lk_{\partial P}(e)$ is not the boundary of a $(d-2)$-simplex, then there is an affine 2-stress on $(G(P), p)$ with a non-zero weight on $e$.
	\end{enumerate}
\end{theorem}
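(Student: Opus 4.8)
The plan is to prove the two-part statement by induction on $d$, with Lemma~\ref{lm: 4-dimensional case} (together with a short reduction for part~(2)) serving as the base case $d=4$. I will use two elementary facts repeatedly: first, an affine $2$-stress on a subframework $(G',p)$ of $(G(P),p)$ extends, by assigning weight zero to every edge not in $G'$, to an affine $2$-stress on $(G(P),p)$, since the balancing condition~\eqref{eq:balancing} at a vertex involves only the edges incident to that vertex; and second, stars of faces of simplicial $d$-polytopes are infinitesimally rigid (Theorem~\ref{thm:rigidity of polytopes} combined with the Cone Lemma). The first step is to note that, in any fixed dimension $d\geq 4$, part~(2) follows from part~(1): if $e$ lies in no missing $(d-1)$-face of $\partial P$, then none of the cuts in the prime decomposition of $P$ cuts $e$, so $e$ lies in a unique prime component $P_0$ and $\lk_{\partial P_0}(e)=\lk_{\partial P}(e)$; as the latter is not the boundary of a $(d-2)$-simplex, $P_0$ is not a simplex, and part~(1) applied to $(P_0,e)$ yields an affine $2$-stress on the subframework $(G(P_0),p)$ of $(G(P),p)$ with a non-zero weight on $e$, which extends by zero. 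In particular, the base case is complete once part~(1) is known for $d=4$, which is Lemma~\ref{lm: 4-dimensional case}; and for the inductive step it suffices to prove part~(1) for $d\geq 5$, assuming both parts in all dimensions $4,\dots,d-1$.

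Fix $d\geq 5$, let $P$ be a prime simplicial $d$-polytope that is not a simplex, and let $e=ab$ be an edge of $\partial P$. \textbf{Case A: $\st_{\partial P}(e)$ is an induced subcomplex of $\partial P$.} This includes the degenerate configuration in which $\lk_{\partial P}(e)$ equals the boundary of a $(d-2)$-simplex $\sigma$, for then the only faces of $\partial P$ contained in $\{a,b\}\cup V(\sigma)$ are the faces of $\st_{\partial P}(e)$. I mimic the first case of Lemma~\ref{lm: 4-dimensional case}: since $\st_{\partial P}(e)$ is contractible, it is a proper subcomplex of the sphere $\partial P$, so $W:=V(P)\setminus V(\st_{\partial P}(e))$ is nonempty; by Alexander duality the induced subcomplex $(\partial P)_W$ is acyclic, hence its graph is connected; iterating the Gluing Lemma along a spanning tree of this graph (consecutive stars share a facet, and hence $d$ affinely independent vertices) produces an infinitesimally rigid framework $\big(\bigcup_{v\in W}G(\st_{\partial P}(v)),p\big)$; this framework contains $a$ and $b$ — each of them has a neighbor in $W$, as one sees by combining the inducedness of $\st_{\partial P}(e)$ with the fact that a sphere is not a cone — but it does not contain the edge $e$, because $\{a,b,v\}\notin\partial P$ for every $v\in W$; adjoining $e$ therefore creates an affine $2$-stress with a non-zero weight on $e$, which we extend by zero to $(G(P),p)$.

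\textbf{Case B: $\st_{\partial P}(e)$ is not an induced subcomplex.} Here, as in the second case of Lemma~\ref{lm: 4-dimensional case}, the failure of inducedness produces a face of $\partial P$ contained in $V(\lk_{\partial P}(e))$ but not lying in $\lk_{\partial P}(e)$; peeling off its vertices one by one, and recording when $a$ or $b$ first falls into a missing face, one locates a vertex $v$ with $\{a,b,v\}\in\partial P$ — so that $e$ is an edge of $\lk_{\partial P}(v)=\partial(P/v)$, the boundary complex of a simplicial $(d-1)$-polytope — such that the pair $(\lk_{\partial P}(v),e)$ meets the hypotheses of part~(1) or part~(2) in dimension $d-1$: concretely, $e$ lies in no missing $(d-2)$-face of $\lk_{\partial P}(v)$, and $\lk_{\lk_{\partial P}(v)}(e)=\lk_{\partial P}(\{a,b,v\})$ is not the boundary of a $(d-3)$-simplex. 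The induction hypothesis then furnishes an affine $2$-stress on $(G(\lk_{\partial P}(v)),\pi\circ p)$ with a non-zero weight on $e$; the Cone Lemma transports this to an affine $2$-stress on $(G(\st_{\partial P}(v)),p)=(v*G(\lk_{\partial P}(v)),p)$ with a non-zero weight on $e$; and extending by zero gives the desired stress on $(G(P),p)$.

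The main obstacle is the vertex selection in Case~B: one must show that among the common neighbors $v$ of $a$ and $b$ there is always one for which $e$ avoids every missing facet of $\lk_{\partial P}(v)$ and $\lk_{\partial P}(\{a,b,v\})$ is not a simplex boundary. Because $P$ is prime, a missing facet of $\lk_{\partial P}(v)$ containing $e$ corresponds to a missing $(d-2)$-face of $\partial P$ through $e$ obeying additional incidence constraints with $v$, so the first requirement reduces to a careful examination of the missing $(d-2)$-faces of $\partial P$ that contain $e$; and the second requirement can fail for all $v$ only if every vertex link of the $(d-3)$-sphere $\lk_{\partial P}(e)$ is a simplex boundary, which would force $\lk_{\partial P}(e)$ itself to be a simplex boundary and so return us to Case~A. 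Making this bookkeeping of the missing faces of $\partial P$ around $e$ precise is where the real work lies; the remaining ingredients — Lemma~\ref{lm: 4-dimensional case}, the Gluing and Cone Lemmas, Theorem~\ref{thm:rigidity of polytopes}, and Alexander duality — are combined exactly as indicated above.
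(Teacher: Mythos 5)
Your overall architecture matches the paper's: induction on $d$, with Lemma~\ref{lm: 4-dimensional case} supplying the base case, a reduction of part~(2) to part~(1) in the same dimension via the prime decomposition, and an inductive step for part~(1) that either uses the Gluing Lemma directly or passes to a vertex figure and invokes the Cone Lemma. Your Case~A — $\st_{\partial P}(e)$ induced, resolved by gluing the stars of vertices outside $V(\st_{\partial P}(e))$ — is sound in outline and corresponds to the paper's ``$Q=P/e$ is a simplex'' subcase (though your version covers more, which is fine); two points there should be argued rather than asserted: that in the ``degenerate'' configuration $\lk_{\partial P}(e)=\partial\sigma$ one really has $\sigma\notin\partial P$ (this uses primality together with $P$ not being a simplex, exactly as in the $d=4$ computation inside Lemma~\ref{lm: 4-dimensional case}), and the ``sphere is not a cone'' step showing $a,b$ have neighbors outside $V(\st_{\partial P}(e))$.

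The genuine gap is in Case~B, and you say so yourself. You want to pick a common neighbor $v$ of $a,b$ so that $e$ lies in no missing facet of $\lk_{\partial P}(v)$ and $\lk_{\partial P}(\{a,b,v\})$ is not a simplex boundary, then apply part~(2) in dimension $d-1$ and lift via the Cone Lemma; but you do not establish that such $v$ exists, and in particular your scheme has no way to absorb the case in which $e$ lies in a missing $(d-2)$-face $F$ of $\partial P$. The paper's key move is precisely to remove that case \emph{first}, by a direct (non-inductive) argument: since all proper subsets of $F$ are faces, $e$ is a missing edge of the $2$-sphere $\lk_{\partial P}(F\setminus e)$, so iterating the Cone Lemma reduces to the graph of a $3$-polytope with one missing edge added, which by Dehn's lemma (or Lemma~\ref{stress on 3-polytope union an edge}) carries a stress nonzero on $e$; extend by zero. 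With that case gone, one assumes $e$ avoids every missing $(d-2)$-face of $\partial P$, and then the ``bookkeeping'' becomes trivial: for \emph{any} $v\in V(\lk_{\partial P}(e))$, a missing facet of $\lk_{\partial P}(v)$ containing $e$ would, by primality of $P$, force a missing $(d-2)$-face of $\partial P$ through $e$ (adjoining $v$ would otherwise produce a missing facet of $\partial P$), contradicting the cleared assumption. It only remains to choose $v$ with $\lk_{\partial P}(\{a,b,v\})$ not a simplex boundary, which is possible exactly when $Q=P/e$ is not a simplex; if $Q$ is a simplex, one shows $\st_{\partial P}(e)$ is induced and falls back on the Gluing-Lemma argument. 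So the missing idea is the separate Cone-Lemma treatment of the ``$e$ in a missing $(d-2)$-face'' case; without it, the vertex selection you sketch cannot be completed.
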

\begin{proof}
	We prove both statements simultaneously by induction on $d$. The base case $d=4$ of statement (1) is verified by Lemma \ref{lm: 4-dimensional case}. Assume that (1) holds for some $d\geq 4$. Our strategy is to prove that then (2) holds for the same $d$, and then use this to show that (1) holds for $d+1$.
	
	To prove that (2) holds for $d$, let $P=P_1\#\cdots \#P_k$ be the prime decomposition of $P$. If $e$ is not a part of a missing facet of $\partial P$, then there is a unique prime component $P_i$ that contains $e$, and so $\lk_{\partial P}(e)=\lk_{\partial P_i}(e)$. That $\lk_{\partial P}(e)$ is not the boundary of a $(d-2)$-simplex implies that $P_i$ is a prime $d$-polytope that is not a simplex. Hence by (1) (in dimension $d$), a desired stress does exist.
	
	To prove that (1) holds for $d+1$, assume $P$ is a prime simplicial $(d+1)$-polytope that is not a simplex, and let $e$ be an edge of $P$. If $e$ is a part of a missing $(d-1)$-face $F$, then $e$ is a missing edge of  the $2$-sphere $\lk_{\partial P}(F \backslash e)$. By the Cone Lemma,  $(G(\st_{\partial P}(F \backslash e)) \cup e, p)$ is infinitesimally rigid and there is an affine $2$-stress $\lambda$ with $\lambda_e\neq 0$. We are done in this case. 
	
	Now assume that $e$ is not a part of any missing $(d-1)$-face of $\partial P$. Consider the quotient polytope $Q=P/e$. There are two possible cases:  1) $Q$ is not a simplex, and 2) $Q$ is a simplex. In the former case, there exists a vertex $v$ of $\partial Q=\lk_{\partial P}(e)$ such that $\lk_{\partial Q}(v)=\lk_{\partial P}(e \cup v)$ is not the boundary complex of a $(d-2)$-simplex. In other words, $\lk_{\partial(P/v)}(e)$ is not the boundary complex of a $(d-2)$-simplex. Furthermore, $e$ is not a part of a missing $(d-1)$-face of $\partial (P/v)$. 
	(This is because any missing $(d-1)$-face $F$ of $\partial (P/v)$ must be a missing face of $\partial P$ or else $v\cup F$ would be a missing facet of $\partial P$. Hence if $e$ were a part of a missing $(d-1)$-face $F$ of $\partial (P/v)$, then $e$ would be a part of a missing $(d-1)$-face of $P$, contradicting our assumptions on $e$.)
	Thus by statement (2) (in dimension $d$) applied to $T=P/v$ with its natural embedding $t$, there is an affine stress $\lambda$ on $\big(G(\lk_{\partial P}(v)), t\big)$ such that $\lambda_e\neq 0$. By the Cone Lemma, this stress corresponds to a stress on $(G(\st_{\partial P}(v)), p)$ with a non-zero weight on $e$, which completes the proof in this case.
	
	In the case that $Q=P/e$ is a simplex, the same argument as in the first part of the proof of Lemma \ref{lm: 4-dimensional case} shows that both $\lk_{\partial P}(e)$ and $\st_{\partial P}(e)$ are induced subcomplexes of $\partial P$ and that $e$ participates in some stress on $(G(P), p)$, thus completing the proof in this case.
\end{proof}

Theorem \ref{thm:edges in stresses} and the main result of \cite{Z-rigidity} prove the $i=2$ case of the following conjecture that was originally proposed in \cite[Conjecture 3.3]{NZ-AffReconstr}. All other cases of this conjecture remain wide open.

\begin{conjecture}
Let $2\leq i\leq d/2$ and let $\Delta$ be either the boundary complex of a simplicial $d$-polytope with its natural embedding $p$ or a simplicial $(d-1)$-sphere with a generic embedding $p$. Assume further that $\Delta$ has no missing faces of dimension $\geq d-i+1$. Then every $(i-1)$-face of $\Delta$ participates in some affine $i$-stress on $(\Delta,p)$. In particular, the space of affine $i$-stresses of $(\Delta, p)$ determines the $(i-1)$-skeleton of $\Delta$.
\end{conjecture}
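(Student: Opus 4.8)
Write $J=I_\Delta+(\Theta(p),\ell)$. For an $(i-1)$-face $\sigma$, the coefficient of the squarefree monomial $x^\sigma$ in an $i$-stress $\lambda$ equals $x^\sigma\circ\lambda$; since $\Stress_i(\Delta,p)\cong J^\perp_{-i}$ and the induced pairing $(R/J)_i\times J^\perp_{-i}\to\R$ is nonsingular, $\sigma$ participates in some $i$-stress if and only if the image of $x^\sigma$ in $R/\big(I_\Delta+(\Theta(p),\ell)\big)$ is nonzero. Thus the first assertion of the conjecture is equivalent to saying that the degree-$i$ squarefree monomials contained in $I_\Delta+(\Theta(p),\ell)$ are exactly those contained in $I_\Delta$ --- the degree-$i$ analogue of the remark that follows Theorem~\ref{thm:edges in stresses}. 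Granting this, the ``in particular'' clause is immediate: every monomial occurring in an $i$-stress is supported on a face of $\Delta$, so the supports of the squarefree degree-$i$ monomials that occur with nonzero coefficient in elements of $\Stress_i(\Delta,p)$ are exactly the $(i-1)$-faces, which determine $\skel_{i-1}(\Delta)$.

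\textbf{Induction on $d$.} I would prove the equivalent statement by induction on $d\geq 2i$, imitating the proof of Theorem~\ref{thm:edges in stresses}; as there, it is probably cleanest to formulate and prove simultaneously a two-part version (one clause for ``prime'' situations, one local clause about a fixed $(i-1)$-face $\sigma$ whose link is not the boundary of a simplex and which lies in no large missing face), so that the inductive hypothesis survives passage to vertex links. The step $d>2i$ requires the higher analogues of the two tools of Section~5. A \emph{cone lemma} for $i$-stresses: for a vertex $v$, the identity $(x_v)(R/I_\Delta)\cong\big(R/I_{\st_\Delta(v)}\big)(-1)$ together with $\st_\Delta(v)=v\ast\lk_\Delta(v)$ should, upon descending modulo $\Theta(p)$ and $\ell$ --- the point where one uses that $p$ is natural or generic, so that the relevant linear forms remain a parameter system and a Lefschetz element on $\lk_\Delta(v)$ --- give a participation-preserving isomorphism $\Stress_i(\lk_\Delta(v),\pi\circ p)\cong\Stress_i(\st_\Delta(v),p)$. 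A \emph{gluing lemma} for $i$-stresses, valid for $i$-rigid subframeworks sharing $d$ affinely independent vertices, is part of the Tay--White--Whiteley/Lee theory. Given an $(i-1)$-face $\sigma$, one picks a vertex $v\in\lk_\Delta(\sigma)$ (which exists, as $\lk_\Delta(\sigma)$ is a $(d-i-1)$-sphere with $d-i-1\geq i-1\geq 1$), so that $\sigma$ becomes an $(i-1)$-face of the $(d-2)$-sphere $\lk_\Delta(v)$, for which $i\leq(d-1)/2$; after adjusting the choice of $v$ if necessary to preserve the local hypotheses (as is done in Section~5), the inductive hypothesis yields an $i$-stress on $\lk_\Delta(v)$ with nonzero weight on a face containing $\sigma$, which the cone lemma transports back, while the gluing lemma and Alexander duality (the complement of a star has connected $1$-skeleton) dispose of the remaining case. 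For $d>2i$ one also has the alternative of combining Theorem~\ref{thm: Stress_i to Stress_j} (applicable since then $i\leq\lceil d/2\rceil-1$) with the conjecture for $i-1$; the catch is that differentiating a lower-degree stress shows only that \emph{some} face containing $\sigma\setminus v$ participates, so this route would need a more careful choice of the stress being differentiated.

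\textbf{The base case $d=2i$: the main obstacle.} When $d=2i$ we have $i=\lceil d/2\rceil$, $\dim_\R\Stress_i(\Delta,p)=g_i(\Delta)$ is the top $g$-number, and Theorem~\ref{thm: Stress_i to Stress_j} is unavailable --- by the Remark in Section~3 it can in fact fail. The plan would be to imitate the $d=4$ base case: use that $g_i=0$ for boundary complexes of $(2i-1)$-polytopes (the odd case of Theorem~\ref{num-g-thm}, with $d'=2i-1$ and $\lceil d'/2\rceil=i$), so that such polytopes support no nontrivial $i$-stress; deduce that the $i$-stress putting weight $1$ on a prescribed missing $(i-1)$-face is unique; and then carry out a sign analysis of that unique stress generalizing Lemma~\ref{stress on 3-polytope union an edge}. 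This sign analysis is, I expect, the real difficulty: the proof of Lemma~\ref{stress on 3-polytope union an edge} relies on the surface Euler count ``$n_F\leq 2$, $m_v\geq 4$, $\sum_F n_F=\sum_v m_v$'', which has no obvious analogue in higher dimensions. I would therefore first try to bypass the combinatorics entirely, proving directly via Matlis duality that $x^\sigma$ survives in $R/\big(I_\Delta+(\Theta,\ell)\big)$ by exploiting the very restrictive Hilbert function $(g_0,\dots,g_i,0,\dots)$ of this algebra together with the socle constraints of Proposition~\ref{prop:socle and missing faces} and the link decomposition $(x_v)(R/I_\Delta)\cong(R/I_{\st_\Delta(v)})(-1)$; only if that stalls would I look for a higher-dimensional corner-counting argument.
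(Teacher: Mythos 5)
The statement you address is presented in the paper as an \emph{open conjecture} (it is the conjecture closing Section~5, originally from \cite[Conjecture 3.3]{NZ-AffReconstr}). The paper proves only the $i=2$ case --- Theorem~\ref{thm:edges in stresses} handles natural embeddings of polytopes, and \cite{Z-rigidity} handles generic embeddings --- and states explicitly that ``All other cases of this conjecture remain wide open.'' So there is no proof in the paper against which to compare your attempt.

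Your write-up is, accordingly, a sketch rather than a proof, and you say so yourself. The reformulation in your first paragraph is precisely the algebraic restatement given at the very end of Section~5, and your blueprint --- induction on $d$ via higher-$i$ cone and gluing lemmas, with a hard base case at $d=2i$ --- is a reasonable projection of the $i=2$ argument. But each ingredient you list is a genuine gap, not a routine verification. The cone lemma for $i$-stresses is nowhere established in the paper and requires showing that central projection carries $\Theta(p),\ell$ to an l.s.o.p.\ plus Lefschetz element on $\lk_\Delta(v)$ (and, in the generic case, that genericity persists under projection). The gluing lemma for higher $i$-rigidity would need to be located in the Tay--White--Whiteley/Lee literature in the exact form you require. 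The shortcut via Theorem~\ref{thm: Stress_i to Stress_j} has the defect you already name (differentiation shows only that \emph{some} face containing $\sigma\setminus v$ participates, not $\sigma$). And the $d=2i$ base case is the real obstruction: Lemma~\ref{stress on 3-polytope union an edge} rests on a corner-counting/Euler argument intrinsic to surfaces; for $d=2i$ the map $\cdot\ell$ is not injective in degree $i-1$, Proposition~\ref{prop:socle and missing faces} yields only the inequality $\dim_\R\soc^R(R/(I_\Delta+(\Theta,\ell)))_{i-1}\geq m_{i+1}(\Delta)$, and the Remark in Section~3 notes this can be strict for $d=2i\geq 6$ even under the missing-face hypotheses --- so the Hilbert-function and socle constraints you hope to exploit do not by themselves force $x^\sigma$ to survive in $R/(I_\Delta+(\Theta,\ell))$. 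In short, your proposal correctly frames the problem but, like the paper, does not resolve it for $i\geq 3$.
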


Algebraically, this conjecture posits that, under the assumptions of the conjecture, the set of squarefree monomials contained in $(I_\Delta)_i$ is equal to the set of squarefree monomials contained in $\big(I_\Delta+(\Theta(p),\ell)\big)_i$.

\section*{Acknowledgments}  We are grateful to the referee for bringing to our attention reference \cite{GHT} and for several suggestions that improved the presentation.
{\small
	\bibliography{refs}
	\bibliographystyle{plain}
}
	\end{document}